\newtheorem{theorem}{Theorem}[section]
\newtheorem{lemma}[theorem]{Lemma}
\newtheorem{proposition}[theorem]{Proposition}
\newtheorem{corollary}[theorem]{Corollary}
\newtheorem{definition}[theorem]{Definition}
\newtheorem{remark}[theorem]{Remark}
\newcommand\id{\mathop{\rm id}}
\newcommand\tr{\mathop{\rm tr}}
\newcommand{\cl}[1]{\mathcal{#1}}
\newcommand{\ignore}[1]{ { } }
\begin{document}
\author{
  Alexey Kuzmin,
  Lyudmila Turowska
  \\
}
\title{Classification of irrational $\Theta$-deformed CAR $C^*$-algebras}
\maketitle
\begin{abstract}
Given a skew-symmetric real $n\times n$ matrix $\Theta$ we consider the universal enveloping $C^*$-algebra $\mathsf{CAR}_\Theta$ of the  $*$-algebra generated by $a_1, \ldots, a_n$ subject to the relations
\[ a_i^* a_i + a_i a_i^* = 1, \ \]
\[ a_i^* a_j = e^{2 \pi i \Theta_{i,j}} a_j a_i^*, \]
\[ a_i a_j = e^{-2 \pi i \Theta_{i,j}} a_j a_i. \]
We prove that $\mathsf{CAR}_\Theta$ has a $C(K_n)$-structure, where $K_n = \left[ 0,\frac{1}{2} \right]^n$ is the hypercube and describe the fibers. We classify irreducible  representations of $\mathsf{CAR}_\Theta$ in terms of irreducible representations of a higher-dimensional noncommutative torus. We prove that for a given irrational skew-symmetric $\Theta_1$ there are only finitely many $\Theta_2$ such that $\mathsf{CAR}_{\Theta_1} \simeq \mathsf{CAR}_{\Theta_2}$. Namely, $\mathsf{CAR}_{\Theta_1} \simeq \mathsf{CAR}_{\Theta_2}$ implies $(\Theta_1)_{ij} = \pm (\Theta_2)_{\sigma(i,j)} \mod \mathbb{Z}$ for a bijection $\sigma$ of the set $\{(i,j) : i < j, \ i, j = 1, \ldots, n\}$. For $n = 2$ we give a full classification: $\mathsf{CAR}_{\theta_1} \simeq \mathsf{CAR}_{\theta_2}$ iff $\theta_1 = \pm \theta_2 \mod \mathbb{Z}$.
\end{abstract}
\tableofcontents

\section{Introduction}
One of the most well-studied examples of noncommutative manifolds are the noncommutative tori, see \cite{rieff}. Given a real skew-symmetric $n\times n$ matrix $\Theta=(\Theta_{i,j})$ the noncommutative torus $C(\mathbb{T}^n_\Theta)$ is defined as the universal $C^*$-algebra generated by $n$ unitaries $u_1, \ldots, u_n$ subject to the relations
\[ u_i u_j = e^{-2 \pi i \Theta_{i,j}} u_j u_i. \]

The problem of classification of $C(\mathbb{T}^n_\Theta)$ up to $C^*$-isomorphism has been solved  in \cite{phillips}  in the case when $\Theta$ is {\it irrational}. 
In particular, in the case $n = 2$, identifying $\Theta$  with $\Theta_{1,2} = \theta$ we have $C(\mathbb{T}^2_{\theta_1}) \simeq C(\mathbb{T}^2_{\theta_2})$ iff $\theta_1 = \pm \theta_2 \mod \mathbb{Z}$. For rational $\Theta$ the classification is given in \cite{brenken}.

In this paper we study the universal enveloping $C^*$-algebra $\mathsf{CAR}_\Theta$ of the $*$-algebra generated by $a_1, \ldots, a_n$ subject to the relations
\[ a_i^* a_i + a_i a_i^* = 1, \ \]
\[ a_i^* a_j = e^{2 \pi i \Theta_{i,j}} a_j a_i^*, \]
\[ a_i a_j = e^{-2 \pi i \Theta_{i,j}} a_j a_i. \]
The representation theory of $\mathsf{CAR}_\Theta$ was studied in  \cite{proskurin-sukretnyi} and it appeared to be related to representation theory of a noncommutative torus. In this paper we in particular explain and reprove the result by showing that $\mathsf{CAR}_\Theta$ has a $C(K_n)$-structure for $K_n = \left[0,\frac{1}{2}\right]^n$ with fibers being isomorphic to matrix algebras over crossed products of noncommutative tori by  finite groups. The description of $\mathsf{CAR}_\Theta$ as a "noncommutative fiber bundle" allows us to establish a result about classification  $\mathsf{CAR}_\Theta$ up to isomorphism for irrational $\Theta$ which was the main motivation to pursue our study of the object. 

 Noncommutative tori have been playing a role of a training ground for testing various ideas in noncommutative geometry and topology. Such questions as classification up to $C^*$-isomorphism, classification of projective modules, classification up to Morita equivalence, construction of Dirac operators, study of quantum metric structures, construction of pseudodifferential calculi, study of index theory, generalizations of the notion of curvature and many other have been studied for $C(\mathbb{T}^n_\Theta)$. Because of the simplicity of the algebraic definition of $\mathsf{CAR}_\Theta$ and the existence of a noncommutative fiber bundle structure on $\mathsf{CAR}_\Theta$ with the fibers resembling noncommutative tori, it is natural to ask the same questions about its structure as for $C(\mathbb{T}^n_\Theta)$. In this paper we are interested in the noncommutative topology of $\mathsf{CAR}_\Theta$, in particular the classification of $\mathsf{CAR}_\Theta$.
We prove that $\mathsf{CAR}_{\theta_1} \simeq \mathsf{CAR}_{\theta_2}$ for irrational $\theta_1, \theta_2$ and $n = 2$ iff $\theta_1 = \pm \theta_2 \mod \mathbb{Z} $. Moreover for general $n$ and irrational $\Theta_1$, $\Theta_2$ we prove that $\mathsf{CAR}_{\Theta_1} \simeq \mathsf{CAR}_{\Theta_2}$ implies that  $(\Theta_1)_{i,j} = \pm (\Theta_2)_{\sigma(i,j)} \mod \mathbb{Z}$ for a bijection $\sigma$ of the set $\{(i,j) : i < j, \ i, j = 1, \ldots, n\}$. 

The general idea for our analysis of $\mathsf{CAR}_\Theta$ is to express it as Rieffel's deformation of $n$ tensor copies of $\mathsf{CAR}_1$ - the one-dimensional $\mathsf{CAR}$-algebra, structure of which is well-understood: it is a $C(\left[0,\frac{1}{2}\right])$-$C^*$-algebra with well-known fibers. Then we use the fact that Rieffel's deformation of a $C_0(X)$-$C^*$-algebra also has a $C_0(X)$-structure with fibers which are Rieffel's deformations of the fibers of the undeformed $C^*$-algebra. 

The structure of the article is the following: in Section 2 and 3 we recall some relevant facts from the theory of $C_0(X)$-$C^*$-algebras and Rieffel's deformations. In Section 4 we prove an isomorphism between Rieffel's deformation of the matrix algebra over a $C^*$-algebra $A$ and the matrix algebra over Rieffel's deformation of $A$. Although this result has been known in the literature, here we construct an explicit isomorphism, which will be used in further sections. In Section 5 we show that the $C^*$-algebra $\mathsf{CAR}_\Theta$ is isomorphic to Rieffel's deformation of $\mathsf{CAR}_1^{\otimes n}$. In Section 6 we give an analysis of $\mathsf{CAR}_1$ - we describe its representation theory, show that it has a $C(\left[0,\frac{1}{2}\right])$-structure and describe fibers with respect to this structure. In Section 7 we transfer the described structural features of $\mathsf{CAR}_1$ first to $\mathsf{CAR}_1^{\otimes n}$ and then to its Rieffel deformation, that allows to  obtain an alternative proof for classification of irreducible representations of $\mathsf{CAR}_\Theta$ (Theorem \ref{clas_irr}). In Section 8 we further exploit the noncommutative fiber bundle structure of $\mathsf{CAR}_\Theta$ and prove the classification result (Theorem \ref{clas_tori}, Corollary \ref{clas_cor}).

We believe that the $C^*$-algebra $\mathsf{CAR}_{\Theta}$ is a nice rich object to study other questions of noncommutative geometry and this will be pursued elsewhere. 

\section{$C_0(X)$-structure on $C^*$-algebras}
Let $X$ be a locally compact Hausdorff space and let $C_0(X)$ be the $C^*$-algebra of continuous functions on $X$ that vanish at infinity.  For a $C^*$-algebra $A$ write $\mathcal M(A)$ to denote its multiplier algebra; let $Z(A)$ be its center. $C_0(X,A)$ will stand for the algebra of $A$-valued continuous functions on $X$ that vanish at infinity. 

\begin{definition}
A $C_0(X)$-structure on a $C^*$-algebra $A$ is a monomorphism 
\[ \Phi : C_0(X) \rightarrow Z(\mathcal{M}(A)) \]
such that the ideal $\Phi(C_0(X)) \cdot A$ is dense in $A$. In this case we say that $A$ is a  $C_0(X)$-$C^*$-algebra.
\end{definition}

Let $A$ be a $C_0(X)$-$C^*$-algebra. For $x \in X$ consider  the closed ideal 
\[ I_x^\Phi = \overline{\text{span}\{ \Phi(f) \cdot a, \ a \in A, \ f \in C_0(X) \text{ such that } f(x) = 0 \}}. \]
The fiber $A^\Phi(x)$ of $A$ over $x$ is defined as
\[ A^\Phi(x) = A / I_x^\Phi, \]  
and the canonical quotient map ${\rm ev}_x^\Phi : A \rightarrow A^\Phi(x)$ will be  called the evaluation map at $x$. When $C_0(X)$-structure $\Phi$ is evident from the context, we will simply write $I_x$, $A(x)$ and ${\rm ev}_x$; we shall often write $a(x)$ instead of ${\rm ev}_x(a)$; it is also common to suppress mention of $\Phi$ and simply write $f\cdot a$ instead of $\Phi(f)\cdot a$. 

Let $G$ be a locally compact group and $\alpha:G\to \text{Aut}(A)$ be a continuous (with respect to point norm topology) group homomorphism, which we call an action of $G$ on $A$; thus $(A,G,\alpha)$ is a $C^*$-dynamical system. 
\begin{definition}
Let $A$ be a $C_0(X)$-$C^*$-algebra. We say that $\alpha$  is  fiberwise if
\[ \alpha_g( f\cdot a) = f\cdot(\alpha_g(a)), \ g \in G, \ a \in A, \ f \in C_0(X). \]
\end{definition}
If an action  $\alpha$ is fiberwise then it induces an action $\alpha^x$ of $G$ on $A(x)$ for every $x \in X$ by letting $\alpha_g^x(a(x))=\alpha_g(a)(x)$, $a\in A$, $g\in G$.

\section{Rieffel deformation}\label{Rieffeldef}
We turn now to Rieffel's deformation \cite{rieffel_primar}, that will be essential for our consideration, and recall main constructions needed for the paper. 

Given a $C^*$-dynamical system $(A,\mathbb R^n,\alpha)$, let $A^\infty$ denote the set of all $a \in A$ such that $t \mapsto \alpha_t(a)$ is a $C^\infty$-function. It is a dense $*$-subalgebra of $A$. 
Let $\Theta$ be a real skew-symmetric $n\times n$-matrix. To define Rieffel's deformation, one keeps the involution unchanged and introduces on $A^\infty$ the product defined by oscillatory integrals
\begin{equation}\label{rieffel_product}
    a \cdot_\Theta b := 
\int_{\mathbb{R}^n} \int_{\mathbb{R}^n} \alpha_{\Theta(x)}(a) \alpha_y(b) e^{2 \pi i \langle x, y \rangle} dx dy,
\end{equation}
where $\langle x, y \rangle$ is the inner product on $\mathbb{R}^n$. The $*$-algebra $(A^\infty, \cdot_\Theta)$ admits a $C^*$-completion $A^\Theta$ in a $C^*$-norm, defined by Hilbert module techniques. The action $\alpha$ leaves $A^\infty$ invariant and extends to the action $\alpha^\Theta$ on the $C^*$-algebra $A^\Theta$. 
 More generally, any equivariant $*$-homomorphism $f$ between $C^*$-algebras $A$ and $B$ with actions $\alpha^A$ and $\alpha^B$ of $\mathbb{R}^n$ respectively (i.e. $f(\alpha_x^A(a))=\alpha_x^B(f(a))$, $a\in A$, $x\in\mathbb R^n$) can be lifted to a $*$-homomorphism $f^\Theta : A^\Theta \rightarrow B^\Theta$, which is also equivariant. We refer the reader to \cite{rieffel_primar} for these and other details concerning the construction. Through this section we will keep notation $\alpha$ for the action that defines Rieffel's deformation.

\medskip{}

The procedure of Rieffel's deformation is invertible; the next statement follows from \cite{Kasprzak_rieffel}, Lemma 3.5. 
\begin{proposition}\label{double_deformation}
The identity mapping  extends to a $*$-isomorphism $\text{id} : A \rightarrow (A^\Theta)^{-\Theta}$.
\end{proposition}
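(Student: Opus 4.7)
The plan is to show that the identity map on the common dense $*$-subalgebra of smooth vectors intertwines the undeformed product of $A$ with the twice-deformed product on $(A^\Theta)^{-\Theta}$, and then extend by continuity using the uniqueness of the enveloping $C^*$-norm.

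First I would check that the underlying space of smooth vectors is stable under the deformation procedure, that is $A^\infty = (A^\Theta)^\infty$ as subsets, and that the restriction of $\alpha^\Theta$ to $A^\infty$ coincides with $\alpha$. This is essentially built into Rieffel's construction: $A^\infty$ sits densely inside $A^\Theta$ with the same $\mathbb R^n$-action, and smoothness of $t\mapsto \alpha_t(a)$ is independent of which of the two $C^*$-norms one measures it in. Consequently, on $A^\infty$ the second deformation $\cdot_{-\Theta}$ (computed with respect to $\alpha^\Theta$) is given by the same oscillatory integral formula (\ref{rieffel_product}) but with $\alpha$ in place of $\alpha^\Theta$ and $-\Theta$ in place of $\Theta$, and the inner multiplication being $\cdot_\Theta$.

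The core computation is then to evaluate, for $a,b\in A^\infty$,
\[
a\cdot_{-\Theta}b \;=\; \int\!\!\int \alpha_{-\Theta(x)}(a)\cdot_\Theta \alpha_y(b)\, e^{2\pi i\langle x,y\rangle}\,dx\,dy,
\]
expand the inner $\cdot_\Theta$ as a further double integral, and use the change of variables $u\mapsto u-x$, $v\mapsto v-y$ together with the $\mathbb R^n$-action's additivity $\alpha_{\Theta(u)}\alpha_{-\Theta(x)}=\alpha_{\Theta(u-x)}$. The resulting quadruple oscillatory integral collapses via the Fourier inversion identity $\int\!\!\int e^{2\pi i\langle x,y\rangle}\,dx\,dy = \delta_0$ (twice), forcing $u=x$ and $v=-y$, which kills the $\alpha$'s and returns $ab$ — the original product of $A$. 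The justification of this manipulation is where Rieffel's machinery of oscillatory integrals on $A^\infty$ is needed; this is precisely the content of Lemma~3.5 of \cite{Kasprzak_rieffel}.

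Having established $a\cdot_{-\Theta} b = ab$ and $a^*=a^*$ (the involution is unchanged throughout the procedure) on $A^\infty$, the identity map becomes a $*$-algebra isomorphism from the dense $*$-subalgebra $(A^\infty,\cdot)\subset A$ onto $(A^\infty,\cdot_{-\Theta})\subset (A^\Theta)^{-\Theta}$. Since both $C^*$-norms on $A^\infty$ are determined by its $*$-algebra structure as the enveloping $C^*$-norm (given by Rieffel's Hilbert-module construction applied to the respective algebraic structures, which now agree), they must coincide, and the identity extends to the desired $*$-isomorphism of the $C^*$-completions. The main obstacle is the rigorous evaluation of the quadruple oscillatory integral; once that collapse is granted, everything else is a routine density argument.
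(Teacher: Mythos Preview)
The paper does not actually prove this proposition; it simply records it as a consequence of \cite[Lemma~3.5]{Kasprzak_rieffel} and moves on. So there is no ``paper's own proof'' to compare against beyond that citation.

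Your sketch is the direct oscillatory-integral argument, which is essentially Rieffel's original proof (Theorem~7.5 in \cite{rieffel_primar}) rather than Kasprzak's. Kasprzak's route is genuinely different: he realises $A^\Theta$ via Landstad's theory as a generalised fixed-point algebra inside the crossed product $A\rtimes_\alpha\mathbb{R}^n$, and the involutivity $(A^\Theta)^{-\Theta}\cong A$ then drops out of the symmetry of that picture under $\Theta\mapsto-\Theta$, with no oscillatory integrals in sight. Your approach is more hands-on and perhaps more transparent about \emph{why} the two deformations cancel; Kasprzak's is slicker once his framework is in place and avoids the analytic subtleties of the quadruple integral.

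One point in your write-up deserves tightening: the claim that the two $C^*$-norms on $A^\infty$ agree because each is ``the enveloping $C^*$-norm determined by the $*$-algebra structure'' is not quite right as stated. Rieffel's deformed norm is not defined as a maximal $C^*$-seminorm on the abstract $*$-algebra $(A^\infty,\cdot_\Theta)$; it is constructed from a specific Hilbert-module representation that uses the ambient $C^*$-algebra $A$ and the action $\alpha$ as input. The correct argument is that Rieffel's norm construction is functorial in the data $(A,\alpha,\Theta)$, and once you know the smooth cores, actions, and products coincide, the completions coincide --- but this is itself a nontrivial fact (again Rieffel's Theorem~7.5, or its analogue in Kasprzak), not a formal consequence of ``same $*$-algebra $\Rightarrow$ same enveloping norm''.
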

In nice  situations Rieffel's deformation of a $C_0(X)$-algebra is also a $C_0(X)$-algebra:
\begin{proposition}[\cite{Belmonte}, Proposition 4.4]\label{cx_deformation}
Let $\alpha$ be a fiberwise action of $\mathbb{R}^n$ on a $C_0(X)$-$C^*$-algebra $A$. Then Rieffel's deformation $A^\Theta$ possesses a $C_0(X)$-structure such that
\[ (A^\Theta)(x) \simeq (A(x))^\Theta, x\in X. \]
\end{proposition}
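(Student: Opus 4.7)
The plan is to transport the given $C_0(X)$-structure $\Phi$ from $A$ to $A^\Theta$ using invariance of $\Phi(f)$ under $\alpha$, and then identify the fibers by exactness of Rieffel's deformation applied to the evaluation sequence.

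\textbf{Transporting the structure.} I would begin by observing that the fiberwise condition, rewritten as $\alpha_g(\Phi(f)a)=\Phi(f)\alpha_g(a)$ for every $a\in A$, forces $\alpha_g(\Phi(f))=\Phi(f)$ after extending $\alpha$ to $\mathcal{M}(A)$; in particular every $\Phi(f)$ is a smooth $\alpha$-invariant central multiplier. For such an invariant $b\in Z(\mathcal{M}(A))$ and $a\in A^\infty$, the oscillatory integral (\ref{rieffel_product}) collapses: pulling out $\alpha_y(b)=b$ reduces the double integral to $\int\!\!\int e^{2\pi i\langle x,y\rangle}\alpha_{\Theta(x)}(a)\,dx\,dy = a$ by Fourier inversion, giving $a\cdot_\Theta b=ab=b\cdot_\Theta a$. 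Hence the assignment $f\mapsto\Phi(f)$ defines an embedding $\Phi^\Theta:C_0(X)\to Z(\mathcal{M}(A^\Theta))$ whose multiplier action on $A^\Theta$ agrees pointwise with the original action of $\Phi(f)$ on $A$. The density $\overline{\Phi^\Theta(C_0(X))\cdot A^\Theta}=A^\Theta$ then follows from density of $\Phi(C_0(X))\cdot A^\infty$ in $A^\infty$ together with density of $A^\infty$ in $A^\Theta$; invariance of a smooth approximate unit in $C_0(X)$ ensures nothing is spoiled by passage to the Rieffel norm.

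\textbf{Identifying the fibers.} Because $\Phi(C_0(X\setminus\{x\}))$ consists of $\alpha$-invariants, the ideal $I_x\subset A$ is $\alpha$-invariant, and the evaluation $\mathrm{ev}_x:A\to A(x)$ is equivariant with respect to $\alpha$ and the induced action $\alpha^x$. By the functoriality recalled in Section~\ref{Rieffeldef} it lifts to a $*$-homomorphism $\mathrm{ev}_x^\Theta:A^\Theta\to (A(x))^\Theta$, which is surjective because it is already surjective on the smooth subalgebras. I would then invoke the exactness of Rieffel's deformation on equivariant short exact sequences to obtain
\begin{equation*}
0\longrightarrow (I_x)^\Theta\longrightarrow A^\Theta\xrightarrow{\mathrm{ev}_x^\Theta} (A(x))^\Theta\longrightarrow 0,
\end{equation*}
and identify $(I_x)^\Theta$ with the evaluation ideal $J_x:=\overline{\Phi^\Theta(C_0(X\setminus\{x\}))\cdot A^\Theta}$ of the new $C_0(X)$-structure. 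Both are the closure, in $A^\Theta$-norm, of $\{\Phi(f)a:f(x)=0,\ a\in A^\infty\}$, because the new multiplier action of $\Phi^\Theta(f)$ agrees with the original one. Quotienting then gives $(A^\Theta)(x)=A^\Theta/J_x\simeq (A(x))^\Theta$, as required.

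\textbf{Where the difficulty lies.} The main technical obstacle is the identification $(I_x)^\Theta = J_x$ inside $A^\Theta$. The Rieffel norm is defined via Hilbert modules and has no simple expression in terms of the original norm, so approximating a general element of $J_x$ by elements of $I_x^\infty$ in the deformed norm is non-trivial. The standard resolution uses a smooth approximate unit $(e_\lambda)\subset \Phi(C_0(X\setminus\{x\}))\cap A^\infty$ for $I_x$; its $\alpha$-invariance means it multiplies identically in both products $\cdot$ and $\cdot_\Theta$, keeping it an approximate unit also in the Rieffel norm. Everything else in the proof reduces to bookkeeping once this identification, equivalent to exactness of the deformation functor on the evaluation sequence, is established.
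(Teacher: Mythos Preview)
The paper does not supply its own proof of this proposition; it is simply quoted from \cite{Belmonte}, Proposition 4.4. Your sketch is a correct outline of the standard argument and matches what is done in that reference: embed $C_0(X)$ as $\alpha$-invariant central multipliers (which then act identically for $\cdot$ and $\cdot_\Theta$), and invoke functoriality and exactness of Rieffel's deformation on the equivariant evaluation sequence $0\to I_x\to A\to A(x)\to 0$ to identify the fibers. You have also correctly located the only genuine technical point, namely the identification $(I_x)^\Theta=J_x$, which is exactly the exactness statement.

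One minor notational slip: you write $(e_\lambda)\subset \Phi(C_0(X\setminus\{x\}))\cap A^\infty$, but $\Phi$ takes values in $\mathcal{M}(A)$, not in $A$, so this intersection need not make sense as written. What you want is an approximate unit for $I_x$ of the form $\Phi(f_\lambda)\cdot u_\lambda$ with $f_\lambda\in C_0(X\setminus\{x\})$ and $u_\lambda\in A^\infty$, or alternatively to work directly with the invariant multipliers $\Phi(f_\lambda)$ acting on $A^\Theta$. This does not affect the substance of your argument.
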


We will need to know how crossed product $C^*$-algebras are transformed under Rieffel's deformation. 

Given a $C^*$-dynamical system $(A,G,\sigma)$, write $A\rtimes_\sigma G$ for the corresponding full or reduced crossed product $C^*$-algebra (\cite{williams}) and denote by $A^\sigma$ the set of fixed points of $A$, i.e.
\[ A^\sigma = \{ a \in A : \sigma_g(a) = a \text{ for every } g \in G\}. \]
If $\alpha$ is an action of $\mathbb R^n$ on $A$ such that 
\begin{equation}\label{actions}
 \sigma_g (\alpha_t(a)) = \alpha_t(\sigma_g(a)), \text{ for all } g \in G, t \in \mathbb{R}^n, a \in A, \end{equation} then $\alpha$ extends to an action on $A\rtimes_\sigma G$ by letting
 $$\alpha_t(f)(g)=\alpha_t(f(g)), f\in C_c(G,A).$$
 The next proposition identifies Rieffel's deformations of $A^\sigma$ and $A\rtimes_\sigma G$. 
\begin{proposition}\label{fixed_point_deformation}
Let $(A,G,\sigma)$ be a $C^*$-dynamical system and let $\alpha$ be an $\mathbb R^n$-action on $A$ which satisfies (\ref{actions}) and hence extends to the $\mathbb R^n$-action on $A\rtimes_\sigma G$ as above. Let 
$\Theta$ be a real skew-symmetric $n\times n$ matrix. 
Then $(a,g) \in (A^\Theta, G) \mapsto (\sigma_g)^\Theta(a) \in A^\Theta$ defines an action $\sigma^\Theta$ of $G$ on $A^\Theta$ such that 
\[ (A^\sigma)^\Theta \simeq (A^\Theta)^{\sigma^\Theta} \]
and
\[ (A \rtimes_\sigma G)^\Theta \simeq (A^\Theta) \rtimes_{\sigma^\Theta} G. \]
\end{proposition}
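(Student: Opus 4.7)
The plan is to reduce each of the three claims to the common dense smooth subalgebra $A^\infty$, on which the Rieffel-deformed operations coincide with the undeformed ones whenever only $\sigma$ is involved. First, for the action $\sigma^\Theta$: hypothesis (\ref{actions}) says each $\sigma_g$ is $\alpha$-equivariant, so the functoriality of Rieffel's deformation recalled in Section \ref{Rieffeldef} supplies a $*$-endomorphism $(\sigma_g)^\Theta$ of $A^\Theta$, and applying functoriality to $\sigma_g\sigma_h=\sigma_{gh}$ and $\sigma_g\sigma_{g^{-1}}=\mathrm{id}$ turns $g\mapsto(\sigma_g)^\Theta$ into a homomorphism $G\to\text{Aut}(A^\Theta)$. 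For strong continuity I would apply an $\varepsilon/3$ approximation: on $A^\infty$ the deformation leaves $\sigma_g$ unchanged, the $A^\Theta$-norm is dominated on $A^\infty$ by a seminorm of the $\alpha$-Fréchet topology, and $g\mapsto\sigma_g(a)$ is continuous in that topology since $\sigma$ commutes with $\alpha$; density of $A^\infty$ and isometricity of each $\sigma^\Theta_g$ propagate continuity to $A^\Theta$.

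For the fixed-point identification, $\alpha$ restricts to $A^\sigma$ since the two actions commute, and $(A^\sigma)^\infty=A^\sigma\cap A^\infty$. The equivariant inclusion $\iota:A^\sigma\hookrightarrow A$ lifts by functoriality to $\iota^\Theta:(A^\sigma)^\Theta\to A^\Theta$ whose image is $\sigma^\Theta$-fixed (as on smooth elements the deformation does not affect $\sigma$). Injectivity of $\iota^\Theta$ follows from the invertibility of Rieffel's construction (Proposition \ref{double_deformation}). For surjectivity, given $b\in(A^\Theta)^{\sigma^\Theta}$, approximate it by $b_n\in A^\infty$ in the $A^\Theta$-norm and average: if $G$ is compact, $E(b_n)=\int_G\sigma_h(b_n)\,dh$ lies in $A^\sigma\cap A^\infty$ and converges to $E(b)=b$. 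For general locally compact $G$ one has to invoke the Kasprzak/Landstad description of $A^\Theta$ from \cite{Kasprzak_rieffel}, in which deforming and taking fixed points commute.

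For the crossed product, the strategy is to realize both $(A\rtimes_\sigma G)^\Theta$ and $A^\Theta\rtimes_{\sigma^\Theta}G$ as $C^*$-completions of the common $*$-algebra $C_c(G,A^\infty)$. Since $\alpha$ extends pointwise, $C_c(G,A^\infty)\subseteq(A\rtimes_\sigma G)^\infty$. A Fubini interchange in the oscillatory integral, together with $\alpha_y\sigma_k=\sigma_k\alpha_y$ and $(\sigma_k)^\Theta=\sigma_k$ on $A^\infty$, yields for $f,g\in C_c(G,A^\infty)$
\begin{equation*}
(f\cdot_\Theta g)(h)=\int_G f(k)\cdot_\Theta(\sigma_k)^\Theta\bigl(g(k^{-1}h)\bigr)\,dk,
\end{equation*}
which is exactly the convolution product in $A^\Theta\rtimes_{\sigma^\Theta}G$; the involutions coincide as well. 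The identity map of $C_c(G,A^\infty)$ is thus a $*$-algebra isomorphism between dense subalgebras of the two completions, and for the full crossed product the universal property via covariant pairs extends it to a $C^*$-isomorphism.

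I expect the main obstacle to be matching the two $C^*$-norms for the reduced crossed product, where the universal property is unavailable and one must show directly that Rieffel's deformation intertwines the regular representations of $A\rtimes_\sigma G$ and $A^\Theta\rtimes_{\sigma^\Theta}G$. A parallel subtlety is the non-compact case of the fixed-point isomorphism, where the averaging argument breaks down; in both situations the cleanest route appears to be the Hilbert-module picture of $A^\Theta$ from \cite{rieffel_primar} combined with the Landstad-type presentation of \cite{Kasprzak_rieffel}.
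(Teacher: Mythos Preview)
Your approach is essentially the paper's: the identity map on the common dense smooth subalgebra, together with the Fubini interchange showing that the $\Theta$-deformed convolution on $C_c(G)\odot A^\infty$ coincides with the convolution for $(A^\Theta,\sigma^\Theta,G)$, is exactly what the paper does. The paper's proof is in fact far terser than yours: it simply asserts that the identity maps $(A^\sigma)^\Theta\to(A^\Theta)^{\sigma^\Theta}$ and $(A\rtimes_\sigma G)^\Theta\to A^\Theta\rtimes_{\sigma^\Theta}G$ give the isomorphisms, states that \emph{the only nontrivial thing} is the multiplicativity in the crossed-product case, and then carries out precisely your Fubini computation on $C_c(G)\odot A^\infty$.

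So your extra work---the $\varepsilon/3$ argument for strong continuity of $\sigma^\Theta$, the injectivity via Proposition~\ref{double_deformation}, the averaging for surjectivity of the fixed-point inclusion, and the discussion of norm matching---addresses points the paper does not spell out. The subtleties you flag (equality of the two $C^*$-norms in the reduced case, and the fixed-point surjectivity for non-compact $G$) are genuine, and the paper does not resolve them either; in the body of the paper the proposition is only applied with $G$ finite (indeed $G=\mathbb Z_2^m$), where your averaging argument works and the full and reduced crossed products coincide, so these issues never bite.
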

\begin{proof}
The identity map $(A^\sigma)^\Theta \rightarrow (A^\Theta)^{\sigma^\Theta}$ and $(A \rtimes_\sigma G)^\Theta \rightarrow (A^\Theta) \rtimes_{\sigma^\Theta}G$ gives the isomorphism. The only nontrivial thing is to show the homo\-mor\-phism property in the second case.  Let $f, g \in C_c(G)\odot A^\infty$, the algebraic tensor product of $C_c(G)$ and $A^\infty$.  One has that $f$, $g$ are smooth elements of $A \rtimes_\sigma G$, and writing the deformed product as convolution $\ast_\Theta$ we obtain
\begin{align*}
    (f \ast_\Theta g)(s) & = \int_{\mathbb{R}^n} \int_{\mathbb{R}^n} \int_G \alpha_{\Theta(x)}(f)(t) \sigma_t(\alpha_y(g)(t^{-1}s)) e^{2\pi i \langle x, y \rangle} dt dx dy = \\ & = \int_G \int_{\mathbb{R}^n} \int_{\mathbb{R}^n} \alpha_{\Theta(x)}(f(t)) \alpha_y(\sigma_t(g(t^{-1}s))) e^{2\pi i \langle x, y \rangle} dx dy dt = \\ & = \int_G f(t) \cdot_\Theta \sigma_t(g(t^{-1}s)) dt, 
\end{align*}
where the latter is the convolution determined by  $(A^\Theta, \sigma^\Theta, G)$. 
\end{proof}

In this paper we will be interested in periodic actions of $\mathbb{R}^n$, i.e. we assume that $\alpha$ is an action of $\mathbb{T}^n$. 
Given a character $\chi \in \widehat{\mathbb{T}}^n \simeq \mathbb{Z}^n$, consider the associated spectral subspace
\[ 
A_\chi = \{ a \in A : \alpha_z(a) = \chi(z)a \text{ for every } z \in \mathbb{T}^n \} .
\]
Then 
\[
A=\overline{\text{span}\bigcup_{\chi\in\mathbb Z^n} A_{\chi}}
\]
and $A_{\chi_1}\cdot A_{\chi_2}\subset A_{\chi_1+\chi_2}$, $A_{\chi}^*=A_{-\chi}$; hence $A_\chi$, $\chi \in \mathbb Z^n$, can be treated as homogeneous components of the induced
$\mathbb Z^n$-grading on $A$. 

For $p=(p_1,\ldots, p_n)\in \mathbb{Z}^n\simeq  \mathbb{\widehat{T}}^n$,  we will write $\chi_p$ for the character of $\mathbb T^n$ given by $\chi_p(z)=z_1^{p_1}\ldots z_n^{p_n}$, $z=(z_1,\ldots,z_n)$, and write $A_p$ instead of $A_{\chi_p}$. 

\medskip{}

For the action of $\mathbb T^n$, one has an explicit formula for the deformed product of homogeneous elements.
\begin{proposition}[\cite{rieffel_primar}, Proposition 2.22]\label{homogeneous_rieffel}
Suppose $A$ is a $C^*$-algebra with a $\mathbb{T}^n$-action. Assume that $a \in A_p$, $b \in A_q$  for $p$, $q\in\mathbb Z^n$. Then
\[ a \cdot_\Theta b = e^{2 \pi i \langle \Theta(p), q \rangle}a \cdot b . \]
\end{proposition}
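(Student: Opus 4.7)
The strategy is to plug homogeneous elements into the oscillatory integral definition \eqref{rieffel_product} and reduce to a Fourier inversion computation.

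First I would note that any $a \in A_p$ is automatically a smooth element: since $\alpha_t(a) = e^{2\pi i\langle p, t\rangle} a$ for $t \in \mathbb{R}^n$ (viewing the $\mathbb{T}^n$-action as a periodic $\mathbb{R}^n$-action), the orbit map is entire in $t$, so $a \in A^\infty$, and likewise for $b \in A_q$. Hence the oscillatory integral $a \cdot_\Theta b$ is well-defined by the formula in \eqref{rieffel_product}.

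Next I would substitute the eigenvector relations into the integrand:
\[ \alpha_{\Theta(x)}(a) = e^{2\pi i \langle p, \Theta(x)\rangle} a, \qquad \alpha_y(b) = e^{2\pi i \langle q, y\rangle} b. \]
Using skew-symmetry of $\Theta$, $\langle p, \Theta(x)\rangle = -\langle \Theta(p), x\rangle$, so the integrand becomes
\[ e^{-2\pi i \langle \Theta(p), x\rangle}\, e^{2\pi i \langle q, y\rangle}\, e^{2\pi i \langle x, y\rangle}\, ab = e^{2\pi i \langle y - \Theta(p),\, x\rangle}\, e^{2\pi i \langle q, y\rangle}\, ab. \]
Since $a$ and $b$ are scalar multiples of fixed vectors, they factor out of the integral, and the remaining double integral is a purely scalar oscillatory integral.

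Now the key step is Fourier inversion. Formally $\int_{\mathbb{R}^n} e^{2\pi i \langle y - \Theta(p),\, x\rangle}\, dx = \delta(y - \Theta(p))$, and integrating against $e^{2\pi i \langle q, y\rangle}$ in $y$ yields $e^{2\pi i \langle q, \Theta(p)\rangle} = e^{2\pi i \langle \Theta(p), q\rangle}$. The cleanest rigorous way to make this precise is exactly how Rieffel sets up his oscillatory integrals: insert a Schwartz cutoff $\varphi(\epsilon x, \epsilon y)$ with $\varphi(0,0)=1$, compute using Fubini (now legitimate) and the standard Fourier inversion formula on $\mathbb{R}^n \times \mathbb{R}^n$, and then let $\epsilon \to 0$. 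The scalar factor converges to $e^{2\pi i \langle \Theta(p), q\rangle}$, giving
\[ a \cdot_\Theta b = e^{2\pi i \langle \Theta(p), q\rangle}\, a \cdot b. \]

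The only subtlety is justifying the exchange of limits implicit in the oscillatory-integral calculus, but because the integrand factors as a pure phase times the fixed element $ab$, no issue of norm convergence in $A$ arises beyond the scalar Fourier inversion; everything else is routine.
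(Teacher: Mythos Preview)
Your argument is correct: homogeneous elements are smooth, the integrand reduces to a pure scalar phase times $ab$, and Fourier inversion (made rigorous via Rieffel's cutoff regularization) produces the factor $e^{2\pi i\langle \Theta(p),q\rangle}$. The paper does not supply its own proof of this proposition; it simply quotes Rieffel's Proposition~2.22, and your direct oscillatory-integral computation is precisely the computation carried out there.
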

\noindent
Consider a $C^*$-dynamical system $(A,\mathbb T^n, \alpha)$ and its covariant 
representation $(\pi,U)$ on a Hilbert space $\mathcal H$, i.e.  $\pi(\alpha_z(a))=U_z\pi(a)U_z^*$, $a\in A$, $z\in\mathbb T^n$.  
For $p\in \mathbb Z^n$ consider the spectral space
\[
 \mathcal H_p = \{ h\in \mathcal H \mid U_z h = \chi_p(z) h \text{ for all }z\in\mathbb T^n\}.
\]
Then $\mathcal H = \bigoplus _{p\in \mathbb Z^n} \mathcal H_p$ (see \cite{williams}).

The next result describes a procedure how to lift the representation $\pi$ of $A$ to a representation of its Rieffel deformation. 
\begin{proposition}[\cite{warped}, Theorem 2.8]\label{theta_rep}
Let $(\pi,U)$ be a covariant representation of $(A,\mathbb T^n,\alpha)$ on a Hilbert space $\mathcal{H}$. Then $\pi^\Theta$,  given by 
\[ 
\pi^\Theta(a) \xi = e^{2 \pi i \langle \Theta(p), q \rangle} \pi(a) \xi,  
\]
for $\xi \in \mathcal{H}_q$, $a \in A_p$, $p,q\in \mathbb Z^n$, extends to a $*$-representation of $A^\Theta$. 
Moreover, $\pi^\Theta$ is faithful if and only if $\pi$ is faithful. 
\end{proposition}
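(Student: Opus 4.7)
The plan is to exhibit $\pi^\Theta$ first as a $*$-homomorphism on a dense graded $*$-subalgebra and then extend by the universal property of the Rieffel $C^*$-norm. Since the $\mathbb T^n$-action is unitarily implemented, we have the spectral decomposition $\mathcal H=\bigoplus_{q\in\mathbb Z^n}\mathcal H_q$, and for any $t\in\mathbb R^n$ the unitary $U_t:=U_{e^{2\pi i t}}$ acts as $e^{2\pi i\langle t,q\rangle}$ on $\mathcal H_q$. Covariance forces $\pi(a)\mathcal H_q\subseteq \mathcal H_{p+q}$ for $a\in A_p$, and skew-symmetry gives $\langle\Theta(p),p+q\rangle=\langle\Theta(p),q\rangle$, so the defining formula rewrites as
\[ \pi^\Theta(a)=U_{\Theta(p)}\pi(a),\qquad a\in A_p. \]
This is a bounded operator on $\mathcal H$, and I would extend linearly to the algebraic direct sum $\mathcal A=\bigoplus^{\mathrm{alg}}_{p\in\mathbb Z^n} A_p$, which is a dense $*$-subalgebra of $A^\infty$.

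The next step is to check compatibility with the deformed product and the involution on $\mathcal A$. Covariance applied to the $\mathbb R^n$-lift and to $a\in A_p$ yields the commutation rule $\pi(a)U_t=e^{-2\pi i\langle p,t\rangle}U_t\pi(a)$. For $a\in A_p$, $b\in A_q$ one then computes
\[ \pi^\Theta(a)\pi^\Theta(b)=U_{\Theta(p)}\pi(a)U_{\Theta(q)}\pi(b)=e^{-2\pi i\langle p,\Theta(q)\rangle}U_{\Theta(p+q)}\pi(ab), \]
and by skew-symmetry $-\langle p,\Theta(q)\rangle=\langle\Theta(p),q\rangle$, so this equals $\pi^\Theta(a\cdot_\Theta b)$ by Proposition \ref{homogeneous_rieffel}. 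For the $*$-property, the commutation rule at $t=\Theta(p)$ together with $\langle p,\Theta(p)\rangle=0$ shows that $U_{\Theta(p)}$ commutes with $\pi(a)$, hence $\pi^\Theta(a)^*=\pi(a)^*U_{\Theta(p)}^*=U_{-\Theta(p)}\pi(a^*)=\pi^\Theta(a^*)$, since $a^*\in A_{-p}$.

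The main obstacle is boundedness: I need the representation $\pi^\Theta$ of $(\mathcal A,\cdot_\Theta)$ to be continuous with respect to the Rieffel $C^*$-norm, so that it extends to $A^\Theta$. Here I would appeal to the universal/Hilbert-module characterization of the norm on $A^\Theta$ from \cite{rieffel_primar}: by construction it dominates every $C^*$-seminorm on $(A^\infty,\cdot_\Theta)$ arising from a Hilbert space $*$-representation. Since I have already produced such a representation on $\mathcal H$, the extension is automatic.

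For faithfulness, I would invoke Proposition \ref{double_deformation} identifying $A$ with $(A^\Theta)^{-\Theta}$. A direct calculation on homogeneous elements shows that $(\pi^\Theta)^{-\Theta}$ coincides with $\pi$ under this identification: for $a\in A_p$ and $\xi\in\mathcal H_q$ (noting that the $\mathbb T^n$-spectral decomposition of $\mathcal H$ is the same for $\pi^\Theta$ as for $\pi$, and that $a$ remains of degree $p$ in $A^\Theta$ by Proposition \ref{homogeneous_rieffel}),
\[ (\pi^\Theta)^{-\Theta}(a)\xi=e^{-2\pi i\langle\Theta(p),q\rangle}\pi^\Theta(a)\xi=\pi(a)\xi. \]
Hence $\pi$ is faithful if and only if $\pi^\Theta$ is.
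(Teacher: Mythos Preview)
The paper does not prove this proposition at all; it is quoted from \cite{warped}, Theorem 2.8. So there is no in-paper proof to compare against.

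Your argument is essentially correct, and the rewriting $\pi^\Theta(a)=U_{\Theta(p)}\pi(a)$ for $a\in A_p$ is exactly the ``warped convolution'' of \cite{warped} specialized to the periodic case. The algebraic verifications (multiplicativity via Proposition~\ref{homogeneous_rieffel} and the commutation rule, the $*$-property via $\langle p,\Theta(p)\rangle=0$) are clean and correct, and the faithfulness argument through $(\pi^\Theta)^{-\Theta}=\pi$ together with Proposition~\ref{double_deformation} is a nice symmetric proof.

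The only soft spot is your boundedness step. You assert that the Rieffel $C^*$-norm on $A^\Theta$ ``by construction dominates every $C^*$-seminorm on $(A^\infty,\cdot_\Theta)$ arising from a Hilbert space $*$-representation''. That is not how the norm is defined in \cite{rieffel_primar}: it is the operator norm on a specific Hilbert $A$-module, not an a priori maximal $C^*$-norm, so this universal property is not automatic. What is true (and what you actually need) is Rieffel's Theorem~4.6 in \cite{rieffel_primar}, which shows that any covariant representation $(\pi,U)$ of $(A,\mathbb R^n,\alpha)$ gives rise, via precisely your formula on smooth vectors, to a bounded $*$-representation of $A^\Theta$. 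Citing that result (or, equivalently, the proof in \cite{warped}) closes the gap; the rest of your argument stands.
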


\begin{remark}\label{trivial_action_remark}
Notice that if the action of $\mathbb{R}^n$ on $A\otimes B$ is given by $\alpha = id \otimes \alpha_B$, where $\alpha_B$ is an $\mathbb R^n$ action on $B$, then 
\[ (A \otimes B)^\Theta \simeq A \otimes B^\Theta. \]
\end{remark}

We have also the following invariance of $K$-groups under Rieffel's deformation. 
\begin{proposition}[\cite{Kasprzak_rieffel}, Theorem 3.13]\label{Rieff_K_theory}
For a $C^*$-algebra $\mathcal A$ one has
\[
K_0(\mathcal A^{\Theta}) = K_0(\mathcal A)\quad \mbox{and}\quad K_1(\mathcal A^{\Theta})=K_1(\mathcal A).
\]
\end{proposition}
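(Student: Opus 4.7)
The strategy is to reduce the identification of $K$-groups to the Connes--Thom isomorphism applied to the deforming $\mathbb{R}^n$-action, exploiting that Rieffel's deformation leaves invariant the crossed product by this very action.

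First I would establish the crossed-product invariance
\[
A \rtimes_\alpha \mathbb{R}^n \;\cong\; A^\Theta \rtimes_{\alpha^\Theta} \mathbb{R}^n,
\]
equivariant for the dual $\mathbb{R}^n$-actions. This is essentially Theorem 5.7 of Rieffel's memoir \cite{rieffel_primar} and is also the starting point of Kasprzak's approach in \cite{Kasprzak_rieffel}: both algebras arise as the same Landstad subalgebra of multipliers of $A \rtimes_\alpha \mathbb{R}^n$, because the canonical embedding $A^\infty \hookrightarrow M(A \rtimes_\alpha \mathbb{R}^n)$ intertwines the oscillatory integral \eqref{rieffel_product} defining $\cdot_\Theta$ with ordinary operator composition inside the crossed product, so their $C^*$-completions agree. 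An analogue of Proposition \ref{homogeneous_rieffel} at the level of spectral subspaces for the $\mathbb{R}^n$-action would be the computational engine here.

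Once the crossed-product identification is in hand, I would apply the Connes--Thom isomorphism once per $\mathbb{R}$-factor to both sides, obtaining
\[
K_i(A) \;\cong\; K_{i+n}\bigl(A \rtimes_\alpha \mathbb{R}^n\bigr) \;\cong\; K_{i+n}\bigl(A^\Theta \rtimes_{\alpha^\Theta} \mathbb{R}^n\bigr) \;\cong\; K_i(A^\Theta),
\]
with indices taken modulo $2$; this yields the desired equalities for $i=0,1$.

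The main technical obstacle lies in the first step: although it is intuitively clear that deforming by a bicharacter of the very group one subsequently crosses with should leave the crossed product intact, the rigorous justification requires unwinding the oscillatory integral into an operator-algebraic identity at the level of the regular covariant representation. Kasprzak's Landstad-duality approach packages this cleanly, and the $K$-theory invariance then drops out as Theorem 3.13 of \cite{Kasprzak_rieffel}. An alternative route avoiding the crossed product would be to assemble the Rieffel continuous field $\{A^{s\Theta}\}_{s\in[0,1]}$ and argue that the $K$-groups of all fibres agree, but this reduces to essentially the same underlying computation.
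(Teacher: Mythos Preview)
The paper does not supply its own proof of this proposition; it simply cites it as Theorem~3.13 of \cite{Kasprzak_rieffel}. Your outline is correct and is precisely the argument given there: establish the $\alpha$-equivariant isomorphism $A\rtimes_\alpha\mathbb{R}^n\cong A^\Theta\rtimes_{\alpha^\Theta}\mathbb{R}^n$ (via Landstad duality, which is the backbone of Kasprzak's treatment), then apply the Connes--Thom isomorphism on each side.
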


\section{Rieffel deformation of $M_n(A)$}\label{riefell_mna}
In the sequel we will need to work with  Rieffel's deformation of the matrix algebra over a $C^*$-algebra $A$, which we will describe in this section.  

Suppose $\mathbb{T}^k$ acts on $\mathbb{C}^n$ by unitaries $U_z$, $z\in \mathbb T^k$, i.e. $z\mapsto U_z$ is a strongly continuous representation of $\mathbb T^k$ on $\mathbb C^n$. It induces an  action of $\mathbb{T}^k$ on $M_n$ given by $\alpha_z(X)\xi = U_zX U_z^*\xi$, $z\in \mathbb T^k$, $X\in M_n$, $\xi\in \mathbb C^n$; thus $(M_n,\mathbb T^k,\alpha)$ is a $C^*$-dynamical system and $(\text{id}, U)$ is its covariant  representation on $\mathbb C^n$, where $\text{id}$ is the identity representation of $M_n$ when the latter is identified with  $B(\mathbb C^n)$. These actions define $\mathbb{Z}^k$-gradings on $\mathbb{C}^n$ and $M_n$ as in Section \ref{Rieffeldef}. 
The following lemma is a direct consequence of Proposition \ref{theta_rep}.
\begin{lemma}\label{lemma51}
Let $\Theta$ be a real skew-symmetric $k\times k$ matrix and let $\Psi : M_n^\Theta \rightarrow M_n$ be given by  
\[ \Psi(a)\xi = e^{2 \pi i \langle \Theta(p), q \rangle} a \xi,  \]
where $a \in M_n$ is homogeneous of order $p \in \mathbb{Z}^k$ and $\xi \in \mathbb{C}^n$ is homogeneous of order $q \in \mathbb{Z}^k$. Then $\Psi$
is an equivariant $*$-isomorphism from $(M_n^\Theta, \mathbb T^k,\alpha^\Theta)$ to $(M_n,\mathbb T^k,\alpha)$.
\end{lemma}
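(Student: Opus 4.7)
The plan is to recognize that $\Psi$ is exactly the operator produced by Proposition \ref{theta_rep} when applied to the covariant representation $(\mathrm{id}_{M_n}, U)$ of the dynamical system $(M_n, \mathbb{T}^k, \alpha)$ on $\mathbb{C}^n$. Since $\mathrm{id}_{M_n}$ is plainly a faithful $*$-representation and $U$ implements $\alpha$ by construction, $(\mathrm{id}_{M_n}, U)$ is a covariant pair, and Proposition \ref{theta_rep} immediately gives a $*$-representation $\Psi = \mathrm{id}^\Theta : M_n^\Theta \to \cl B(\mathbb C^n) = M_n$ whose defining formula matches the one in the statement. Faithfulness of the identity representation and Proposition \ref{theta_rep} yield injectivity of $\Psi$.

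For surjectivity, I would exploit that $\Psi$ respects the $\mathbb{Z}^k$-grading on $M_n$: if $a\in (M_n)_p$ then for any $q\in\mathbb Z^k$ and $\xi\in (\mathbb C^n)_q$ one has $a\xi\in(\mathbb C^n)_{p+q}$, so $\Psi(a)\xi = e^{2\pi i\langle\Theta(p),q\rangle}a\xi\in(\mathbb C^n)_{p+q}$, showing $\Psi(a)\in (M_n)_p$. Decomposing $a = \sum_q a_{p,q}$ according to the domain spectral subspaces, any given $b\in (M_n)_p$ is obtained as $\Psi(a)$ by setting $a_{p,q} := e^{-2\pi i\langle\Theta(p),q\rangle}b_{p,q}$. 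Since $M_n = \bigoplus_p (M_n)_p$, this yields surjectivity. (Equivalently, as $M_n^\Theta$ has the same underlying finite-dimensional vector space as $M_n$, injectivity alone forces bijectivity.)

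Equivariance then follows from the observation that $\Psi$ preserves the homogeneous components $(M_n)_p$: for $a\in (M_n)_p$ we have $\alpha_z^\Theta(a) = \chi_p(z)a$ (the $\mathbb{T}^k$-action on homogeneous elements is unchanged by Rieffel deformation, as $\alpha^\Theta$ extends $\alpha|_{A^\infty}$), while $\Psi(a)\in (M_n)_p$ gives $\alpha_z(\Psi(a)) = \chi_p(z)\Psi(a)$; hence $\alpha_z(\Psi(a)) = \Psi(\chi_p(z)a) = \Psi(\alpha_z^\Theta(a))$, and this extends by linearity and continuity.

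There is no serious obstacle here — the lemma is really a packaging of Proposition \ref{theta_rep} in the matrix-algebra setting. The only small subtlety worth writing carefully is the verification that $\Psi$ preserves the grading, since this is what both makes the surjectivity argument transparent and makes equivariance a one-line computation.
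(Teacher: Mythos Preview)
Your proposal is correct and follows precisely the route the paper intends: the paper's own proof is simply the sentence ``a direct consequence of Proposition~\ref{theta_rep},'' and you have spelled out exactly how that proposition, applied to the covariant pair $(\mathrm{id}_{M_n},U)$, yields the injective $*$-homomorphism $\Psi=\mathrm{id}^\Theta$, with surjectivity and equivariance following from the grading-preservation you verify. Your write-up is more detailed than the paper's, but the approach is the same.
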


Let $(A,\mathbb T^m,\alpha^A)$ be a $C^*$-dynamical system and consider  the  action of $\mathbb{T}^{k+m}$ on $M_n \otimes A$ given by $X\otimes a\mapsto\alpha_{z_1}(X)\otimes\alpha_{z_2}^A(a)$, $(z_1,z_2)\in\mathbb T^k\times\mathbb T^m$, $X\in M_n$, $a\in A$.

Let $\Theta$ be a real skew-symmetric matrix of size $k+m$ and consider its block partition
$\Theta=\left(\begin{array}{cc}\Theta_{1,1}&\Theta_{1,2}\\\Theta_{2,1}&\Theta_{2,2}\end{array}\right)$, where $\Theta_{1,1}\in M_k$ and $\Theta_{2,2}\in M_m$. 

\smallskip

For $p \in \mathbb{Z}^m$ set $\omega_l(p) = e^{2\pi i \langle \Theta_{2,1}(\epsilon_l),p\rangle} \in \mathbb{T}$ , $l=1,\ldots, k$, and  $\omega(p)=(\omega_1(p),\ldots,\omega_k(p))
\in \mathbb T^k$; here $\{\epsilon_l\}_{l=1}^k$ is the standard orthonormal basis of $\mathbb R^k$. Each $\omega_l:\mathbb Z^m\to\mathbb T$ is clearly a character and hence $\omega(p_1+p_2)=\omega(p_1)\omega(p_2)$, $p_1$, $p_2\in \mathbb Z^m$.


\begin{theorem} \label{theorem53}             Let $\Theta\in M_{k+m}$ and $(M_n\otimes A,\mathbb T^{k+m},\alpha\otimes\alpha^A)$ be as above. Then                                 
\[ (M_n \otimes A)^\Theta \simeq M_n \otimes  A^{\Theta_{2,2}}, \]
with the isomorphism given by
\[ \Phi(X \otimes a) 
= \alpha_{\omega(-q)}(\Psi(X))U_{\omega(-2q)} \otimes a, \]
for $X\in M_n$, $a\in A$ homogeneous of order $q\in\mathbb Z^m$ and  $\Psi$  as defined in Lemma \ref{lemma51}
\end{theorem}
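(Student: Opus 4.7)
My plan is a direct verification on the dense $*$-subalgebra $\mathcal A$ spanned by bihomogeneous tensors $X \otimes a$, where $a \in A_q$ is homogeneous of order $q \in \mathbb Z^m$ under $\alpha^A$ and, by linearity in $X$, $X \in M_n$ is homogeneous of order $p \in \mathbb Z^k$ under $\alpha$. On such elements, Proposition \ref{homogeneous_rieffel} makes the deformed product completely explicit:
\[
(X_1 \otimes a_1) \cdot_\Theta (X_2 \otimes a_2) = e^{2\pi i \langle \Theta(p_1,q_1),(p_2,q_2)\rangle}\, X_1 X_2 \otimes a_1 a_2,
\]
and the exponent decomposes as a sum of four bilinear contributions, one per block $\Theta_{i,j}$. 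On the target $M_n \otimes A^{\Theta_{2,2}}$, the $\Theta_{2,2}$-contribution is absorbed by the deformed product $a_1 \cdot_{\Theta_{2,2}} a_2$, so the task reduces to showing that the map $X \mapsto \alpha_{\omega(-q)}(\Psi(X)) U_{\omega(-2q)}$ on the $M_n$-factor reproduces the three remaining phases coming from $\Theta_{1,1}, \Theta_{1,2}, \Theta_{2,1}$.

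The crux is the direct computation of $\Phi(X_1\otimes a_1)\Phi(X_2\otimes a_2)$, in which three tools conspire. First, Lemma \ref{lemma51} combined with Proposition \ref{homogeneous_rieffel} applied inside $M_n^{\Theta_{1,1}}$ yields $\Psi(X_1)\Psi(X_2) = e^{2\pi i\langle \Theta_{1,1}(p_1),p_2\rangle}\Psi(X_1 X_2)$, accounting for the $\Theta_{1,1}$-phase. Second, the covariance relation $U_z Y = \chi_{p}(z) Y U_z$ for $Y$ homogeneous of order $p \in \mathbb Z^k$ lets one slide the various $U_{\omega(\cdot)}$ factors past $\Psi(X_2)$, producing characters of the form $\chi_{p_2}(\omega(s)) = e^{2\pi i\langle \Theta_{2,1}(p_2),s\rangle}$ for $s \in \mathbb Z^m$. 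Third, after reassembling everything, agreement between both sides reduces to the single identity $\langle \Theta_{1,2}(q_1),p_2\rangle + \langle \Theta_{2,1}(p_2),q_1\rangle = 0$, which is exactly the skew-symmetry of $\Theta$ via $\Theta_{2,1} = -\Theta_{1,2}^T$. The asymmetric factor $U_{\omega(-2q)}$ rather than $U_{\omega(-q)}$ in the definition of $\Phi$ is engineered precisely so that this cancellation happens cleanly; keeping track of the powers of $\omega$ is the main bookkeeping challenge.

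The involution property $\Phi((X\otimes a)^*) = \Phi(X\otimes a)^*$ is a parallel but shorter calculation using that $\Psi$ is a $*$-map (Lemma \ref{lemma51}), $U_z^* = U_{z^{-1}}$, and that $X^*\otimes a^*$ has bidegree $(-p,-q)$. To promote $\Phi$ from a $*$-homomorphism on $\mathcal A$ to a $C^*$-isomorphism, I would exhibit an explicit bijective inverse on bihomogeneous tensors by solving $\alpha_{\omega(-q)}(\Psi(X)) U_{\omega(-2q)} = Y$ for $X$ in terms of $Y$ (using invertibility of $\Psi$ and of the automorphisms $\alpha_z$), and then invoke the functoriality of Rieffel's deformation together with Proposition \ref{double_deformation} applied to the analogous construction with $-\Theta$ to extend the resulting algebraic isomorphism to the $C^*$-completions. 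Conceptually, the formula for $\Phi$ is a cocycle-type twist by the characters $\omega$ that trivializes the off-diagonal blocks of $\Theta$, leaving only the $\Theta_{2,2}$-deformation of the $A$-factor.
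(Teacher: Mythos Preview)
Your algebraic verification of the homomorphism and $*$-properties on bihomogeneous tensors is correct and constitutes a genuinely different route from the paper. The paper does \emph{not} verify $\Phi$ multiplicatively at all: instead it fixes a faithful covariant representation $(\pi,V)$ of $(A,\mathbb T^m,\alpha^A)$, forms the faithful representations $(\mathrm{id}\otimes\pi)^\Theta$ of $(M_n\otimes A)^\Theta$ and $\mathrm{id}\otimes\pi^{\Theta_{2,2}}$ of $M_n\otimes A^{\Theta_{2,2}}$ via Proposition~\ref{theta_rep}, and then exhibits an explicit unitary $W$ on $\mathbb C^n\otimes\mathcal H$ (given by $W(\xi\otimes\eta)=U_{\omega(q)}\xi\otimes\eta$ for $\eta\in\mathcal H_q$) satisfying $W^*(\mathrm{id}\otimes\pi)^\Theta(X\otimes a)W=(\mathrm{id}\otimes\pi^{\Theta_{2,2}})(\Phi(X\otimes a))$. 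The formula for $\Phi$ is thus \emph{derived} from the conjugation, and the $C^*$-isomorphism is automatic because both representations are faithful and unitarily equivalent.

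Your approach has the virtue of explaining conceptually why the particular formula for $\Phi$ is the right cocycle to kill the off-diagonal blocks; the phase bookkeeping you describe is exactly right, and the cancellation via $\Theta_{2,1}=-\Theta_{1,2}^T$ is the heart of the matter. The weak point is the extension step. Your sketch ``explicit algebraic inverse $+$ functoriality $+$ Proposition~\ref{double_deformation}'' is not quite a proof: a $*$-isomorphism between dense $*$-subalgebras of two $C^*$-algebras need not be bounded in either direction, and it is not clear how double deformation with $-\Theta$ resolves this, since $\Phi$ is not an equivariant map between two undeformed algebras to which one could apply functoriality. To close this gap you would either need an independent boundedness argument (e.g.\ using that for periodic actions the Rieffel norm on homogeneous components agrees with the original norm, together with a careful treatment of finite sums), or you would end up passing through a faithful covariant representation anyway---which is precisely the paper's shortcut.
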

\begin{proof}
Let $(\pi,V)$ be a faithful covariant representation of $(A,\mathbb T^m,\alpha^A)$ on $\mathcal H$. Then $(\text{id}\otimes\pi,U\otimes V)$ is a faithful covariant representation of $(M_n\otimes A,\mathbb T^{k+m},\alpha\otimes\alpha^A)$ on $\mathbb C^n\otimes{\mathcal H}$.
Let $(\text{id}\otimes\pi)^\Theta:(M_n\otimes A)^\Theta\to B(\mathbb C^n\otimes\mathcal H)$ and $\pi^{\Theta_{2,2}}:A^{\Theta_{2,2}}\to B(\mathcal H)$ be the $*$-representations defined as in Proposition \ref{theta_rep}.

As $(\text{id}\otimes\pi)^\Theta$ and $\text{id}\otimes \pi^{\Theta_{2,2}}$ are faithful representations of $(M_n\otimes A)^\Theta$ and $M_n\otimes A^{\Theta_{2,2}}$ respectively, to prove the theorem it is enough to show that there exists a unitary operator $W\in B(\mathbb C^n\otimes \mathcal H)$ such that 
\begin{equation}\label{unitary}
W^*(\text{id}\otimes\pi)^{\Theta}(X\otimes a) W=\text{id}\otimes\pi^{\Theta_{2,2}}(\Phi(X\otimes a))
\end{equation}
for all $X\in M_n$, $a\in A_p$ and $p\in\mathbb Z^m$; here $A_p$ is the homogeneous component of order $p$ with respect to $(A,\mathbb T^m,\alpha^A)$.  

Recall the grading on $M_n$ and $A$ which are determined by $(M_n,\mathbb T^k,\alpha)$ and $(A,\mathbb T^m,\alpha^A)$  respectively and the grading on $\mathbb C^n$ and $\mathcal H$ determined by the representations $(U_z)$ and $(V_z)$ of $\mathbb T^k$ and $\mathbb T^m$ respectively.
Let $X\in M_n$ and $a\in A$ be homogeneous of order $p\in\mathbb  Z^k$ and $q\in\mathbb Z^m$ respectively, and let $\xi_1\in \mathbb C^n$ and $\xi_2\in\mathcal H$ be  homogeneous of order $p_1\in \mathbb Z^k$ and $q_1\in\mathbb Z^m$.   
Then $X\otimes a$ is homogeneous of order $(p,q)\in \mathbb Z^k\times\mathbb Z^m$ with respect to $(M_n\otimes A,\mathbb T^{k+m},\alpha\otimes\alpha^A)$ and $\xi_1\otimes\xi_2$ is homogeneous of order $(r,s)\in\mathbb Z^k\times\mathbb Z^m$ with respect to $(U_z\otimes V_z)$. 
Furthermore,  $\alpha_z(X)=\chi_p(z)X$ and $U_z\xi_1=\chi_{p_1}(z)\xi_1$.  Set $b=\left( \begin{array}{c}
    p \\
    q
\end{array} \right)$ and $c= \left( \begin{array}{c}
    r \\
    s
\end{array} \right)$.
Then by Proposition \ref{theta_rep} we have 
\begin{eqnarray*}
&&(\text{id}\otimes\pi)^\Theta(X\otimes a)\xi_1\otimes\xi_2=e^{2\pi i\langle\Theta(b), c\rangle}X\xi_1\otimes \pi(a)\xi_2\\
&& =e^{2\pi i\langle\Theta_{1,1}p,r\rangle} e^{2\pi i\langle\Theta_{2,2}q,s\rangle}
e^{2\pi i\langle\Theta_{2,1}p,s\rangle}
e^{2\pi i\langle\Theta_{1,2}q,r\rangle} X\xi_1\otimes\pi(a)\xi_2\\
&&=e^{2\pi i\langle\Theta_{2,1}p,s\rangle}
e^{-2\pi i\langle\Theta_{2,1}r,q\rangle} \Psi(X)\xi_1\otimes\pi^{\Theta_{2,2}}(a)\xi_2\\
&&= \chi_p(\omega(s))\chi_r(\omega(-q))\Psi(X)\xi_1\otimes\pi^{\Theta_{2,2}}(a)\xi_2\\
&&=\alpha_{\omega(s)}(\Psi(X))U_{\omega(-q)}\xi_1\otimes\pi^{\Theta_{2,2}}(a)\xi_2.
\end{eqnarray*}

Let $W$ be a linear map defined on $\text{span}\{\xi\otimes\eta: \xi\in\mathbb C^n, \eta\in \mathcal H_s, s\in\mathbb Z^m\}\subset \mathbb C^n\otimes\mathcal H$ by letting
$$W(\xi\otimes\eta)=U_{\omega(q)}\xi\otimes\eta, \xi\in\mathbb C^n, \eta\in \mathcal H_q.$$

Any vector $\zeta$ in the span can be written as $\sum_{i=1}^l\xi_i\otimes\eta_i$, where $\xi_i\in\mathbb C^n$ and $\{\eta_i\}_{i=1}^l$ is an orthonormal set in $\mathcal H$ such that $\eta_i\in \mathcal H_{q_i}$, $q_i\in\mathbb Z^m$ ($q_i$ can be equal for different $i$). 
We have  
\begin{eqnarray*}
\langle W\zeta,W\zeta\rangle &=&\left\langle\sum_{i=1}^lU_{\omega(q_i)}\xi_i\otimes\eta_i, \sum_{i=1}^lU_{\omega(q_i)}\xi_i\otimes\eta_i\right\rangle\\&=&\sum_{i=1}^l\langle U_{\omega(q_i)}\xi_i,U_{\omega(q_i)}\xi_i\rangle\langle\eta_i,\eta_i\rangle\\&=&\sum_{i=1}^l\langle\xi_i,\xi_i\rangle\langle\eta_i,\eta_i\rangle=
\langle\zeta,\zeta\rangle,
\end{eqnarray*}
hence $W$ can be extended to an isometry on $\mathbb C^n\otimes \mathcal H$; as the range of $W$ is dense in $\mathbb C^n\otimes \cl H$ it is a unitary operator. 

It is left to see that $W$ satisfies (\ref{unitary}).
For $X$, $a$, $\xi_1$, $\xi_2$ as above, we have
\begin{eqnarray*}
&&W^*(\text{id}\otimes\pi)^{\Theta}(X\otimes a) W\xi_1\otimes\xi_2=W^*(\text{id}\otimes\pi)^{\Theta}(X\otimes a) U_{\omega(s)}\xi_1\otimes\xi_2\\&&=
W^*(\alpha_{\omega(s)}(\Psi(X))U_{\omega(-q)}U_{\omega(s)}\xi_1\otimes\pi^{\Theta_{2,2}}(a)\xi_2\\&&=U_{\omega(s+q)}^*\alpha_{\omega(s)}(\Psi(X))U_{\omega(-q)}U_{\omega(s)}\xi_1\otimes\pi^{\Theta_{2,2}}(a)\xi_2\\&&=
U_{\omega(-q)}\Psi(X)U_{\omega(-q)}\xi_1\otimes\pi^{\Theta_{2,2}}(a)\xi_2\\&&=\alpha_{\omega(-q)}(\Psi(X))U_{\omega(-2q)}\xi_1\otimes\pi^{\Theta_{2,2}}(a)\xi_2\\&&=
\text{id}\otimes\pi^{\Theta_{2,2}}(\Phi(X\otimes a))\xi_1\otimes\xi_2.
\end{eqnarray*}
The result now follows by density arguments.
\end{proof}

We remark that the statements holds true if $M_n$ is replaced by a subalgebra $C$ of $M_n$ such that $\alpha_z(C)=C$ for all $z\in \mathbb T^k$.

\section{$\mathsf{CAR}_{\Theta}$ as Rieffel deformation}
In this section, we will show that our main object  $\mathsf{CAR}_{\Theta}$ can be seen as Rieffel's deformation of a higher dimensional
CAR algebra. 
Recall that the one-dimensional algebra of canonical anti-commutation relations (CAR) is the $*$-algebra  $\mathbb C\langle a,a^*\; |\; a^*a+aa^*=1\rangle$. We will denote its
 universal enveloping $C^*$-algebra by $\mathsf{CAR}_1$; the latter algebra exists
and is isomorphic to a $C^*$-subalgebra of the $C^*$-algebra of all continuous functions on the unit disk $\{z:|z|\leq 1\}$ with values in $M_2$, see e.g. \cite[Theorem 2.2]{CAR1}. Its another realization, which will be convenient for our  purpose, will be described in the next section.  The higher dimensional CAR $C^*$-algebra is given by the tensor product $\mathsf{CAR}_1^{\otimes n}$. Note that  $\mathsf{CAR}_1$ is nuclear, and hence we do not need to specify the $C^*$-tensor product of its copies. 

The $C^*$-algebra $\mathsf{CAR}_1$ has a natural action of $\mathbb T$  given by
\begin{equation}\label{actioncar1}
 \alpha_w(a) = wa, \ w \in \mathbb{T}, 
 \end{equation}
where $a$ is the generator of $\mathsf{CAR}_1$. This $\mathbb T$-action will be always assumed on $\mathsf{CAR}_1$ without mentioning it. It induces the action $\alpha^{\otimes n}$ of $\mathbb T^n$ on the tensor product $\mathsf{CAR}_1^{\otimes n}$ which we also fix through the paper.
For this action each generator $\tilde a_i:=1^{\otimes (i-1)} \otimes a \otimes 1^{\otimes (n - i)}$ is homogeneous of order $\delta_i\in\mathbb Z^n$, where $(\delta_i)_k=\delta_{i,k}$ is the Kronecker delta.

Fix now  a real skew-symmetric matrix $\Theta=(\Theta_{i,j})_{i,j=1}^n$  and recall that $\mathsf{CAR}_{\Theta}$ is the universal  $C^*$-algebra generated by $a_1,\ldots, a_n$ subject to the relations:
\[ a_i^* a_i + a_i a_i^* = 1, \ \]
\[ a_i^* a_j = e^{2 \pi i \Theta_{i,j}} a_j a_i^*, \]
\[ a_i a_j = e^{-2 \pi i \Theta_{i,j}} a_j a_i. \]
We have the following isomorphism between $\mathsf{CAR}_{\Theta}$ and a Rieffel deformation of $\mathsf{CAR}_1^{\otimes n}$: 
\begin{theorem}\label{fiber} Let $\Theta$ be a real skew-symmetric  $n\times n$ matrix. Then 
\[ \mathsf{CAR}_\Theta \simeq (\mathsf{CAR}_1^{\otimes n})^{\frac{\Theta}{2}}. \]
\end{theorem}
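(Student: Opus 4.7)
The plan is to use Proposition \ref{homogeneous_rieffel} to show that the generators $\tilde a_i$ of $\mathsf{CAR}_1^{\otimes n}$, when viewed inside the deformed algebra $(\mathsf{CAR}_1^{\otimes n})^{\Theta/2}$, satisfy the defining relations of $\mathsf{CAR}_\Theta$. Under the action $\alpha^{\otimes n}$ each $\tilde a_i$ is homogeneous of order $\delta_i$, and skew-symmetry gives $\langle(\Theta/2)\delta_i,\delta_j\rangle=-\Theta_{ij}/2$, so Proposition \ref{homogeneous_rieffel} produces
\begin{align*}
\tilde a_i\cdot_{\Theta/2}\tilde a_j &= e^{-\pi i\Theta_{ij}}\tilde a_i\tilde a_j, & \tilde a_j\cdot_{\Theta/2}\tilde a_i &= e^{\pi i\Theta_{ij}}\tilde a_j\tilde a_i,\\
\tilde a_i^*\cdot_{\Theta/2}\tilde a_j &= e^{\pi i\Theta_{ij}}\tilde a_i^*\tilde a_j, & \tilde a_j\cdot_{\Theta/2}\tilde a_i^* &= e^{-\pi i\Theta_{ij}}\tilde a_j\tilde a_i^*.
\end{align*}
For $i\neq j$ the undeformed $\tilde a_i,\tilde a_j,\tilde a_i^*$ commute pairwise, and these combine into $\tilde a_i\cdot_{\Theta/2}\tilde a_j=e^{-2\pi i\Theta_{ij}}\,\tilde a_j\cdot_{\Theta/2}\tilde a_i$ and $\tilde a_i^*\cdot_{\Theta/2}\tilde a_j=e^{2\pi i\Theta_{ij}}\,\tilde a_j\cdot_{\Theta/2}\tilde a_i^*$, while $\Theta_{ii}=0$ gives $\tilde a_i^*\cdot_{\Theta/2}\tilde a_i+\tilde a_i\cdot_{\Theta/2}\tilde a_i^*=1$. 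By the universal property of $\mathsf{CAR}_\Theta$, this supplies a $*$-homomorphism $\varphi:\mathsf{CAR}_\Theta\to(\mathsf{CAR}_1^{\otimes n})^{\Theta/2}$ with $\varphi(a_i)=\tilde a_i$.

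Surjectivity of $\varphi$ is immediate: the image is closed (as the image of a $*$-homomorphism between $C^*$-algebras), contains every $\tilde a_i$, hence contains the $\cdot_{\Theta/2}$-$*$-subalgebra they generate, which is dense in $(\mathsf{CAR}_1^{\otimes n})^{\Theta/2}$. For injectivity I would run the symmetric construction: equip $\mathsf{CAR}_\Theta$ with its canonical $\mathbb T^n$-action $a_i\mapsto w_i a_i$ (well-defined by universality) and apply Proposition \ref{homogeneous_rieffel} with matrix $-\Theta/2$ to verify that, inside $(\mathsf{CAR}_\Theta)^{-\Theta/2}$, the $a_i$ pairwise commute and satisfy $a_i^*\cdot_{-\Theta/2}a_i+a_i\cdot_{-\Theta/2}a_i^*=1$. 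Since $\mathsf{CAR}_1$ is nuclear, $\mathsf{CAR}_1^{\otimes n}$ is the universal $C^*$-algebra for $n$ pairwise commuting CAR generators, yielding an equivariant $*$-homomorphism $\psi:\mathsf{CAR}_1^{\otimes n}\to(\mathsf{CAR}_\Theta)^{-\Theta/2}$ with $\psi(\tilde a_i)=a_i$. Deforming $\psi$ and invoking Proposition \ref{double_deformation} gives $\psi^{\Theta/2}:(\mathsf{CAR}_1^{\otimes n})^{\Theta/2}\to\mathsf{CAR}_\Theta$, which as a set map on the dense smooth $*$-subalgebra coincides with $\psi$ and so sends $\tilde a_i\mapsto a_i$. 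Consequently $\psi^{\Theta/2}\circ\varphi$ and $\varphi\circ\psi^{\Theta/2}$ are identities on dense subalgebras, proving $\varphi$ is an isomorphism.

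I expect the main point requiring care to be the phase bookkeeping, which hinges on two independent facts conspiring: the factor $1/2$ in the deformation parameter and the skew-symmetry of $\Theta$ together force the pairings for $a\cdot_{\Theta/2}b$ and $b\cdot_{\Theta/2}a$ to \emph{add} rather than cancel, producing the full $e^{\pm 2\pi i\Theta_{ij}}$ appearing in the $\mathsf{CAR}_\Theta$ relations; had the deformation parameter been $\Theta$ instead of $\Theta/2$ the resulting relations would not be correct. The secondary subtlety is the careful sign-tracking when $\tilde a_i^*$ (homogeneous of order $-\delta_i$) replaces $\tilde a_i$, and the invocation of nuclearity of $\mathsf{CAR}_1$ to justify the universal property of $\mathsf{CAR}_1^{\otimes n}$ needed to construct the inverse map $\psi$.
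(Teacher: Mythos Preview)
Your proof is correct and follows essentially the same route as the paper: construct $\varphi$ by checking the $\mathsf{CAR}_\Theta$ relations via Proposition~\ref{homogeneous_rieffel}, get surjectivity from density, then build the reverse map $\psi:\mathsf{CAR}_1^{\otimes n}\to(\mathsf{CAR}_\Theta)^{-\Theta/2}$ and use Proposition~\ref{double_deformation}. The only cosmetic difference is in the final step: the paper observes that $\varphi^{-\Theta/2}\circ\Psi=\id$ together with surjectivity of $\Psi$ forces $\varphi^{-\Theta/2}$ (and hence $\varphi$) to be injective, whereas you deform $\psi$ to $\psi^{\Theta/2}$ and verify both compositions are the identity on generators---these are equivalent packagings of the same idea.
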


\begin{proof}
Consider
\[ \varphi : \mathsf{CAR}_\Theta \rightarrow (\mathsf{CAR}_1^{\otimes n})^{\frac{\Theta}{2}}, \ \varphi(a_i) = 1^{\otimes (i-1)} \otimes a \otimes 1^{\otimes (n - i)}:=\tilde a_i, \ i=1,\ldots, n. \]
We shall see first that $\varphi$ extends to a well-defined $*$-homomorphism. 
As $\tilde a_k$  and $\tilde a_k^*$ are homogeneous of order $\delta_k\in\mathbb Z^n$ and $-\delta_k$ respectively, by Proposition \ref{homogeneous_rieffel} 
\begin{equation*}
    \begin{split}
        \varphi(a_k) \cdot_{\frac{\Theta}{2}} \varphi(a_k)^* & = e^{-\pi i \langle \Theta (\epsilon_k),  \epsilon_k \rangle} \tilde a_k\tilde a_k^*=
        \\ & = 1^{\otimes (k-1)} \otimes aa^* \otimes 1^{\otimes (n - k)} = \varphi(a_ka^*_k). 
    \end{split}
\end{equation*}
Similarly, $\varphi(a_k)^* \cdot_{\frac{\Theta}{2}} \varphi(a_k)=\varphi(a_k^*a_k)$ and hence
$\varphi(a_k)^*\cdot_{\frac{\Theta}{2}}\varphi(a_k)+\varphi(a_k)\cdot_{\frac{\Theta}{2}}\varphi(a_k)^*=1$.

If $k< m$  then 
\begin{equation*}
    \begin{split}
        \varphi(a_k) \cdot_{\frac{\Theta}{2}} \varphi(a_m) & = e^{ \pi i \langle \Theta (\epsilon_k),  \epsilon_m \rangle}\tilde a_k\tilde a_m
        \\ & = e^{- \pi i \Theta_{k,m}} 1^{\otimes (k-1)} \otimes a \otimes 1^{\otimes (m - k - 1)} \otimes a \otimes 1^{\otimes (n - m)}. 
    \end{split}
\end{equation*}
\begin{equation*}
    \begin{split}
        \varphi(a_m) \cdot_{\frac{\Theta}{2}} \varphi(a_k) & = e^{ \pi i \langle \Theta (\epsilon_m), \epsilon_k \rangle}\tilde a_m\tilde a_k
        \\ & = e^{\pi i  \Theta_{k,m}} 1^{\otimes (k-1)} \otimes a \otimes 1^{\otimes {m - k - 1}} \otimes a \otimes 1^{\otimes (n - m)}. 
    \end{split}
\end{equation*}
Thus
\[ \varphi(a_k) \cdot_{\frac{\Theta}{2}} \varphi(a_m) = e^{-2\pi i  \Theta_{k,m}} \varphi(a_m) \cdot_{\frac{\Theta}{2}} \varphi(a_k). \]
Similar calculations give 
\[ \varphi(a_k)^* \cdot_{\frac{\Theta}{2}} \varphi(a_m) = e^{2\pi i  \Theta_{k,m}} \varphi(a_m) \cdot_{\frac{\Theta}{2}} \varphi(a_k)^*, \]
so $\varphi$ extends to a $*$-homomorphism.

 $\varphi$ is surjective as the $*$-algebra generated by $\tilde a_i$, $i=1,\ldots, n$, is dense in $(\mathsf{CAR}_1^{\otimes n})^{\frac{\Theta}{2}}$.

$\mathsf{CAR}_\Theta$ has also a natural $\mathbb T^n$-action determined by $\alpha_w(a_i)=w_ia_i$, $w=(w_1,\ldots,w_n)\in\mathbb T^n$ and hence we can talk about its Rieffel deformation $(\mathsf{CAR}_\Theta)^{-\frac{\Theta}{2}}$.  For $\tilde a_k\in \mathsf{CAR}_1^{\otimes n}$, $k=1,\ldots,n$, as above consider the map
\[ \Psi : \mathsf{CAR}_1^{\otimes n}\rightarrow (\mathsf{CAR}_\Theta)^{-\frac{\Theta}{2}}, \ \Psi(\tilde a_k) = a_k, k=1,\ldots, n. \]
As $a_k$ and $a_k^*\in \mathsf{CAR}_\Theta$ are homogeneous of order $\delta_k$ and $-\delta_k\in\mathbb Z^n$ respectively, as above we obtain that 
\begin{equation*}
    \begin{split}
        & \Psi(\tilde a_k) \cdot_{-\frac{\Theta}{2}} \Psi(\tilde a_m) = {e^{\pi i \Theta_{k,m} }} a_ka_m  = \\ & = { e^{- \pi i \Theta_{k,m} }} a_ma_k = \Psi(\tilde a_m) \cdot_{-\frac{\Theta}{2}}  \Psi(\tilde a_k) .
    \end{split}
\end{equation*}
In a similar way we get $\Psi(\tilde a_k)^*\cdot_{-\frac{\Theta}{2}}\Psi(\tilde a_m)=\Psi(\tilde a_m)\cdot_{-\frac{\Theta}{2}}\Psi(\tilde a_k)^*$, $m\ne k$ and $\Psi(\tilde a_k)^*\cdot_{-\frac{\Theta}{2}}\Psi(\tilde a_k)+\Psi(\tilde a_k)\cdot_{-\frac{\Theta}{2}}\Psi(\tilde a_k)^*=1$. 
Thus $\Psi$ extends to a  $*$-homomorphism. It is clearly surjective.
Moreover, one has the following commutative diagram
\[
\begin{tikzcd}
(\mathsf{CAR}_\Theta)^{-\frac{\Theta}{2}} \arrow[r, "\varphi^{-\frac{\Theta}{2}}"] & ((\mathsf{CAR}_1^{\otimes n})^{\frac{\Theta}{2}})^{-\frac{\Theta}{2}} \\
\mathsf{CAR}_1^{\otimes n} \arrow[u, "\Psi"] \arrow[ur, equal, "\id"]
\end{tikzcd}
\]
Since $\Psi$ is surjective, $\varphi^{-\frac{\Theta}{2}}$ is injective. Therefore, by Proposition \ref{theta_rep} $\varphi$ is injective  too. 

\end{proof}

\section{The $C^*$-algebra $\mathsf{CAR}_1$}
In this section we recall the representation theory of the one-dimensional CAR $*$-algebra,  and describe its
 universal enveloping $C^*$-algebra as a subalgebra of $C(\left[0,\frac{1}{2}\right], M_2(C(\mathbb T))$, showing that it has a $C(\left[0,\frac{1}{2}\right])$-structure and that the action  $\alpha$ of $\mathbb T$ on $\mathsf{CAR}_1$ defined by (\ref{actioncar1}) is fiberwise. 
 
\subsection{Representation theory of $\mathsf{CAR}_1$}
We will use the following  classification of irreducible representations of CAR up to unitary equivalence:

\begin{itemize}
\item 2-dimensional:
$$\pi_{x,\varphi}(a)=e^{i\varphi}\left(\begin{array}{cc} 0&\sqrt{x}\\\sqrt{1-x}&0\end{array}\right), x\in \left[0,\frac{1}{2}\right), \varphi\in [0,\pi),$$
\item 1-dimensional:
$$\rho_\varphi(a)=\frac{e^{i\varphi}}{\sqrt{2}}, \varphi\in[0,2\pi).$$
\end{itemize}
\begin{remark}\rm 
These representations are unitary equivalent to the representations given in \cite{CAR1}.

We note also that
$$\pi_{x,\varphi}(a)= W^*\pi_{x,\varphi-\pi}(a) W, x\in \left[0,\frac{1}{2}\right), \varphi\in [\pi,2\pi),$$
where
$W= \left(\begin{array}{cc} 1&0\\0&-1\end{array}\right)$,
$$\pi_{0,1}(a)= W(\varphi)^*\pi_{0,\varphi}(a) W(\varphi),  \varphi\in [\pi,2\pi),$$
where $W(\varphi)=\left(\begin{array}{cc} 1&0\\0&e^{i\varphi}\end{array}\right)$
and
 $$\pi_{\frac{1}{2},\varphi}(a)=\frac{e^{i\varphi}}{\sqrt{2}}\left(\begin{array} {cc} 0&1\\1&0\end{array}\right)=V\frac{e^{i\varphi}}{\sqrt{2}}\left(\begin{array}{cc} 1&0\\0&-1\end{array}\right)V^*,$$
where $V=\frac{1}{\sqrt{2}}\left(\begin{array}{cc} 1&1\\1&-1\end{array}\right)$.
Hence any one-dimensional irreducible representation can be obtained by decomposing $\pi_{\frac{1}{2},\varphi}$, $\varphi\in[0, \pi)$,  into irreducible ones. Also one has
\[ \pi_{\frac{1}{2}, \varphi} = W \pi_{\frac{1}{2}, \varphi+\pi} W^*, \varphi\in [0,\pi). \]
\end{remark}

\subsection{Spatial picture of $\mathsf{CAR}_{1}$}\label{subsection6.2}

In order to describe the universal enveloping  $C^*$-algebra $\mathsf{CAR}_{1}$,
we recall the following version of the  Stone-Weierstrass-Glimm Theorem, see e.g. \cite[Theorem 1.4]{F1}, \cite[Section 3]{V}.

\begin{theorem}\label{gsw}
Let $Y$ be a compact Hausdorff space and let $A\subseteq B$ be
sub\-algeb\-ras of $C(Y,M_n)$. For every pair ($y_1,
y_2$) of points in $Y$  define
$A(y_1,y_2)$  as %
\[
A(y_1,y_2):=\{(f(y_1),f(y_2))\in M_n\times M_n
\ | f\ \in A\},
\]
and similarly $B(y_1,y_2)$. 
Then
\[
A=B\Leftrightarrow A(y_1,y_2)=B(y_1,y_2),\forall y_1,y_2\in Y.
\]
\end{theorem}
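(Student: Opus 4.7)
The direction $(\Rightarrow)$ is trivial. For the converse, assume that $A(y_1,y_2) = B(y_1,y_2)$ for all pairs. Specializing to $y_1 = y_2 = y$ already gives the fiberwise equality $A(y) = B(y)$ in $M_n$ for each $y \in Y$. The plan is to show $B \subseteq A$ by uniform approximation: given $b \in B$ and $\varepsilon > 0$, I would construct $a \in A$ with $\|a - b\|_\infty < \varepsilon$, which suffices since subalgebras of $C(Y,M_n)$ relevant here are uniformly closed.

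For each pair $(y_1, y_2)$ the joint-evaluation hypothesis produces $a_{y_1,y_2} \in A$ with $a_{y_1,y_2}(y_i) = b(y_i)$ for $i = 1, 2$. By continuity there is an open neighbourhood $U_{y_1,y_2} \subseteq Y \times Y$ of $(y_1,y_2)$ on which each coordinate evaluation of $a_{y_1,y_2}$ is within $\varepsilon$ of the corresponding value of $b$. Compactness of $Y \times Y$ reduces this cover to a finite one, producing finitely many local interpolants $a_1, \ldots, a_N \in A$ whose pairs of values approximate $b$ on the corresponding neighbourhoods.

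The heart of the argument is gluing these interpolants into one global element of $A$ via a partition of unity. The required scalar-valued partition functions $\varphi_i \cdot I_n$ are shown to lie in $A$ by the following observation: the hypothesis forces $C(Y) \cdot I_n \cap B$ to be contained in $A$, because for any scalar central element the pair condition at $(y_1,y_2)$ is automatically satisfied, and one can then invoke the classical Stone--Weierstrass theorem applied to the abelian center. With such scalar multipliers available in $A$, one assembles $a := \sum_i \varphi_i \, a_i \in A$ and verifies $\|a - b\|_\infty < \varepsilon$ by a direct estimate.

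The main obstacle I expect is correctly handling the partition of unity in the matrix-valued setting. Unlike the scalar Stone--Weierstrass argument, where data is indexed by points of $Y$, here the natural data is indexed by pairs $(y_1, y_2) \in Y \times Y$, and translating two-point information into a usable open cover of $Y$ requires a refinement in which one first fixes one coordinate and varies the other, then iterates, ultimately reducing the pair condition to a workable partition on $Y$ itself. Once the partition is in place, the matrix-entry estimates reduce to routine triangle inequalities.
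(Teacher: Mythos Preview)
The paper does not give its own proof of this statement; it is quoted from the literature (Fell \cite{F1}, Vasil'ev \cite{V}) and used as a black box. So there is no in-paper argument to compare with, and the relevant question is whether your sketch stands on its own.

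There is a genuine gap, and it is exactly at the step you flag as ``the main obstacle'': obtaining partition-of-unity functions \emph{inside $A$}. Your justification is that ``the hypothesis forces $C(Y)\cdot I_n\cap B$ to be contained in $A$'' and that classical Stone--Weierstrass on the abelian center then supplies the needed scalars. Neither part goes through. First, nothing in the hypotheses guarantees that $B$ (and hence $A$) contains \emph{any} nonzero scalar function, let alone enough of them to form a partition of unity subordinate to an arbitrary open cover of $Y$; the two-point condition only matches \emph{pairs of values}, it does not produce central elements. Second, to invoke scalar Stone--Weierstrass on the center of $A$ you would need the center to separate points of $Y$ and to be closed under conjugation, and the two-point hypothesis on $A$ and $B$ gives you none of that. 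In short, the gluing device $\sum_i\varphi_i\,a_i$ is simply not available in this generality.

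Even granting scalars, the ``fix one coordinate, vary the other, then iterate'' reduction is where the scalar Stone--Weierstrass proof uses the lattice operations $\max/\min$ in an essential way; for matrix-valued functions there is no lattice structure, and a convex combination $\sum_i\varphi_i a_i$ does not play the same role without further argument. This is precisely why the results you are citing (Glimm, Fell, Vasil'ev) do \emph{not} proceed by partitions of unity: the standard route is representation-theoretic --- one shows that $A$ and $B$ have the same pure states (equivalently, the same irreducible representations and intertwiners), and then appeals to the fact that a $C^*$-subalgebra separating the pure state space of the ambient algebra must equal it. If you want to repair your approach, that is the direction to look; the partition-of-unity outline, as written, cannot be completed.
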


For representations $\pi_1$, $\pi_2$ of a $*$-algebra
$\mathcal{A}$ on Hilbert spaces $\mathcal{H}(\pi_1)$ and
$\mathcal{H}(\pi_2)$ respectively, we write $\mathsf{Hom}(\pi_1,\pi_2)$ for the
space of intertwining operators
\[
\mathsf{Hom}(\pi_1, \pi_2)=\{c\in
B(\mathcal{H}(\pi_2),\mathcal{H}(\pi_1)):\pi_1(a) c=c\pi_2(a),\
a\in\mathcal{A}\}.
\]
We remark that $\mathsf{Hom}(\pi_1,\pi_2)=\{0\}$ iff $\pi_1$, $\pi_2$ are
disjoint, i.e. $\pi_1$, $\pi_2$ do not have unitary equivalent
sub-representations.

For a $*$-algebra $\mathcal{A}\subset B(\mathcal{H})$ we denote by
$\mathcal{A}^\prime$ its commutant, i.e.
\[
\mathcal{A}^\prime=\{c\in B(\mathcal{H}):\ ca=ac,\
a\in\mathcal{A}\}.
\]

\medskip

Let also $W(z)=\left(\begin{array}{cc} 1&0\\0&z\end{array}\right)$, $z\in \mathbb T$, and retain the unitaries $W$ and $V$ from the previous subsection; in particular, $W=W(-1)$. Write $D_2\subset M_2$ for the subalgebra of diagonal matrices.  

\smallskip{}

Let $h:\mathsf{CAR}_{1}\to C\left(\left[0, \frac{1}{2} \right], M_2(C(\mathbb T)\right)$ be a $*$-homomorphism given on the generator $a\in \mathsf{CAR}_{1}$ by
\begin{equation}\label{image}
h(a)(x)(z)=z\left(\begin{array}{cc} 0&\sqrt{x}\\\sqrt{1-x}&0\end{array}\right), \ x \in \left[0, \frac{1}{2} \right], \ z \in \mathbb{T}.
\end{equation}
The next proposition gives a desired realization of
$\mathsf{CAR}_{1}$.

\begin{proposition}\label{prop63}
$\mathsf{CAR}_{1}$ is isomorphic to the $C^*$-algebra
\begin{equation}\label{algebrab}
\mathcal B=\left\{f\in C\left(\left[0, \frac{1}{2} \right], M_2(C(\mathbb T))\right): \begin{array}{l} f(x)(z)=Wf(x)(-z)W^*,\\
x\in \left( 0,\frac{1}{2} \right),\\
f(0)(z)=W(z)f(0)(1)W^*(z),  \\
  f(\frac{1}{2})\in V(D_2\otimes C(\mathbb T))V^*, \\ f(\frac{1}{2})(z) = Wf(\frac{1}{2})(-z)W^*, \\  \forall z\in\mathbb T \end{array}  \right\},
  \end{equation}
 with the map $h$ that implements the $*$-isomorphism. 
\end{proposition}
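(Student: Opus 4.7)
The proof has four parts: that $h$ extends to a $*$-homomorphism of $\mathsf{CAR}_1$, that its image lies in $\mathcal B$, that $h$ is isometric, and that its image equals $\mathcal B$. The first is immediate from the universal property once one checks on the generator that $h(a)^*h(a)+h(a)h(a)^* = \operatorname{diag}(1-x,x) + \operatorname{diag}(x,1-x) = \mathbf 1$. For the second, the three defining conditions of $\mathcal B$ are elementary $2\times 2$ matrix identities on $h(a)$ (the endpoint condition at $x=1/2$ uses that $V D_2 V^* = \operatorname{span}\{\mathbf 1, \sigma_x\}$ where $\sigma_x = h(a)(1/2)(1)\cdot\sqrt 2$), and they propagate from the generator to all of $h(\mathsf{CAR}_1)$ by linearity, multiplicativity, and the involution.

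For isometricity, the formula (\ref{image}) gives $\pi_{x,\varphi} = \mathrm{ev}_{(x, e^{i\varphi})} \circ h$ for every two-dimensional irreducible $\pi_{x,\varphi}$, and the one-dimensional irreducibles $\rho_\varphi$ appear as subrepresentations of $\pi_{1/2, \varphi}$ by the remark. Hence every irreducible representation of $\mathsf{CAR}_1$ factors through $h$, and since the universal $C^*$-norm is the supremum over irreducible representations, it coincides with $\|h(\cdot)\|$; thus $h$ is isometric and injective.

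The bulk of the argument is the surjectivity $h(\mathsf{CAR}_1) = \mathcal B$, for which I would invoke Theorem \ref{gsw} with $Y = [0, 1/2]\times \mathbb T$ and $n = 2$. This reduces the task to verifying $h(\mathsf{CAR}_1)(y_1, y_2) = \mathcal B(y_1, y_2)$ for each pair $y_1, y_2 \in Y$, handled by case analysis on whether each $x_i$ lies in $(0, 1/2)$, equals $0$, or equals $1/2$. When both $x_i \in (0, 1/2)$, the evaluations yield two-dimensional irreducibles $\pi_{x_i, \arg z_i}$ which are unitarily equivalent iff $x_1 = x_2$ and $z_2 = \pm z_1$; the intertwiner is $W$ in the $-1$ case, matching the $z \mapsto -z$ constraint of $\mathcal B$, and in the disjoint cases the pair-image fills $M_2 \oplus M_2$ by a double-commutant argument, matching the absence of a relating constraint in $\mathcal B$. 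At the endpoints, the condition $f(0,z) = W(z) f(0,1) W(z)^*$ records precisely the equivalence $\pi_{0, \arg z} \simeq W(z) \pi_{0, 1} W(z)^*$, and the condition $f(1/2) \in V(D_2 \otimes C(\mathbb T)) V^*$ records that $\pi_{1/2, \varphi}$ has image exactly $V D_2 V^*$; mixed endpoint-interior and endpoint-endpoint pairs are then handled by disjointness together with the observation that the $\mathcal B$-constraints are localised at each fixed $x$.

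The principal obstacle is this case analysis, in particular keeping track of the interplay between the constraints of $\mathcal B$ and the equivalence/disjointness behaviour of the family $\{\pi_{x,\varphi}\}$ at the endpoints; once the intertwiners at $x=0$ and $x=1/2$ are correctly identified, every case of Theorem \ref{gsw} is matched directly by the corresponding relation in the definition of $\mathcal B$.
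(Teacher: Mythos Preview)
Your proposal is correct and follows essentially the same approach as the paper: both reduce surjectivity to the Stone--Weierstrass--Glimm criterion (Theorem \ref{gsw}) and verify the pairwise condition by matching the unitary-equivalence/disjointness structure of the family $\{\pi_{x,z}\}$ against the defining constraints of $\mathcal B$. The paper organizes the case analysis slightly differently---it passes to commutants, showing $h(\mathcal A)(y_1,y_2)' = \mathcal B(y_1,y_2)'$, and reduces everything to the single-point inclusion $\pi_{x,z}(\mathcal A)' \subset \mathcal B(x,z)'$---but the underlying argument is the same as yours.
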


\begin{proof} Set $\mathcal A=\mathsf{CAR}_{1}$.
Observe first that $h(a)(x)(z)=\pi_{x,\varphi}(a)$ for $z=e^{i\varphi}$; hence $h$ extends to a $*$-homomorphism of $\mathsf{CAR}_{1}$. Moreover, it is easy to see that the image  $h(\mathcal A)$ is in $\mathcal B$.  It follows from the classification of irreducible representations of $\mathcal A$ that
$h:\mathcal A\to \mathcal B$ given by  (\ref{image}) is an isometry. Hence it is sufficient to see that
$h$ is surjective.  Considering  $\mathcal B$ as a $C^*$-subalgebra of $C\left(\left[0, \frac{1}{2} \right]\times\mathbb T, M_2\right)$, by Theorem \ref{gsw}, it is enough to show that for pairs $(x_1,x_2)\in \left[0, \frac{1}{2} \right]^2$ and $(z_1,z_2)\in\mathbb T^2$, $h(\mathcal A)((x_1,z_1),(x_2,z_2))=\mathcal B((x_1,z_1), (x_2,z_2))$.
We will follow the same scheme as in \cite[Theorem 2.2]{CAR1} and prove the equality of the commutants
$h(\mathcal A)((x_1,z_1),(x_2,z_2))'=\mathcal B((x_1,z_1),(x_2,z_2))'$.  
For the notation simplicity we will write $\pi_{x,z}$ instead of $\pi_{x,\varphi}$ if $z=e^{i\varphi}$.

Consider the equivalence relation on
$\left[0, \frac{1}{2} \right] \times\mathbb T$ defined as follows
$$(x_1,z_1)\sim (x_2,z_2), \text{ if either } x_1=x_2 \text{ and } z_1=\pm z_2 \text{ or } x_1=x_2=0$$
and note that $\pi_{x_1,z_1}$ and $\pi_{x_2,z_2}$ are disjoint and hence $\mathsf{Hom}(\pi_{x_1,z_1}, \pi_{x_2,z_2})=\{0\}$ when $(x_1,z_1)\not\sim (x_2,z_2)$.
Therefore,  assuming $(x_1,z_1)\not\sim (x_2,z_2)$ we obtain that
$$h(\mathcal A)((x_1,z_1),(x_2,z_2))'=h(\mathcal A)(x_1)(z_1)'\oplus h(\mathcal A)(x_2)(z_2)'.
$$
As $h(\mathcal A)\subset\mathcal B$,
$\mathcal B((x_1,z_1), (x_2,z_2)'=\mathcal B(x_1)(z_1)'\oplus \mathcal B(x_2)(z_2)'$.

If $x_1=x_2=0$, then $\pi_{x_i,z_i}(b)=W(z_i)\pi_{0,1}(b)W(z_i)^*$, $b\in\mathcal A$,  and one easily  gets that
$$h(\mathcal A)((x_1,z_1),(x_2,z_2))'=\{(\Lambda_{ij}): W(z_i)\Lambda_{ij}W(z_j)^*\in \pi_{0,1}(\mathcal A)', i,j=1,2\}$$
and
$$\mathcal B((x_1,z_1), (x_2,z_2))'=\{(\Lambda_{ij}): W(z_i)\Lambda_{ij}W(z_j)^*\in \mathcal B(0,1)', i,j=1,2\}.$$

If $x_1=x_2$ and $z_1=-z_2$, similarly we obtain that
$$h(\mathcal A)((x_1,z_1),(x_2,z_2))'=\{(\Lambda_{ij}): \Lambda_{i1}, W^*\Lambda_{1j}W\in \pi_{x_1,z_1}(\mathcal A)', i,j=1,2\}$$
and
$$\mathcal B((x_1,z_1), (x_2,z_2))'=\{(\Lambda_{ij}): \Lambda_{i1}, W^*\Lambda_{1j}W\in \mathcal B(x_1,z_1)', i,j=1,2\}.$$

If $x_1=x_2$ and $z_1=z_2$, then $$h(\mathcal A)((x_1,z_1),(x_2,z_2))'=(I\otimes h(\mathcal A)(x_1,z_1))'=M_2\otimes  (h(\mathcal A)(x_1,z_1))'$$  and similarly
$\mathcal B((x_1,z_1), (x_2,z_2))'=M_2\otimes  (\mathcal B(x_1,z_1))'$.
Therefore to prove the statement it is enough to see that
\begin{equation}\label{incl}
\pi_{x,z}(\mathcal A)'\subset \mathcal B(x,z)'
\end{equation}
for any $(x,z)\in \left[0, \frac{1}{2} \right] \times\mathbb T$.

We consider two cases: $x\ne \frac{1}{2}$ and $x=\frac{1}{2}$.

\smallskip
Case 1. $x\ne \frac{1}{2}$. In this case  $\pi_{x,z}$ is irreducible and hence $\pi_{x,z}(\mathcal A)'=\mathbb C I_2$, the inclusion (\ref{incl}) holds trivially.

\smallskip

{\it Case 2.} $x=\frac{1}{2}$. In this case we have that $C\in  \pi_{x,z}(\mathcal A)'$ if and only if
$V^*CV\in D$, where $D$ is the subalgebra of the diagonal matrices. it follows from the definition of $\mathcal B$ that any such $C$ is in $\mathcal B\left(\frac{1}{2},z\right)$. This completes the proof.

\end{proof}

\subsection{$\mathsf{CAR}_1$ as a fixed point subalgebra}
It will be useful to see $\mathsf{CAR}_1$ as a fixed point subalgebra of a larger $C^*$-subalgebra of $C\left(\left[0, \frac{1}{2} \right], M_2(C(\mathbb T))\right)$ with a $\mathbb Z_2$-action defined on it. 

Given a $C^*$-algebra $\mathcal A$, the $C^*$-algebra $C(\mathbb T,\mathcal A)$ has a natural $\mathbb T$-action 
which will be always denoted by $\beta$:
$$\beta_w(f)(z)=f(wz), f\in C(\mathbb T,\mathcal A), z\in \mathbb T, w\in\mathbb T.$$\
Considering $\mathbb Z_2$ as the subgroup $\{1,-1\}$ of $\mathbb T$, we shall denote by $\beta$ also the restriction of it to $\mathbb Z_2$.

We keep the notation of subsection \ref{subsection6.2} and consider the $C^*$-algebra
$$\mathcal C=\left\{f\in C\left(\left[ 0, \frac{1}{2} \right], M_2(C(\mathbb T))\right): \begin{array}{l} 
f(0)(z)=W(z)f(0)(1)W^*(z),  \\
  f(\frac{1}{2})\in V(D_2\otimes C(\mathbb T))V^*, \\  \forall z\in\mathbb T\end{array}  \right\}.$$
Let $\sigma$ be the action of $\mathbb{Z}_2$ on $M_2(C(\mathbb{T})) \simeq M_2 \otimes C(\mathbb{T})$ given by
\[\sigma_w = \mathsf{Ad}(W(w)) \otimes \beta_{w}, w\in\mathbb Z_2, \]
where we write $\mathsf{Ad}(v)$ for the inner automorphism $T\mapsto vTv^*$ of  $M_2$. 
Let $\Sigma$ be the action of $\mathbb{Z}_2$ on $\mathcal{C}$ given by
\[ \Sigma_w(f)(x) = \sigma_w(f(x)), \ f \in \mathcal{C}, w\in \mathbb Z_2 \]
\begin{theorem}\label{algebrac} Let $\mathcal B$ be the $C^*$-algebra given by (\ref{algebrab}). Then 
\[ \mathcal{B} \simeq \mathcal{C}^\Sigma. \]
\end{theorem}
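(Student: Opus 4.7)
The plan is to show that $\mathcal{B}$ and $\mathcal{C}^\Sigma$ coincide as subalgebras of $C([0,\frac{1}{2}], M_2(C(\mathbb{T})))$, with the identity map providing the isomorphism. Unpacking the definitions, $\Sigma_{-1}(f)(x)(z) = W f(x)(-z) W^*$ (here $W=W(-1)$), so membership in $\mathcal{C}^\Sigma$ amounts to the combination of the two boundary conditions defining $\mathcal{C}$ together with the pointwise identity $f(x)(z) = W f(x)(-z) W^*$ for all $x\in[0,\frac{1}{2}]$ and $z\in\mathbb{T}$.

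First I would check that $\Sigma$ actually preserves $\mathcal{C}$, so that $\mathcal{C}^\Sigma$ makes sense. At $x=0$, using the relation $W W(-z) = W(z) W$ in $M_2$, a direct computation shows
\[ \Sigma_{-1}(f)(0)(z) = W W(-z) f(0)(1) W(-z)^* W^* = W(z) f(0)(1) W(z)^*, \]
and the same equality with $z$ replaced by $1$ gives $\Sigma_{-1}(f)(0)(1) = f(0)(1)$; hence $\Sigma_{-1}(f)$ still satisfies the boundary condition at $0$. At $x=\frac{1}{2}$, the key observation is the matrix identity $V^* W V = \tau$, where $\tau=\bigl(\begin{smallmatrix}0&1\\1&0\end{smallmatrix}\bigr)$. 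Thus conjugation by $W$ sends $V D_2 V^*$ to itself (it merely swaps the two diagonal entries), and coupling with $\beta_{-1}$ on the $C(\mathbb{T})$-factor leaves $V(D_2\otimes C(\mathbb{T}))V^*$ invariant.

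Next I would verify the two inclusions of $\mathcal{B} = \mathcal{C}^\Sigma$. The inclusion $\mathcal{B}\subseteq \mathcal{C}^\Sigma$ is immediate on $(0,\frac{1}{2})$ and at $\frac{1}{2}$ from conditions 1 and 4 defining $\mathcal{B}$; at $x=0$ the invariance $W f(0)(-z) W^* = f(0)(z)$ follows from condition~2 of $\mathcal{B}$ via exactly the computation done in the previous paragraph (alternatively by continuity from $(0,\frac{1}{2})$). Conversely, for $f\in \mathcal{C}^\Sigma$, membership in $\mathcal{C}$ provides conditions 2 and 3 of $\mathcal{B}$, while the $\Sigma$-invariance on all of $[0,\frac{1}{2}]$ restricts to conditions 1 and 4.

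There is no serious obstacle here; the whole argument is a direct unwinding of definitions. The only calculation requiring care is the verification that $\mathrm{Ad}(W)$ preserves $V D_2 V^*$, which is settled once one writes down $V^* W V = \tau$.
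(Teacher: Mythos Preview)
Your proposal is correct and follows essentially the same approach as the paper: both arguments identify $\mathcal{B}$ with $\mathcal{C}^\Sigma$ as the same subalgebra of $C([0,\tfrac12],M_2(C(\mathbb{T})))$, with the key computation being that the boundary condition at $x=0$ already forces $\Sigma$-invariance there (the paper records only this computation and declares the rest evident). Your write-up is in fact more complete than the paper's, since you also verify explicitly that $\Sigma$ preserves the $x=\tfrac12$ condition via the identity $V^*WV=\bigl(\begin{smallmatrix}0&1\\1&0\end{smallmatrix}\bigr)$, a point the paper leaves to the reader.
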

\begin{proof}
The only condition to be checked is that the $0$-fiber is stable under $\Sigma$:
\begin{equation*}
    \begin{split}
        \Sigma_{-1}(f)(0)(z) & = W(-1) f(0)(-z) W(-1)^* = \\ & 
                          = W(-1) W(-z) f(0)(1) W(-z)^* W(-1)^* = \\ &
                          = W(z) f(0)(1) W(z)^* = \\ &
                          = f(0)(z).
    \end{split}
\end{equation*}
\end{proof}

\subsection{$C\left(\left[0,\frac{1}{2}\right]\right)$-structure on $\mathsf{CAR}_1$.}\label{taction}
Let $\mathcal B$ be as in Proposition \ref{prop63}. 
As $\mathsf{CAR}_1\simeq \mathcal B$ it has a $C\left(\left[0,\frac{1}{2}\right]\right)$-action induced by the natural  $C\left(\left[0,\frac{1}{2}\right]\right)$-action on $\mathcal B$ given by 
\begin{equation}\label{cstructure}
 \Phi(g)(f)(x) = g(x)f(x), g\in C\left(\left[0,\frac{1}{2}\right]\right),  f\in\mathcal B, 
 \end{equation}
so that $\mathsf{CAR}_1(x)\simeq \cl B(x)$ with the isomorphism defined by $b(x)\mapsto h(b)(x)$, 
$b\in \mathsf{CAR}_1$, $x\in \left[0,\frac{1}{2}\right]$.

\smallskip{}

We next identify the fibers $\mathsf{CAR}_1(x)$, $x\in \left[0,\frac{1}{2}\right]$. 

\smallskip{}

We note first that with $\beta$  as in the previous subsection, we have that  
$w\mapsto\beta(w)(f)(x):=(\beta_w(f(x))$, $f\in C\left(\left[0,\frac{1}{2}\right], M_2(C(\mathbb T)\right)$, $w\in\mathbb T$, is a fiberwise action of $\mathbb T$.   
Moreover, the isomorphism $h$ of Proposition \ref{prop63} is equivariant in the sense that
\[ h(\alpha_w(b))(x) = \beta_w(h(b)(x)), b\in \mathsf{CAR}_1,  w\in\mathbb T, x\in \left[0,\frac{1}{2}\right].\]
In particular it implies that $\alpha$ is fiberwise with the induced action $\alpha^x$ on $\mathsf{CAR}_1(x)$ given by $\alpha_w^x(a(x))=wa(x)$, where $a$ is the generator of $\mathsf{CAR}_1$. 

\medskip


Let $C(\mathbb T)\rtimes_{\beta}{\mathbb Z}_2$ be the crossed product $C^*$-algebra corresponding to the dynamical system $(C(\mathbb T), \beta, \mathbb Z_2)$. It is the universal $C^*$-algebra generated by unitaries $u$ and $v$ satisfying the relations $uv=-vu$, $v^2=1$; the action $\beta$  of $\mathbb T$ 
 on $C(\mathbb T)$, $\beta_w(f)(z)=f(wz)$, $w,z\in \mathbb T$, induces a $\mathbb T$-action $\tau$ on $C(\mathbb T)\rtimes_{\beta}{\mathbb Z}_2$, given by $\tau_w(u)=wu$ and $\tau_w(v)=v$, $w\in\mathbb T$.  
 
 \smallskip{}
 
In what follows it will be convenient to use the generators of Clifford algebra. We recall that  the Clifford $C^*$-algebra $Cl_2$ is
$$Cl_{2}=C^*(e: e^2=0, ee^*+e^*e=1).$$
Clearly, $M_2\simeq Cl_2$, where the isomorphism is given by $\left( \begin{array}{cc}
            0 & 0  \\
            1 & 0
        \end{array} \right) \mapsto e$. 

\begin{theorem}\label{theorem65}
One has the following isomorphisms of fibers of $\mathsf{CAR}_1$:
\begin{equation*}
    \begin{split}
        & \mathsf{CAR}_1(0) \simeq M_2 \simeq Cl_2, \ \psi_0:h(a)(0) \mapsto\left( \begin{array}{cc}
            0 & 0  \\
            1 & 0
        \end{array} \right) \mapsto e,  \\
        & \mathsf{CAR}_1(x) \simeq C(\mathbb{T}) \rtimes_\beta \mathbb{Z}_2, \\
        &\ \psi_x:h(a)(x)\mapsto \frac{u}{2}\left((\sqrt{1-x} + \sqrt{x})1 + (\sqrt{1-x}-\sqrt{x})v\right), \ 0 < x < \frac{1}{2}, \\
        & \mathsf{CAR}_1\left(\frac{1}{2}\right) \simeq C(\mathbb{T}), \ \psi_{\frac{1}{2}}:h(a)\left(\frac{1}{2}\right)(z) \mapsto \frac{1}{\sqrt{2}}z.
    \end{split}
\end{equation*}
Moreover, the isomorphisms are $\mathbb{T}$-equivariant when $M_2$, $C(\mathbb{T}) \rtimes_\beta \mathbb{Z}_2$, $C(\mathbb{T})$ are equipped with the $\mathbb T$-action  given by 
\[ w \curvearrowright T = W(w)T W(w)^*, T\in M_2,\]
\[ w \curvearrowright T =  \tau_w(T), T\in C(\mathbb{T}) \rtimes_\beta \mathbb{Z}_2 ,\]
\[ w \curvearrowright f = \beta_w(f), f\in C(\mathbb T). \]
\end{theorem}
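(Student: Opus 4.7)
\emph{Plan.} Since $\mathsf{CAR}_1 \simeq \mathcal{B}$ by Proposition \ref{prop63} and the $C([0,\tfrac{1}{2}])$-structure on $\mathcal{B}$ is pointwise multiplication in $x$, the fiber $\mathsf{CAR}_1(x)$ is identified with the evaluation algebra $\mathcal{B}(x) := \{f(x) : f \in \mathcal{B}\} \subset M_2(C(\mathbb T))$ via $b(x) \mapsto h(b)(x)$. The plan is to read off $\mathcal{B}(x)$ directly from the defining conditions of $\mathcal{B}$ at each $x$. The reverse direction---that every admissible matrix-valued function at $x$ actually arises as $f(x)$ for some $f \in \mathcal{B}$---will be handled by multiplying by a cutoff $\phi$ concentrated near $x$; the only nontrivial check occurs at $x=0$, where the identity $W\,W(-z) = W(z)$ makes the $x = 0$ condition automatically compatible with the $W$-symmetric condition on $(0,\tfrac{1}{2})$.

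\emph{Endpoints.} At $x = 0$ the fiber condition forces $f(0)(z) = W(z)\,m\,W(z)^*$ with $m = f(0)(1) \in M_2$, so evaluation at $z=1$ gives $\mathcal{B}(0) \simeq M_2$; since $h(a)(0)(1) = \bigl(\begin{smallmatrix} 0 & 0 \\ 1 & 0\end{smallmatrix}\bigr)$, this matches $\psi_0$. At $x = \tfrac{1}{2}$, combining $f(\tfrac{1}{2}) \in V(D_2\otimes C(\mathbb T))V^*$ with the $W$-symmetry $f(\tfrac{1}{2})(z) = W f(\tfrac{1}{2})(-z) W^*$ forces $f(\tfrac{1}{2}) = V\,\mathrm{diag}(g_0(z), g_0(-z))\,V^*$ for a unique $g_0 \in C(\mathbb T)$, giving $\mathcal{B}(\tfrac{1}{2}) \simeq C(\mathbb T)$; setting $F := \bigl(\begin{smallmatrix} 0 & 1 \\ 1 & 0\end{smallmatrix}\bigr) = VWV^*$, one has $h(a)(\tfrac{1}{2})(z) = (z/\sqrt 2)\,F = V\,\mathrm{diag}(z/\sqrt 2, -z/\sqrt 2)\,V^*$, i.e., $g_0(z) = z/\sqrt 2$.

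\emph{Interior.} For $0 < x < \tfrac{1}{2}$ only the $W$-symmetry condition survives, so $\mathcal{B}(x) = (M_2 \otimes C(\mathbb T))^{\sigma_{-1}}$ with $\sigma_{-1} = \mathsf{Ad}(W)\otimes \beta_{-1}$. I will produce the isomorphism with $C(\mathbb T)\rtimes_\beta \mathbb Z_2$ via a concrete realization of the crossed product: define $L: C(\mathbb T)\rtimes_\beta \mathbb Z_2 \hookrightarrow M_2(C(\mathbb T))$ by $L(u) = \mathrm{diag}(z,-z)$ and $L(v) = F$, so that $L(f_0 + f_1 v) = \bigl(\begin{smallmatrix} f_0(z) & f_1(z) \\ f_1(-z) & f_0(-z)\end{smallmatrix}\bigr)$. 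A direct check shows the image of $L$ is precisely $(M_2\otimes C(\mathbb T))^{\mathsf{Ad}(F)\otimes \beta_{-1}}$. Since $VFV^* = W$, conjugation by $V$ intertwines $\mathsf{Ad}(F)\otimes \beta_{-1}$ with $\mathsf{Ad}(W)\otimes \beta_{-1}$ and hence restricts to an isomorphism of fixed-point algebras, yielding $\psi_x: \mathcal B(x) \to C(\mathbb T)\rtimes_\beta \mathbb Z_2$, $g \mapsto L^{-1}(V^* g V)$. A direct matrix multiplication gives $(V^* h(a)(x) V)(z) = (z/2)\bigl(\begin{smallmatrix} \alpha & \gamma \\ -\gamma & -\alpha\end{smallmatrix}\bigr)$ with $\alpha = \sqrt{1-x}+\sqrt x$ and $\gamma = \sqrt{1-x}-\sqrt x$; its $L$-preimage is $\tfrac{u}{2}(\alpha + \gamma v)$, exactly as claimed.

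\emph{Equivariance and main obstacle.} Equivariance follows from $\mathbb T$-equivariance of $h$ (so the fibrewise action is the restriction of pointwise $\beta_w$ on $M_2(C(\mathbb T))$ to $\mathcal B(x)$) together with the $\beta_w$-invariance of the constant matrices $W, V, F$: the pointwise $\beta_w$ transports to $m \mapsto W(w)\,m\,W(w)^*$ on $M_2$ at $x = 0$, to $\tau_w$ on $C(\mathbb T)\rtimes_\beta \mathbb Z_2$ at $0 < x < \tfrac{1}{2}$ (since $\beta_w(L(u)) = wL(u)$ and $\beta_w(L(v)) = L(v)$), and to $\beta_w$ on $C(\mathbb T)$ at $x = \tfrac{1}{2}$. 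The main obstacle is the interior case: one must find the correct concrete embedding $L$ of $C(\mathbb T)\rtimes_\beta \mathbb Z_2$ into $M_2(C(\mathbb T))$ and the specific conjugation by $V$ that reconciles the $W$-symmetric description of $\mathcal B(x)$ with the crossed product structure; once this has been arranged, the endpoint cases, the identification of $h(a)(x)$ with the stated element, and equivariance all reduce to direct computation.
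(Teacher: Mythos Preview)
Your proof is correct and arrives at the same isomorphisms as the paper, but by a more elementary route. For the interior fibers $0<x<\tfrac12$, the paper invokes Takai duality: it identifies $\mathcal B(x)$ with $(M_2(C(\mathbb T)))^\sigma$ via Theorem~\ref{algebrac}, then uses that $M_2\otimes C(\mathbb T)\simeq (C(\mathbb T)\rtimes_\beta\mathbb Z_2)\rtimes_{\hat\beta}\mathbb Z_2$ with the double dual action carried to $\sigma$, so the fixed points recover $C(\mathbb T)\rtimes_\beta\mathbb Z_2$; the generators $u,v$ are sent to $zF$ and $W$ respectively. You instead build the map by hand: the explicit embedding $L(u)=\mathrm{diag}(z,-z)$, $L(v)=F$ realises the crossed product as the $\mathsf{Ad}(F)\otimes\beta_{-1}$ fixed points, and since $VFV^*=W$ conjugation by $V$ transports this to the $\mathsf{Ad}(W)\otimes\beta_{-1}$ fixed points, which is exactly $\mathcal B(x)$. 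A short computation ($V\,\mathrm{diag}(z,-z)\,V^*=zF$ and $VFV^*=W$) shows your isomorphism coincides with the paper's, so the formula for $\psi_x(h(a)(x))$ necessarily agrees. Your approach avoids the abstract duality theorem at the cost of verifying directly that $L$ is faithful and surjective onto the fixed-point algebra; the paper's approach is more structural but imports more machinery. For $x=\tfrac12$ the paper also passes through $\mathcal C^\Sigma$ and a twisted $\mathbb Z_2$-action on $C(\mathbb T)\oplus C(\mathbb T)$, whereas you read the answer directly from the two boundary conditions on $\mathcal B$; both are straightforward. Your handling of surjectivity of evaluation via cutoffs, and in particular the observation that $W\,W(-z)=W(z)$ makes the $x=0$ condition compatible with the interior symmetry, is a point the paper leaves implicit.
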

\begin{proof}
For $x \in \left(0,\frac{1}{2}\right)$, by Theorem \ref{algebrac}, we have
\[ \mathsf{CAR}_1(x) \simeq (\mathcal{C}^\Sigma)(x) \simeq (\mathcal{C}(x))^\sigma = M_2(C(\mathbb{T}))^\sigma, \]
with the natural $C(\left[0,\frac{1}{2}\right])$-structure on $\mathcal C$ given as in (\ref{cstructure}). By the duality Theorem (see e.g. \cite[Section 7.1]{williams}), $M_2\otimes C(\mathbb T)\simeq (C(\mathbb{T}) \rtimes_\beta \mathbb{Z}_2 )\rtimes_{\widehat{\beta}}\mathbb Z_2$,
where $\widehat{\beta}$ is the dual action of $\widehat{\mathbb Z_2}\simeq\mathbb Z_2$ on $C(\mathbb{T}) \rtimes_\beta \mathbb{Z}_2$; the double dual  action $\widehat{\widehat{\beta}}$ on $(C(\mathbb{T}) \rtimes_\beta \mathbb{Z}_2 )\rtimes_{\widehat{\beta}} \mathbb{Z}_2)$ is carried by the isomorphism into $\tilde\beta$ on $M_2\otimes C(\mathbb T)$ given by $\tilde\beta_w=\text{Ad}(W(w))\otimes\beta_w=\sigma_w$, $w\in\mathbb Z_2$ ($w\mapsto W(w)$ is unitary equivalent to the left regular representation of $\mathbb Z_2$); hence, using the fixed point theorem, we obtain 
\[ M_2(C(\mathbb{T}))^\sigma\simeq ((C(\mathbb{T}) \rtimes_\beta \mathbb{Z}_2 )\rtimes_{\widehat{\beta}} \mathbb{Z}_2)^{\widehat{\widehat{\beta}}} \simeq C(\mathbb{T}) \rtimes_\beta \mathbb{Z}_2 \] 
 In particular, the isomorphism maps the generators $u$ and $v$ of  $C(\mathbb T)\rtimes_\beta\mathbb Z_2$ to  $z\left( \begin{array}{cc}
    0 & 1 \\
    1 & 0
\end{array} \right)$ and $\left( \begin{array}{cc}
    1& 0 \\
    0 & -1
\end{array} \right)$  in $M_2(C(\mathbb T))^\sigma$ respectively, and from which 
one can easily see that the element $\displaystyle\frac{u}{2}((\sqrt{1-x} + \sqrt{x})1 + (\sqrt{1-x}-\sqrt{x})v)$ maps to $z\left( \begin{array}{cc}
    0 & \sqrt{x} \\
    \sqrt{1-x} & 0
\end{array} \right)$.

 If $x = 0$, then
\[ \mathsf{CAR}_1(0)\simeq \{ f \in M_2(C(\mathbb{T})) : f(z) = W(z) f(1) W(z)^*, z\in\mathbb T\} \simeq M_2(\mathbb{C}), \]
with the isomorphism given by $f\mapsto f(1)$.

For $x = \frac{1}{2}$ one has the isomorphism $\mathcal{C}(\frac{1}{2}) \simeq C(\mathbb{T}) \oplus C(\mathbb{T})$ given by $\phi: f \mapsto V^* f V$, where $V=\frac{1}{\sqrt{2}}\left( \begin{array}{cc}
    1 & 1 \\
    1 & -1
\end{array} \right)$. Let $X = \left( \begin{array}{cc}
    0 & 1 \\
    1 & 0
\end{array} \right)$ and $\widetilde{\sigma}$ be the action of $\mathbb{Z}_2$ on $M_2(C(\mathbb{T}))$ given by
\[ \widetilde{\sigma}(f)(z) = X f(-z) X^*, z\in\mathbb T. \]
Then
\[ \phi(\sigma(f)) = \widetilde{\sigma}(\phi(f)), \ f \in M_2(C(\mathbb{T})). \]
Notice that $\widetilde{\sigma}$ acts on $C(\mathbb{T}) \oplus C(\mathbb{T})$ as
\[ \widetilde{\sigma}(f,g)(z) = (g,f)(-z), z\in\mathbb T. \]
Hence
\[ \mathsf{CAR}_1\left(\frac{1}{2}\right) \simeq (C(\mathbb{T}) \oplus C(\mathbb{T}))^{\widetilde{\sigma}} \simeq C(\mathbb{T}). \]
The formula for $\psi_{\frac{1}{2}}$ can be easily derived.  That the isomorphisms are $\mathbb T$-equivariant is straightforward. 
\end{proof}

\begin{remark}\label{anotheriso}\rm
In what follows we shall also use the isomorphism $\mathsf{CAR}_1(0) \simeq M_2$ given by $h(a)(0)\mapsto\left(\begin{array}{cc}0&1\\0&0
\end{array}\right)$. The isomorphism is $\mathbb T$-equivariant when $M_2$ is given the $\mathbb T$-action $ w \curvearrowright T = W(w)^*T W(w), T\in M_2, w\in\mathbb T$.
\end{remark}

\section{$\mathsf{CAR}_{\Theta}$ as  $C(K_n)$-algebra and its fibers}
Let $K_n = \left[0, \frac{1}{2} \right]^n$. We shall now use our knowledge about $\mathsf{CAR}_1$ to describe a $C(K_n)$-structure on $\mathsf{CAR}_{\Theta}$  and the corresponding fibers.  
For this we will use the fact that $\mathsf{CAR}_{\Theta}$ is Rieffel's deformation of the tensor product of $n$ copies of  $\mathsf{CAR}_1$ (Theorem \ref{fiber}). The result will allow us in particular to obtain a classification of all irreducible representations of $\mathsf{CAR}_{\Theta}$ providing an alternative proof of \cite[Theorem 3]{proskurin-sukretnyi}. 

\smallskip

Let $\Theta$ be a real skew-symmetric $n\times n$ matrix and let $\alpha$ be  
 the action of $\mathbb{T}$ on $\mathsf{CAR}_1$ given by (\ref{actioncar1}). 
Since $\alpha$ is fiberwise with respect to the $C(\left[0,\frac{1}{2}\right])$-structure on $\mathsf{CAR}_1$,  we get an action on $\mathsf{CAR}_1(x)$. Similarly, $\alpha^{\otimes n}$ is fiberwise with respect to the natural $C(K_n)$-structure on $\mathsf{CAR}_1^{\otimes n}$. 
Thus by Theorem \ref{fiber} and Proposition \ref{cx_deformation} one has the following statement:
\begin{proposition}\label{fiber_struct_CAR}
There exists  a $C(K_n)$-structure on $\mathsf{CAR}_\Theta$ such that
\[ \mathsf{CAR}_\Theta(x) \simeq (\mathsf{CAR}_1^{\otimes n}(x))^{\frac{\Theta}{2}}. \]
\end{proposition}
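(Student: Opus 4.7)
The plan is to combine the identification of $\mathsf{CAR}_\Theta$ as a Rieffel deformation from Theorem \ref{fiber} with the deformation-of-fibers result in Proposition \ref{cx_deformation}. The main things to verify are that the natural $C(K_n)$-structure on $\mathsf{CAR}_1^{\otimes n}$ is genuinely a $C(K_n)$-structure in the sense of the definition, and that the $\mathbb{R}^n$-action coming from $\alpha^{\otimes n}$ (via the quotient $\mathbb R^n\to\mathbb T^n$) is fiberwise.

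First, I would construct the $C(K_n)$-structure on $\mathsf{CAR}_1^{\otimes n}$ from the $C([0,\frac{1}{2}])$-structure $\Phi$ on each tensor factor, as described in subsection \ref{taction}. Since $C(K_n)\simeq C([0,\frac{1}{2}])^{\otimes n}$, defining
\[ \widetilde\Phi(g_1\otimes\cdots\otimes g_n)=\Phi(g_1)\otimes\cdots\otimes\Phi(g_n) \]
on elementary tensors and extending by linearity and continuity gives a $*$-homomorphism $C(K_n)\to Z(\mathcal M(\mathsf{CAR}_1^{\otimes n}))$. Injectivity follows from the injectivity of each $\Phi$ together with nuclearity of $C([0,\frac{1}{2}])$, and density of $\widetilde\Phi(C(K_n))\cdot \mathsf{CAR}_1^{\otimes n}$ follows from the corresponding density in each factor. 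The fibers of this structure are the expected tensor products, since the ideals $I_x^{\widetilde\Phi}$ are generated by the ideals $I_{x_i}^\Phi$ in each tensor slot.

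Next I would check that $\alpha^{\otimes n}$ is fiberwise. For $f=g_1\otimes\cdots\otimes g_n\in C(K_n)$, $a=a_1\otimes\cdots\otimes a_n\in\mathsf{CAR}_1^{\otimes n}$ and $t=(t_1,\ldots,t_n)\in\mathbb R^n$,
\[ \alpha^{\otimes n}_t(f\cdot a)=\bigotimes_{i=1}^n \alpha_{t_i}(g_i\cdot a_i)=\bigotimes_{i=1}^n g_i\cdot \alpha_{t_i}(a_i)=f\cdot\alpha^{\otimes n}_t(a), \]
using fiberwiseness of each $\alpha$ established in subsection \ref{taction}; the general case follows by linearity and continuity.

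With these two points in place, Proposition \ref{cx_deformation} applies directly and yields a $C(K_n)$-structure on $(\mathsf{CAR}_1^{\otimes n})^{\Theta/2}$ with fibers $((\mathsf{CAR}_1^{\otimes n})^{\Theta/2})(x)\simeq (\mathsf{CAR}_1^{\otimes n}(x))^{\Theta/2}$. Finally, transporting this structure across the $*$-isomorphism $\mathsf{CAR}_\Theta\simeq(\mathsf{CAR}_1^{\otimes n})^{\Theta/2}$ of Theorem \ref{fiber} gives the statement. No step looks like a serious obstacle: the isomorphism $C(K_n)\simeq C([0,\frac{1}{2}])^{\otimes n}$ and nuclearity of everything in sight make the tensor construction routine, and fiberwiseness is straightforward on elementary tensors.
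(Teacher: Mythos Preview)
Your proposal is correct and follows exactly the paper's approach: the paper's proof consists of noting that $\alpha^{\otimes n}$ is fiberwise for the natural $C(K_n)$-structure on $\mathsf{CAR}_1^{\otimes n}$ and then invoking Theorem~\ref{fiber} together with Proposition~\ref{cx_deformation}. You simply spell out the routine verifications (construction of $\widetilde\Phi$, identification of the fibers of the tensor product, and fiberwiseness on elementary tensors) that the paper leaves implicit.
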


Next we shall give a more explicit description of the fibers. 
\medskip{}

Given $x=(x_1, \ldots, x_n) \in K_n$, let
\begin{equation*}
    \begin{split}
        & L_x = \{i \in \mathbb N_n : x_i = 0 \},  \\ 
        & M_x = \left\{ i \in \mathbb N_n: 0 < x_i < \frac{1}{2} \right\}, \\
        & R_x = \left\{ i \in \mathbb N_n : x_i = \frac{1}{2} \right\}.
    \end{split}
\end{equation*}

For $S=\{S(1),\ldots, S(m)\} \subset \{1, \ldots, n \}$, we let $\Theta_S$ to be the  $m\times m$ matrix such that $(\Theta_S)_{i,j} = \Theta_{S(i), S(j)}$.
For a set $Y$, we write $Y^S=\{(a_i)_{i\in S}: a_i\in Y \}$. If $Y$ is a group, then so is  $Y^S$ with respect to coordinate-wise multiplication; similarly, $Y^S$ is a Hilbert space if so is $Y$ with natural linear operations and scalar product on it.   
Set   $$Cl_S=C^*(e_i, i\in S :  e_i^2 = 0, \ e_i e_i^* + e_i^* e_i = 1, \ e_i e_j = e_j e_i) \simeq Cl_2^{\otimes |S|}$$
and  write $C(\mathbb T^S_{\Theta_S})$ for the non-commutative torus: $$C^*(u_k, k\in S: u_ku_l=e^{-2\pi i\Theta_{k,l}}u_lu_k , u_ku_k^*=u_k^*u_k=1).$$
If $S=\{1,\ldots,n\}$ we write simply $Cl_{2n}$ and $C(\mathbb T^n_\Theta)$; if $n=2$, identifying $\Theta$ with $\theta:=\Theta_{1,2}\in\mathbb R$, we denote the non-commutative torus by  $C(\mathbb T^2_{\theta})$.

\medskip

For $x \in K_n$, let $\beta_\Theta^x$ be the action of $\mathbb Z_2^{M_x}$ on $C(\mathbb{T}^{M_x\sqcup R_x}_{\Theta_{M_x \sqcup R_x}})$ given by 
 $$\beta_\Theta^x(\omega)(u_l)=\omega_lu_l, l\in M_x \text{ and }\beta_\Theta^x(\omega)(u_l)=u_l, l\in R_x,$$ for $\omega=(\omega_k)_{k\in M_x}\in \mathbb Z_2^{M_x}$.
 Then $C(\mathbb{T}^{M_x\sqcup R_x}_{\Theta_{M_x \sqcup R_x}}) \rtimes_{\beta_\Theta^x}\mathbb{Z}_2^{M_x}$ is generated by $u_i$, $i\in M_x\sqcup R_x$, which satisfy the relations in  $C(\mathbb{T}^{M_x\sqcup R_x}_{\Theta_{M_x \sqcup R_x}})$, and self-adjoint unitaries $v_i$, $i\in M_x$,  such that $$v_iv_j=v_jv_i \text{ and } v_i^*u_jv_i=\beta_\Theta^x(\omega^i)(u_j),$$ $i\in M_x$, $j\in M_x\sqcup R_x$, where $\omega^i_i=-1$ and $\omega^i_k=1$ otherwise.

\begin{proposition}\label{isomorphism}
Let $x=(x_1, \ldots, x_n) \in K_n$.  Then
\[ \mathsf{CAR}_\Theta(x) \simeq Cl_{L_x} \otimes C\left(\mathbb{T}^{M_x\sqcup R_x}_{\Theta_{M_x \sqcup R_x}}\right) \rtimes_{\beta_\Theta^x}\mathbb{Z}_2^{M_x}. \]
The isomorphism is given by 
\begin{equation*}
    \begin{split}
       & h^x_\Theta(a_i(x)) = \prod_{k\in L_x} (e_k e_k^* + e^{\pi i \Theta_{i,k}} e_k^* e_k) e_i \otimes 1, i \in L_x, \\
       & h^x_\Theta(a_i(x)) = \prod_{k\in L_x} (e_k e_k^* + e^{2\pi i \Theta_{i,k}} e_k^* e_k) \otimes  \frac{u_i}{2}\left((\sqrt{1-x_i}+\sqrt{x_i})\right. \\&\qquad\qquad\qquad+ \left.(\sqrt{1 - x_i}-\sqrt{x_i}) v_i\right) , i \in M_x, \\
& h^x_\Theta(a_i(x)) = \prod_{k \in L_x} (e_ke_k^* + e^{2\pi i \Theta_{i,k}} e_k^* e_k) \otimes \frac{u_i}{\sqrt{2}}  \ ,  i \in R_x.
    \end{split}
\end{equation*}
\end{proposition}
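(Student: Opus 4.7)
The plan is to combine our explicit understanding of $\mathsf{CAR}_1$ with the Rieffel deformation machinery developed in Sections \ref{Rieffeldef} and \ref{riefell_mna}. Starting from Proposition \ref{fiber_struct_CAR}, which gives $\mathsf{CAR}_\Theta(x) \simeq (\mathsf{CAR}_1^{\otimes n}(x))^{\Theta/2}$, I would first use that the $C(K_n)$-structure on $\mathsf{CAR}_1^{\otimes n}$ is the tensor product of the individual $C([0,\frac12])$-structures on each factor to identify $\mathsf{CAR}_1^{\otimes n}(x) \simeq \bigotimes_{i=1}^n \mathsf{CAR}_1(x_i)$. Applying Theorem \ref{theorem65} (together with Remark \ref{anotheriso} for the zero-fiber factors) coordinate-wise, and regrouping the factors according to $L_x$, $M_x$ and $R_x$, produces a $\mathbb T^n$-equivariant isomorphism
\[ \mathsf{CAR}_1^{\otimes n}(x) \simeq Cl_{L_x} \otimes \bigl( C(\mathbb T^{M_x \sqcup R_x}) \rtimes_{\beta^{M_x}} \mathbb Z_2^{M_x} \bigr), \]
where $\mathbb T^{L_x}$ acts on $Cl_{L_x}$ by inner automorphisms (conjugation by $\bigotimes_{i \in L_x} W(w_i)^*$) and $\mathbb T^{M_x \sqcup R_x}$ acts coordinate-wise, by $\tau$ on the crossed-product factors and by $\beta$ on the remaining $C(\mathbb T)$ factors.

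Setting $B := C(\mathbb T^{M_x \sqcup R_x}) \rtimes_{\beta^{M_x}} \mathbb Z_2^{M_x}$, and using that $Cl_{L_x} \simeq M_{2^{|L_x|}}$ carries an inner $\mathbb T^{L_x}$-action, I would next invoke Theorem \ref{theorem53} with the natural block partition of $\Theta/2$ into $L_x$ and $M_x \sqcup R_x$ parts to obtain
\[ (Cl_{L_x} \otimes B)^{\Theta/2} \simeq Cl_{L_x} \otimes B^{(\Theta/2)_{M_x \sqcup R_x}}. \]
Then Proposition \ref{fixed_point_deformation} identifies $B^{(\Theta/2)_{M_x \sqcup R_x}}$ with $C(\mathbb T^{M_x \sqcup R_x})^{(\Theta/2)_{M_x \sqcup R_x}} \rtimes \mathbb Z_2^{M_x}$, and Proposition \ref{homogeneous_rieffel} identifies the deformed function algebra with the non-commutative torus $C(\mathbb T^{M_x \sqcup R_x}_{\Theta_{M_x \sqcup R_x}})$ (the factor $\tfrac12$ in $\Theta/2$ being recovered by comparing the two orderings $u_i \cdot_{\Theta/2} u_j$ and $u_j \cdot_{\Theta/2} u_i$). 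Since $\beta^{M_x}$ acts on generators by scalar multiplication, which is unaffected by Rieffel deformation, the induced $\mathbb Z_2^{M_x}$-action on the deformed torus is precisely $\beta_\Theta^x$, completing the abstract isomorphism.

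To recover the explicit formulas for $h^x_\Theta(a_i(x))$, I would trace $a_i(x)$ through the composition. It first maps to $\tilde a_i(x) = 1 \otimes \cdots \otimes a(x_i) \otimes \cdots \otimes 1$, which under Theorem \ref{theorem65} becomes $e_i$, $\frac{u_i}{2}((\sqrt{1-x_i}+\sqrt{x_i})1 + (\sqrt{1-x_i}-\sqrt{x_i})v_i)$, or $\frac{u_i}{\sqrt 2}$ in its $i$-th slot, depending on whether $i \in L_x$, $M_x$ or $R_x$. The twist factors $\prod_{k \in L_x}(e_k e_k^* + e^{c \pi i \Theta_{i,k}} e_k^* e_k)$ then arise from the formula for $\Phi$ in Theorem \ref{theorem53}: for $i \in L_x$ the contribution comes from the $\Psi$-term applied to $e_i$ (giving exponent $c = 1$), while for $i \in M_x \sqcup R_x$ it comes from the unitary $U_{\omega(-2\delta_i)} = \bigotimes_{k \in L_x} W(\omega_k(-2\delta_i))$ (giving exponent $c = 2$).

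I expect the main obstacle to be the careful bookkeeping of $\mathbb T$-gradings and phase conventions, in particular verifying that the exponents $\pi$ versus $2\pi$ come out as stated; choosing the alternative identification of Remark \ref{anotheriso} for the $L_x$-factors turns out to be essential, since it places the generator $e$ in the matrix unit that makes the $W(w)^*$-grading on $\mathbb C^2$ compatible with the formula for $\Psi$ in Lemma \ref{lemma51}. Once the conventions are aligned, verifying that the candidate images satisfy the $\mathsf{CAR}_\Theta$ relations is a direct (if tedious) calculation using $e_i^* e_i + e_i e_i^* = 1$, $e_i e_i^* e_i = e_i$, and the commutation of $e_i$ with the phase operators living in the other factors.
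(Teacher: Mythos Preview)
Your proposal is correct and follows essentially the same route as the paper: start from Proposition~\ref{fiber_struct_CAR}, identify the undeformed fiber via Theorem~\ref{theorem65} and Remark~\ref{anotheriso}, split off the Clifford factor using Theorem~\ref{theorem53}, and then apply Proposition~\ref{fixed_point_deformation} together with Proposition~\ref{homogeneous_rieffel} to recognise the remaining deformation as the noncommutative torus crossed by $\mathbb Z_2^{M_x}$. Your account of how the phase factors $e^{\pi i\Theta_{i,k}}$ versus $e^{2\pi i\Theta_{i,k}}$ arise---from the $\Psi$-part of $\Phi$ for $i\in L_x$ and from $U_{\omega(-2\delta_i)}$ for $i\in M_x\sqcup R_x$---matches the paper's computation exactly.
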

\begin{proof}
By Proposition \ref{fiber_struct_CAR} we have 
\begin{equation}\label{iso_1}
        \mathsf{CAR}_\Theta(x) \simeq (\mathsf{CAR}_1^{\otimes n}(x))^{\frac{\Theta}{2}} \simeq (\bigotimes_{i = 1}^n \mathsf{CAR}_1(x_i))^{\frac{\Theta}{2}}. 
\end{equation} 
By Theorem \ref{theorem65}, 
\begin{equation}\label{iso_2}
       \mathsf{CAR}_\Theta(x) \simeq \left(Cl_{L_x} \otimes (C(\mathbb{T}^{M_x}) \rtimes_\beta \mathbb{Z}_2^{M_x}) \otimes C(\mathbb{T}^{R_x})\right)^{\frac{\Theta}{2}},
\end{equation}
where the action of $\mathbb T^{L_x\sqcup M_x\sqcup R_x}$, which determines the latter Riffel deformation, is given by the corresponding product of $\mathbb T$-actions on $Cl_2\simeq M_2$, $C(\mathbb T)\rtimes_\beta\mathbb Z^2$ and $C(\mathbb T)$ given in Remark \ref{anotheriso} (for the action on $M_2$) and Theorem \ref{theorem65}.  Identify $Cl_{L_x}$ with $\otimes_{k\in L_x}M_2$ through $e_k\mapsto\otimes_{l\in L_x} a_l$, where $a_k=\left(\begin{array}{cc}0&1\\0&0\end{array}\right)$ and $a_l=I_2$ otherwise.  The action $\mathbb T^{L_x}$ on $Cl_{L_x}$, given by  \[ \alpha_{L_x}: (z_i)_{i\in L_x} \curvearrowright e_i = z_i e_i, \]
is implemented by the unitary representation $(U_z)$ of $\mathbb T^{L_x}$ on $(\mathbb{C}^2)^{L_x}$ given by 
\begin{equation*}
    \begin{split}
       &   U_z\left(\bigotimes_{i\in L_x}\left( \begin{array}{c}
     \xi_i^1  \\
     \xi_i^2 
\end{array} \right)\right) = \bigotimes_{x\in L_x}\left( \begin{array}{c}
      \xi_i^1  \\
     \overline{z_i} \xi_i^2 
\end{array} \right), \quad z=(z_i)_{i\in L_x}\in\mathbb T^{L_x} .
    \end{split}
\end{equation*}
By Theorem \ref{theorem53} and the remark after it, we have 
\begin{equation}\label{iso_3}
\mathsf{CAR}_\Theta(x) \simeq Cl_{L_x} \otimes \left((C(\mathbb{T}^{M_x}) \rtimes_\beta \mathbb{Z}_2^{M_x})\otimes C(\mathbb{T}^{R_x})\right)^{\frac{\Theta_{M_x \sqcup R_x}}{2}}.
\end{equation}
Since $\tilde\beta:=\beta\otimes{\rm id}:{\mathbb Z_2}^{M_x\sqcup R_x}\to {\rm Aut}(C(\mathbb{T}^{M_x})\otimes C(\mathbb{T}^{R_x}))$  commutes with the action that defines the Rieffel deformation, by Remark 3.6 and Proposition 3.3 we have 
\begin{equation*}
    \begin{split}
        \mathsf{CAR}_\Theta(x) & \simeq Cl_{L_x} \otimes \left(C(\mathbb{T}^{M_x 
        \sqcup R_x}) \rtimes_{\tilde \beta} \mathbb{Z}_2^{M_x}\right)^{\frac{\Theta_{M_x \sqcup R_x}}{2}}  \simeq \\ & \simeq Cl_{L_x} \otimes \left(C(\mathbb{T}^{M_x 
        \sqcup R_x})\right)^{\frac{\Theta_{M_x \sqcup R_x}}{2}} \rtimes_{\beta_\Theta^x} \mathbb{Z}_2^{M_x}. \\ & \simeq Cl_{L_x} \otimes \left(C(\mathbb{T}^{M_x 
        \sqcup R_x}_{\Theta_{M_x \sqcup R_x}}) \rtimes_{\beta_\Theta^x} \mathbb{Z}_2^{M_x}\right).
    \end{split}
\end{equation*}

To see the formulas recall the maps $\Phi$ and $\Psi$ from Section \ref{riefell_mna}.  
If $i \in L_x$  then
\[ h_\Theta^x(a_i(x)) = \Phi( e_i \otimes 1) = \Psi(e_i) \otimes 1. \]
Let $\xi \in (\mathbb{C}^2)^{L_x}$ be homogeneous of order $q = (b_i)_{i\in L_x}$ with $b_i \in \{0, -1\}$,  i.e.  $\xi=\otimes_{i\in L_x}f_{b_i}$, where $\left\{f_0=\left(\begin{array}{c} 1\\0\end{array}\right), f_{-1}=\left(\begin{array}{c} 0\\1\end{array}\right)\right\}$ is the standard basis in $\mathbb C^2$.   If $b_k =-1$ then $e_k^* e_k (\xi) = \xi, \ e_k e_k^*(\xi) = 0$. Since $e_j$ is homogeneous of order $p = \delta_j\in \mathbb Z^{L_x}$,  one has
\begin{equation*}
    \begin{split}
        \Psi(e_i)\xi & = e^{2 \pi i \langle \Theta_{L_x}(\delta_i)/2, q \rangle} e_i \xi =  e^{2 \pi i \sum_{k \in L_x} \frac{1}{2}\Theta_{k,i} b_k} e_i \xi \\ &= \prod_{k:b_k = -1} e^{- \pi i \Theta_{k,i}} e_i \xi = \\ & = \prod_{k \in L_x}(e_k e_k^* + e^{-\pi i \Theta_{k,i}} e_k^* e_k) e_i \xi.
    \end{split}
\end{equation*}

Let $i \in R_x$.   Since $u_i$ is homogeneous of order $\delta_i\in \mathbb Z^{M_x\sqcup R_x}$, by Theorem \ref{theorem53}
\[ h_\Theta^x(a_i(x)) = (\alpha_{L_x})_{\omega(-\delta_i)}(\Psi(1))U_{\omega(-2\delta_i)} \otimes \frac{1}{\sqrt{2}} u_i =  U_{\omega(-2\delta_i)} \otimes \frac{1}{\sqrt{2}} u_{i}. \]
 Notice that
\begin{equation*}
U_z = \prod_{k \in L_x} (e_k e_k^* + \overline{z_k} e_k^* e_k), \ z= (z_i)_{i\in L_x},
\end{equation*}
and $\omega_{j}(-\delta_i) = e^{-2 \pi i \Theta_{i, j}}$. Thus 
\[  h_\Theta^x(a_i(x)) =\prod_{k \in L_x} (e_k e_k^* + e^{2\pi i \Theta_{i,k}} e_k^* e_k)  \otimes \frac{1}{\sqrt{2}} u_{i}. \]
If $i \in M_x$ then similar calculations give
\begin{eqnarray*}
 h_\Theta^x(a_i(x)) =&& \prod_{k \in L_x} (e_k e_k^* + e^{2\pi i \Theta_{i,k}} e_k^* e_k)  \otimes \frac{u_i}{2}((\sqrt{1-x_i}+\sqrt{x_i})\\&& + (\sqrt{1 - x_i}-\sqrt{x_i})v_i).  
\end{eqnarray*}
\end{proof}

Now, having a description of fibers of $\mathsf{CAR}_{\Theta}$, we can classify all irreducible representations of $\mathsf{CAR}_{\Theta}$. The following lemma reduces the procedure to the classification of irreducible representations of the fibers.
\begin{lemma}\cite[Proposition C.5]{williams}\label{rep}
Suppose a $C^*$-algebra $\mathcal A$ is equipped with a $C_0(X)$-structure. Then any irreducible representation of $\mathcal A$ factors through an irreducible representation of a fiber $\mathcal A(x)$ for some $x\in X$. 
\end{lemma}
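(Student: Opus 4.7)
The plan is to extend $\pi$ to the multiplier algebra, invoke Schur's lemma on the image of $\Phi(C_0(X))$, and then show that the corresponding defining ideal lies in the kernel.

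More concretely, let $\pi : \mathcal{A} \to B(\mathcal{H})$ be an irreducible representation. Since $\pi$ is irreducible it is non-degenerate (the closure of $\pi(\mathcal{A})\mathcal{H}$ is invariant and nonzero), so $\pi$ extends uniquely to a unital $*$-homomorphism $\tilde{\pi} : \mathcal{M}(\mathcal{A}) \to B(\mathcal{H})$. Composing $\tilde{\pi}$ with the structure map gives a unital $*$-homomorphism $\tilde{\pi} \circ \Phi : C_0(X) \to B(\mathcal{H})$. Since $\Phi(C_0(X)) \subset Z(\mathcal{M}(\mathcal{A}))$, every operator $\tilde{\pi}(\Phi(f))$ commutes with $\pi(\mathcal{A})$. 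Irreducibility of $\pi$ combined with Schur's lemma forces $\tilde{\pi}(\Phi(f)) \in \mathbb{C}\cdot I$ for every $f \in C_0(X)$.

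Thus $f \mapsto \tilde{\pi}(\Phi(f))$ is a character of $C_0(X)$ (nonzero because the representation is nondegenerate and $\Phi(C_0(X)) \cdot \mathcal{A}$ is dense in $\mathcal{A}$), so it is given by point evaluation at some $x \in X$, i.e. $\tilde{\pi}(\Phi(f)) = f(x) \cdot I$ for all $f \in C_0(X)$. I would then show $\pi$ vanishes on $I_x^{\Phi}$: for any $a \in \mathcal{A}$ and $f \in C_0(X)$ with $f(x) = 0$,
\begin{equation*}
\pi(\Phi(f) \cdot a) = \tilde{\pi}(\Phi(f)) \pi(a) = f(x) \pi(a) = 0,
\end{equation*}
so by continuity $\pi|_{I_x^{\Phi}} = 0$. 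Consequently $\pi$ factors as $\pi = \bar{\pi} \circ \mathrm{ev}_x^{\Phi}$ for a $*$-homomorphism $\bar{\pi} : \mathcal{A}(x) \to B(\mathcal{H})$.

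Finally I would verify that $\bar{\pi}$ is irreducible: its image equals $\pi(\mathcal{A})$, whose commutant is $\mathbb{C}\cdot I$ by irreducibility of $\pi$; hence $\bar{\pi}(\mathcal{A}(x))' = \mathbb{C}\cdot I$ and $\bar{\pi}$ is irreducible. The only mildly delicate point is the existence of the extension $\tilde{\pi}$ and the verification that the character picked out by Schur's lemma corresponds to an actual point of $X$ rather than a character at infinity; the density condition $\overline{\Phi(C_0(X)) \cdot \mathcal{A}} = \mathcal{A}$ in the definition of a $C_0(X)$-structure rules out the latter, which is the small technical step I would treat most carefully.
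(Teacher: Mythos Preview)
The paper does not actually give its own proof of this lemma; it is quoted verbatim from \cite[Proposition C.5]{williams} and simply invoked. Your argument is the standard one (and is essentially what one finds in Williams): extend to the multiplier algebra, apply Schur to the central image of $C_0(X)$, identify the resulting character with evaluation at some $x\in X$, and factor. One small slip: the composite $\tilde{\pi}\circ\Phi:C_0(X)\to B(\mathcal{H})$ is only \emph{unital} when $X$ is compact; in general it is merely a nonzero $*$-homomorphism, and it is precisely the density condition $\overline{\Phi(C_0(X))\mathcal{A}}=\mathcal{A}$ that forces nonvanishing and hence identifies it with evaluation at a genuine point of $X$, as you correctly note at the end.
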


 Lemma \ref{rep} and Proposition \ref{isomorphism} reduce  the classification of all irreducible representations of $\mathsf{CAR}_{\Theta}$ to that of the $C^*$-algebra $Cl_{2k} \otimes C(\mathbb{T}^{n+m}_\Theta) \rtimes_{\beta_\Theta} \mathbb{Z}_2^n$. We will next derive explicit formulas reducing further the classification to the classification of irreducible representations of a non-commutative torus. 

As in the proof of Proposition \ref{isomorphism} we write $e_i$ for the image of $e_i\in Cl_S$ in $\otimes_{k\in S}M_2$ for $S=L_x$ and $S=M_x$, $x\in \left[0,\frac{1}{2}\right]^{M_x}$, i.e. $e_i=\otimes_{k\in S} a_k$ with $a_i=\left(\begin{array}{cc}0&1\\0&0\end{array}\right)$  and $a_k=I_2$ otherwise.

\begin{lemma}\label{reprholemma}
Any irreducible representation of $\mathsf{CAR}_{\Theta}$ is unitary equivalent to a subrepresentation of the representation $\rho_x\circ{\rm ev}_x$, $x=(x_i)_{i\in M_x}\in \left[0,\frac{1}{2}\right]^{M_x}$, where
${\rm ev}_x: \mathsf{CAR}_{\Theta}\to \mathsf{CAR}_{\Theta}(x)$, $a\mapsto a(x)$ and $\rho_x$ is the representation of $\mathsf{CAR}_{\Theta}(x)$ on $(\otimes_{k\in L_x}\mathbb C^2)\bigotimes (\otimes_{k\in M_x}\mathbb C^2)\bigotimes H$ given by 
\begin{equation}\label{reprho}
    \begin{split}
       & \rho_x(a_i(x)) = \prod_{k\in L_x} (e_k e_k^* + e^{\pi i \Theta_{i,k}} e_k^* e_k) e_i \otimes 1, i \in L_x, \\
       & \rho_x(a_i(x)) = \prod_{k\in L_x} (e_k e_k^* + e^{2\pi i \Theta_{i,k}} e_k^* e_k) \otimes  (\sqrt{x_i}e_i + \sqrt{1 - x_i} e_i^*)\otimes u_i ,  i \in M_x, \\
& \rho_x(a_i(x)) = \prod_{k \in L_x} (e_k e_k^* + e^{2\pi i \Theta_{i,k}} e_k^* e_k) \otimes 1\otimes\frac{1}{\sqrt{2}} u_i \ ,  i \in R_x.
    \end{split}
\end{equation}
where $\{u_i: i\in M_x\sqcup R_x\}$ is an irreducible representation of $C(\mathbb T^{M_x\sqcup R_x}_{\Theta_{M_x\sqcup R_x}})$ on the Hilbert space $H$. 
\end{lemma}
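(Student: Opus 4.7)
The plan is to use the $C(K_n)$-fiber bundle structure on $\mathsf{CAR}_\Theta$ to funnel an irreducible representation into a single fiber, rewrite that fiber via Proposition \ref{isomorphism} as a tensor product of a Clifford algebra and a crossed product of a noncommutative torus by a finite abelian group, and finally apply the standard Mackey analysis of crossed products by finite groups. By Lemma \ref{rep} applied to the $C(K_n)$-structure of Proposition \ref{fiber_struct_CAR}, every irreducible representation of $\mathsf{CAR}_\Theta$ factors as $\tilde\pi\circ\mathrm{ev}_x$ for some $x\in K_n$ and some irreducible $*$-representation $\tilde\pi$ of $\mathsf{CAR}_\Theta(x)$. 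Using $h^x_\Theta$, $\tilde\pi$ becomes an irreducible representation of $Cl_{L_x}\otimes(C(\mathbb{T}^{M_x\sqcup R_x}_{\Theta_{M_x\sqcup R_x}})\rtimes_{\beta^x_\Theta}\mathbb{Z}_2^{M_x})$; since $Cl_{L_x}\simeq M_{2^{|L_x|}}$ is simple with a unique irreducible representation up to unitary equivalence on $(\mathbb{C}^2)^{\otimes L_x}$, any such irreducible representation is of the form $\pi_{Cl}\otimes\pi_{cp}$, where $\pi_{Cl}$ is the standard action of Clifford generators and $\pi_{cp}$ is an irreducible representation of the crossed product factor.

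Since $\mathbb{Z}_2^{M_x}$ is a finite abelian group, Mackey's analysis of crossed products (see e.g.\ \cite{williams}) ensures that any irreducible representation $\pi_{cp}$ of $C(\mathbb{T}^{M_x\sqcup R_x}_{\Theta_{M_x\sqcup R_x}})\rtimes_{\beta^x_\Theta}\mathbb{Z}_2^{M_x}$ embeds as a subrepresentation of the representation induced from an irreducible representation $\{u_i:i\in M_x\sqcup R_x\}$ of the noncommutative torus on some Hilbert space $H$. I would realize this induced representation explicitly on $(\mathbb{C}^2)^{\otimes M_x}\otimes H$: for $i\in M_x$ send $u_i$ to $(e_i+e_i^*)\otimes\pi(u_i)$ (acting on the $i$-th copy of $\mathbb{C}^2$ as the Pauli flip and on $H$ as $\pi(u_i)$), send $v_i\in\mathbb{Z}_2^{M_x}$ to $(e_ie_i^*-e_i^*e_i)\otimes I$ (a signed Pauli $\sigma_3$ on the $i$-th copy), and for $j\in R_x$ send $u_j$ to $I\otimes\pi(u_j)$; the crossed-product relations $v_i^2=1$, $v_iv_j=v_jv_i$, $v_i^*u_iv_i=-u_i$ and $v_i^*u_jv_i=u_j$ for $j\neq i$ then follow by a direct Pauli-matrix check.

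It remains to verify that this induced representation, tensored with $\pi_{Cl}$ and pulled back through $h^x_\Theta$, produces the formulas \eqref{reprho} defining $\rho_x$. For $i\in L_x$ and $i\in R_x$ this is immediate from the formulas for $h^x_\Theta(a_i(x))$ in Proposition \ref{isomorphism}. For $i\in M_x$ the match reduces to the one-variable identity
\[\tfrac12(e_i+e_i^*)\bigl((\sqrt{1-x_i}+\sqrt{x_i})+(\sqrt{1-x_i}-\sqrt{x_i})(e_ie_i^*-e_i^*e_i)\bigr)=\sqrt{x_i}e_i+\sqrt{1-x_i}e_i^*,\]
which converts $\tfrac{u_i}{2}((\sqrt{1-x_i}+\sqrt{x_i})+(\sqrt{1-x_i}-\sqrt{x_i})v_i)$ from the expression for $h^x_\Theta(a_i(x))$ into the $\rho_x$-factor $(\sqrt{x_i}e_i+\sqrt{1-x_i}e_i^*)\otimes u_i$. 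The main obstacle I foresee is not invoking Mackey (a standard black box) but pinning down the exact signs in the Pauli-to-Clifford identifications so that the crossed-product relations are preserved and the formula matches the statement of the lemma on the nose rather than merely up to unitary equivalence.
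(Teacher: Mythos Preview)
Your argument is correct and lands on the same representation $\rho_x$ as the paper, but the packaging differs. The paper uses Takai duality to realise the crossed-product factor as the fixed-point subalgebra $(M_{2^{|M_x|}}\otimes C(\mathbb{T}^{M_x\sqcup R_x}_{\Theta_{M_x\sqcup R_x}}))^{\tilde\beta}$, giving an embedding
\[
\mathsf{CAR}_\Theta(x)\hookrightarrow \Bigl(\bigotimes_{k\in L_x}M_2\Bigr)\otimes\Bigl(\bigotimes_{k\in M_x}M_2\Bigr)\otimes C(\mathbb{T}^{M_x\sqcup R_x}_{\Theta_{M_x\sqcup R_x}}),
\]
and then invokes \cite[Proposition~2.10.2]{dixmier} to conclude that every irreducible of the subalgebra sits inside $\mathrm{id}\otimes\mathrm{id}\otimes\pi$ for some irreducible $\pi$ of the torus. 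You instead appeal to Mackey induction for the same conclusion. The two are equivalent here: for finite abelian $G$, the restriction of $\mathrm{id}\otimes\sigma$ from $M_{|G|}(A)$ to $A\rtimes G\subset M_{|G|}(A)$ \emph{is} the induced representation in the Fourier-transformed model, which is exactly your Pauli-matrix realisation $u_i\mapsto(e_i+e_i^*)\otimes\pi(u_i)$, $v_i\mapsto(e_ie_i^*-e_i^*e_i)\otimes I$. Your relation checks and the one-variable identity for $i\in M_x$ are correct and reproduce the paper's embedding formulas.

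One caution worth flagging: a bare Frobenius-reciprocity argument for ``every irreducible of $A\rtimes G$ embeds in some $\mathrm{Ind}\,\sigma$ with $\sigma$ irreducible'' would need $\pi|_A$ to contain an irreducible subrepresentation, which is not automatic when $A$ is not type~I (and the noncommutative torus is not type~I for irrational $\Theta$). The robust justification of your black box is precisely the embedding $A\rtimes G\hookrightarrow M_{|G|}(A)$ together with Dixmier's extension result---i.e.\ the paper's route---so if pressed on this point you should cite that rather than Frobenius reciprocity alone.
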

\begin{proof}
Proposition \ref{isomorphism} and the duality arguments for crossed products as in the proof of Theorem \ref{theorem65} gives
\begin{eqnarray*}
\mathsf{CAR}_{\Theta}(x)&\simeq& Cl_{L_x}\otimes \left(M_{2|M_x|}\otimes C(\mathbb T_{\Theta_{M_x\sqcup R_x}}^{M_x\sqcup R_x})\right)^{\tilde\beta_{\Theta_{M_x\sqcup R_x}}}\\&\subset& \left(\otimes_{k\in L_x} M_2\right)\bigotimes\left(\otimes_{k\in M_x} M_2\right)\bigotimes C(\mathbb T_{\Theta_{M_x\sqcup R_x}}^{M_x\sqcup R_x}),
\end{eqnarray*}
where $\tilde\beta_{\Theta_{M_x\sqcup R_x}}$ is defined by $$\tilde\beta_{\Theta_{M_x\sqcup R_x}}(w)=\text{Ad}(W(w_i))^{\otimes |M_x|}\otimes\beta_\Theta^x(w), w=(w_i)_{i\in M_x})\in \mathbb Z_2^{M_x}.$$
The imbedding is given by (\ref{reprho}) with $(u_i)_{i\in M_x\sqcup R_x}$ being the generators of $C(\mathbb T_{\Theta_{M_x\sqcup R_x}}^{M_x\sqcup R_x})$.
As any irreducible representation of  $$\left(\otimes_{k\in L_x} M_2\right)\bigotimes\left(\otimes_{k\in M_x} M_2\right)\bigotimes C(\mathbb T_{\Theta_{M_x\sqcup R_x}}^{M_x\sqcup R_x})$$ is unitary equivalent to $\text{id}\otimes\text{id}\otimes\pi$, where $\pi$ is a representation of $C(\mathbb T_{\Theta_{M_x\sqcup R_x}}^{M_x\sqcup R_x})$, the statement now follows from \cite[Proposition 2.10.2]{dixmier}. 
 \end{proof}

The next result was proved in \cite{proskurin-sukretnyi}, but here  we present its alternative proof that uses essentially new approach employing $C(K_n)$-structure of  $\mathsf{CAR}_{\Theta}$.  The representations in the list below are unitary equivalent to those given in \cite[Theorem 3]{proskurin-sukretnyi}. 
\begin{theorem}\label{clas_irr}
Any irreducible representation of  $\mathsf{CAR}_{\Theta}$ is unitary equivalent to a representation $\tau_x$, $x\in [0,\frac{1}{2}]^{M_x}$ given  on $(\otimes_{k\in L_x}\mathbb C^2)\bigotimes (\otimes_{k\in M_x}\mathbb C^2)\bigotimes H$ by
\begin{equation}\nonumber
    \begin{split}
       & \tau_x(a_i) = \prod_{k\in L_x} (e_k e_k^* + e^{\pi i \Theta_{i,k}} e_k^* e_k) e_i \otimes 1, i \in L_x, \\
       & \tau_x(a_i) = \prod_{k\in L_x} (e_k e_k^* + e^{\pi i \Theta_{i,k}} e_k^* e_k) \otimes \left(\left(\prod_{k\in M_x, k<i} ( e_k^* e_k + e^{2\pi i \Theta_{i,k}} e_k e_k^*)\otimes 1_H\right)\right. \\&\times\left.\left(\sqrt{x_i}\prod_{k\in M_x,k\geq i}(e_k^* e_k + e^{4\pi i \Theta_{i,k}} e_k e_k^*)e_i\otimes v_i + \sqrt{1 - x_i} e_i^*\otimes 1_H\right)\right) ,  i \in M_x, \\
& \tau_x(a_i) = \prod_{k \in L_x} (e_k e_k^* + e^{\pi i \Theta_{i,k}} e_k^* e_k) \otimes \prod_{k\in M_x}(e_k^* e_k + e^{2\pi i \Theta_{i,k}} e_k e_k^*)\otimes\frac{v_i}{\sqrt{2}}\ ,  i \in R_x.
    \end{split}
\end{equation}
where $(v_i)_{i\in M_x\sqcup R_x}$ defines an irreducible representation of $C(\mathbb T^{M_x\sqcup R_x}_{\Sigma})$ on $H$,  where $$\Sigma_{i,j}=\left\{\begin{array}{cl}4\Theta_{i,j}& i,j\in M_x,\\2\Theta_{i,j}& (i,j)\text{ or }(j,i)\in M_x\times R_x,\\
\Theta_{i,j}& i,j\in R_x.\end{array}\right.$$ 
Moreover, two such irreducible representations $\tau_x$ and $\tau_y$ are unitary equivalent if and only if $x=y$ and the corresponding representations of  $C(\mathbb T^{M_x\sqcup R_x}_{\Sigma})$ are unitary equivalent. 
\end{theorem}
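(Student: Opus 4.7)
My strategy is to build on Lemma \ref{reprholemma}, which already reduces the classification problem to analyzing the representations $\rho_x \circ \mathrm{ev}_x$ of $\mathsf{CAR}_\Theta$ for each $x \in K_n$. The remaining work is to rewrite these representations in the form $\tau_x$, in which the $\mathbb Z_2^{M_x}$ factor of the crossed product has been absorbed into a renormalised noncommutative torus with parameter matrix $\Sigma$, and then to extract the uniqueness statement.

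The key step is to construct an explicit unitary $U$ on $(\bigotimes_{k\in L_x}\mathbb C^2) \otimes (\bigotimes_{k\in M_x}\mathbb C^2) \otimes H$ that conjugates $\rho_x$ into $\tau_x$. I would take $U$ as a tensor product of two pieces: a diagonal unitary on $\bigotimes_{k\in L_x}\mathbb C^2$ that rescales the phases from $e^{2\pi i \Theta_{i,k}}$ to $e^{\pi i \Theta_{i,k}}$ whenever $i\in M_x\sqcup R_x$, and a Jordan--Wigner-type unitary on $(\bigotimes_{k\in M_x}\mathbb C^2) \otimes H$ that replaces the original generators $u_i$ of $C(\mathbb T^{M_x\sqcup R_x}_{\Theta_{M_x\sqcup R_x}})$ on $H$ by new operators $v_i$ acting on the full space. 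Under this unitary, the factor $\sqrt{x_i}e_i + \sqrt{1-x_i}e_i^*$ for $i\in M_x$ appearing in $\rho_x$ should split into the form displayed in the statement, with the $e_i^*$-term remaining untwisted and the $e_i$-term picking up both the Jordan--Wigner string $\prod_{k\in M_x,\,k<i}(\cdots)$ and the $v_i$ factor.

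The heart of the argument is then to verify that the $v_i$'s satisfy the relations of $C(\mathbb T^{M_x\sqcup R_x}_\Sigma)$. When commuting two such $v_i$ and $v_j$, extra phases arise from rearranging the Clifford projections $e_k^* e_k$ and $e_k e_k^*$ inside the Jordan--Wigner string, and these phases exactly double (for $i,j$ both in $M_x$, producing the factor $4\Theta_{i,j}$) or contribute once (for one index in $M_x$ and one in $R_x$, producing $2\Theta_{i,j}$) the original $\Theta$-factor. For $i,j\in R_x$ no such rearrangement occurs and the factor stays $\Theta_{i,j}$. This accounts exactly for the definition of $\Sigma$. A direct computation then gives $U \rho_x(a_i(x)) U^* = \tau_x(a_i)$ for every $i$, establishing that $\tau_x$ is a representation of $\mathsf{CAR}_\Theta$ that is unitarily equivalent to $\rho_x \circ \mathrm{ev}_x$.

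For the last part of the statement, note that $Cl_{L_x} \otimes \bigotimes_{k\in M_x} M_2$ acts as a full matrix algebra on the first two tensor factors, hence $\tau_x$ is irreducible iff $(v_i)_{i\in M_x\sqcup R_x}$ defines an irreducible representation of $C(\mathbb T^{M_x\sqcup R_x}_\Sigma)$. Conversely, Lemma \ref{reprholemma} shows every irreducible representation of $\mathsf{CAR}_\Theta$ arises this way. Any intertwiner between $\tau_x$ and $\tau_y$ respects the $C(K_n)$-structure of $\mathsf{CAR}_\Theta$, and therefore can exist only when $x$ and $y$ lie in the same fiber, forcing $x=y$; once $x=y$, the rigidity of the matrix-algebra tensor factor reduces the equivalence problem for $\tau_x$ and $\tau_y$ to that of the noncommutative torus representations given by the $v_i$'s. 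The principal obstacle in this plan is the explicit Jordan--Wigner bookkeeping in step two, where interweaving the Clifford commutation relations with the torus relations on $H$ and tracking how projections in the string rearrange past each other is tedious but essentially mechanical; once that computation is settled, the remaining steps are routine.
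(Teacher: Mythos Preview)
Your proposal is essentially the same approach as the paper's proof. The paper also starts from Lemma~\ref{reprholemma} and conjugates $\rho_x$ by an explicit unitary of Jordan--Wigner type, namely $V = \prod_{k\in M_x} V_k$ with $V_k = 1\otimes(e_k e_k^*\otimes 1 + e_k^* e_k\otimes u_k)$, which replaces $u_i$ by $u_i^2$ for $i\in M_x$ and leaves $u_i$ unchanged for $i\in R_x$; this is exactly what produces the matrix $\Sigma$, and matches your description precisely.

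Two minor remarks. First, your proposed diagonal unitary on $\bigotimes_{k\in L_x}\mathbb C^2$ cannot rescale the phases $e^{2\pi i\Theta_{i,k}}$ to $e^{\pi i\Theta_{i,k}}$: those factors are themselves diagonal on that tensor component, so they commute with any diagonal unitary. (The paper's $V$ also acts trivially on the $L_x$ factor, so the change of exponent between Lemma~\ref{reprholemma} and the theorem statement is not effected by the conjugation and appears to be a typographical inconsistency in the paper itself.) Second, for the uniqueness part your high-level appeals to the $C(K_n)$-structure and to matrix-algebra rigidity are correct in spirit and amount to the same argument the paper gives; the paper simply makes this concrete by computing $\tau_x(a_i^*a_i)$, reading off $x_i$ from its spectrum to force $x=y$, and then decomposing an intertwiner against the projections $e_k e_k^*$, $e_k^*e_k$ to show it has the form $1\otimes C'$ with $C'$ intertwining the torus representations.
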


\begin{proof}
Recall the representation $\rho_x$ from Lemma \ref{reprholemma} and consider  the unitary operators on $\left(\otimes_{k\in L_x}\mathbb C^2\right)\bigotimes \left(\otimes_{k\in M_x}\mathbb C^2\right)\bigotimes H$ given by
$$V_k= 1\otimes(e_ke_k^*\otimes 1+e_k^*e_k\otimes u_k), k\in M_x.$$
Set $V=V_{i_1}\ldots V_{i_{|M_x|}}$, where $M_x=\{i_1,\ldots, i_{|M_x|}\}$ and $i_k< i_l$ if $k<l$. 
Then 
\begin{equation*}
    \begin{split}
       & V\rho_x(a_i)V^* = \rho_x(a_i)=\prod_{k\in L_x} (e_k e_k^* + e^{\pi i \Theta_{i,k}} e_k^* e_k) e_i \otimes 1, i \in L_x, \\
       & V\rho_x(a_i)V^* = \prod_{k\in L_x} (e_k e_k^* + e^{\pi i \Theta_{i,k}} e_k^* e_k)\otimes \\&\qquad\otimes \left(\left(\prod_{k\in M_x, k<i} ( e_k^* e_k + e^{2\pi i \Theta_{i,k}} e_k e_k^*)\otimes 1_H\right)\times\right. \\&\times\left.\left(\sqrt{x_i}\prod_{k\in M_x,k\geq i}(e_k^* e_k + e^{4\pi i \Theta_{i,k}} e_k e_k^*)e_i\otimes u_i^2 + \sqrt{1 - x_i} e_i^*\otimes 1_H\right)\right) ,  i \in M_x, \\
       & V\rho_x(a_i)V^* = \prod_{k \in L_x} (e_k e_k^* + e^{\pi i \Theta_{i,k}} e_k^* e_k) \otimes\\&\qquad\otimes \prod_{k\in M_x}(e_k^* e_k + e^{2\pi i \Theta_{i,k}} e_k e_k^*)\otimes\frac{1_H}{\sqrt{2}} _i \ ,  i \in R_x.
    \end{split}
\end{equation*}
It is easy to see that the family $\{u_i^2:i\in M_x\}\cup\{u_i:i\in R_x\}$ forms a representation of $C(\mathbb T^{M_x\sqcup R_x}_{\Sigma})$. Moreover, any such family with $v_i$ instead of $u_i^2$, $i\in M_x$, and $v_i$ instead of $u_i$, $i\in R_x$, where $(v_i)_{i\in M_x\sqcup R_x}$ defines a representation of $C(\mathbb T^{M_x\sqcup R_x}_{\Sigma})$, is a representation of $\mathsf{CAR}_{\Theta}$.

Fix $x\in \left[0,\frac{1}{2}\right]^{M_x}$ and let $C$ be an operator intertwining the representations corresponding to families $\mathbb V=(v_i)_{i\in M_x\sqcup R_x}$ and $\mathbb W=(w_i)_{i\in M_x\sqcup R_x}$ acting on $H_{\mathbb V}$ and $H_{\mathbb W}$ respectively.  Denote them 
by $\tau_{\mathbb V}$ and $\tau_{\mathbb W}$ respectively; we have $C\tau_{\mathbb V}(a)=\tau_{\mathbb W}(a)C$, $a\in  \mathsf{CAR}_{\Theta}$, i.e. $C\in \text{Hom}(\tau_{\mathbb V},\tau_{\mathbb W})$.  In particular,  
\begin{equation}\label{eq_inter}
C\tau_{\mathbb V}(a_i^*a_i)=\tau_{\mathbb W}(a_i^*a_i)C,
i\in L_x\sqcup M_x\sqcup R_x. 
\end{equation}
We have
\begin{equation}\label{aiai}
\tau_{\mathbb V}(a_i^*a_i)=\left\{\begin{array}{ll}e_i^*e_i\otimes 1\otimes 1_{H_{\mathbb V}},&i\in L_x,\\
1\otimes ((1-x_i)e_ie_i^*+x_ie_i^*e_i)\otimes 1_{H_{\mathbb V}},&i\in M_x,\\
1\otimes 1\otimes 1_{H_{\mathbb V}},& i\in R_x.\end{array}\right.
\end{equation}
Therefore, it is easy to see that (\ref{eq_inter}) implies that $C=\sum_i p_i\otimes C_i$, where $p_i=\prod_{k\in L_x\sqcup M_x}q_k^i$ with $q_k^i\in \{e_ke_k^*, e_k^*e_k\}_{k\in L_x\sqcup M_x}\subset \otimes_{k\in L_x\sqcup M_x}M_2$ ($=(\otimes_{k\in L_x} M_2)$ $\bigotimes(\otimes_{k\in M_x} M_2)$), and $C_i\in \mathcal B(H_{\mathbb V}, H_{\mathbb W})$; the summation is over all possible products $ p_i=\prod_{k\in L_x\sqcup M_x}q_k^i$.

The condition $C\tau_{\mathbb V}(a_k)=\tau_{\mathbb W}(a_k)C$ for $k\in L_x$ is equivalent to $$\sum _i\alpha_{i,k}p_ie_k\otimes C_i=\sum _i\alpha_{i,k}e_kp_i\otimes C_i$$ for some non-zero $\alpha_{i,k}$. As 
$$p_ie_k=\left\{\begin{array}{ll} 0,& \text{if } q_k^i=e_k^*e_k\\
e_kp_{\sigma_k(i)}, &\text{if } q_k^i=e_ke_k^*\end{array}\right.$$
here $q_k^{\sigma_k(i)}=e_k^*e_k$  if $q_k^i=e_ke_k^*$ and vice versa and $q_j^{\sigma_k(i)}=q_j^i$ otherwise (i.e. we swap the projection $q_k^i$ to the other possible value for the $k$th factor), we obtain $C_i=C_{\sigma_k(i)}$ for all $k\in L_x$. 
Similarly, the condition $C\tau_{\mathbb V}(a_k)=\tau_{\mathbb W}(a_k)C$ for $k\in M_x$  implies first that $C_i=C_{\sigma_k(i)}$, $k\in M_x$, giving now that all $C_i$'s are equal; call the common value $C'$ and get $C=1\otimes C'$.   Then we obtain  that $C'v_k=w_kC'$ for all $k\in M_x$. The condition $C\tau_{\mathbb V}(a_k)=\tau_{\mathbb W}(a_k)C$ for $k\in R_x$ gives $C'v_k=w_kC'$.  Therefore we have a bijection $\text{Hom}(\tau_{\mathbb V},\tau_{\mathbb W})\to\text{Hom}(\mathbb V,\mathbb W)$, $1\otimes C\mapsto C$. From this it easily follows that $\tau_{\mathbb V}$ is irreducible iff $\mathbb V$ defines an irreducible representation  $C(\mathbb T^{M_x\sqcup R_x}_{\Sigma})$. 



That $\tau_x$ and $\tau_y$ are not unitary equivalent for $x\ne y$ follows from the fact that the spectrum of $\tau_x(a_i^*a_i)$ is in $\{1, x_i, 1-x_i\}$ if $i\in M_x$, see (\ref{aiai}).
 \end{proof}

\section{Classification of $\mathsf{CAR}_{\Theta}$}
This section contains the main result of the paper and concerns the classification of $ \mathsf{CAR}_{\Theta}$ up to isomorphism. To obtain the result we will employ another $C(K)$-structure coming from the center of  $\mathsf{CAR}_{\Theta}$ and relate it to the $C\left(\left[0,\frac{1}{2}\right]\right)$-structure on  the algebra. We will then use $K$-theoretical arguments applied to the fibers to derive the result.

Let $\Theta_1$ and $\Theta_2$ be skew-symmetric real $n\times n$ matrices. Suppose $\varphi : \mathsf{CAR}_{\Theta_1} \rightarrow \mathsf{CAR}_{\Theta_2}$ is an isomorphism. It induces an isomorphism of the centers and a homeomorphism $\alpha: \text{spec }Z(\mathsf{CAR}_{\Theta_2}) \rightarrow \text{spec }Z(\mathsf{CAR}_{\Theta_1})$ of their Gelfand spectrum. Let $Z_{\Theta_i}=\text{spec }Z(\mathsf{CAR}_{\Theta_i})$, $i=1,2$.
 We have a natural $C(Z_{\Theta_i})$-structure  on $ \mathsf{CAR}_{\Theta_i}$ given by the inverse of the Gelfand transform $\hat g\mapsto g$, $g\in  Z(\mathsf{CAR}_{\Theta_i})$, $i=1,2$: $\Phi_i(\hat g)\cdot a=ga$, $a\in \mathsf{CAR}_{\Theta_i}$. Letting  $$I_z^\Theta=\{ga: a\in  \mathsf{CAR}_{\Theta}, \hat g(z)=0\},  z\in Z_{\Theta}, $$
we have  the following commutative diagram:
\[
\begin{tikzcd}
0 \arrow[r] &  I_{\alpha(z)}^{\Theta_1} \arrow[r] \arrow[d, "\varphi \vert_{I_{\alpha(z)}^{\Theta_1}}"]& \mathsf{CAR}_{\Theta_1} \arrow[r] \arrow[d,"\varphi"] & \mathsf{CAR}_{\Theta_1}(\alpha(z)) \arrow[r] \arrow[d, dashed] & 0 \\
0 \arrow[r] &  I_{z}^{\Theta_2} \arrow[r] & \mathsf{CAR}_{\Theta_2} \arrow[r] & \mathsf{CAR}_{\Theta_2}(z) \arrow[r] & 0
\end{tikzcd}
\]
which gives the isomorphisms $$\mathsf{CAR}_{\Theta_1}({\alpha(z)}) \simeq \mathsf{CAR}_{\Theta_2}(z)$$ for every $z\in Z_{\Theta_2}$. 

The $C\left(K_n\right)$-structure on  $\mathsf{CAR}_{\Theta}$ induces an injective homomorphism from $C\left(K_n\right)$ to $C(Z_{\Theta})$ and hence a canonical continuous surjection 
$\pi : Z_{\Theta} \rightarrow K_n$. We also have for all $z\in Z_{\Theta}$  that $I_{\pi(z)}$ is an ideal in $I_z^\Theta$ and hence 
$$\mathsf{CAR}_{\Theta}(z)\simeq \mathsf{CAR}_{\Theta}/I_z^\Theta\simeq (\mathsf{CAR}_{\Theta}/I_{\pi(z)})/(I_z^\Theta/I_{\pi(z)})$$ so that  $\mathsf{CAR}_{\Theta}(z)$ is a quotient of $\mathsf{CAR}_{\Theta}(\pi(z))$.

\begin{definition}
Recall $L_x$, $M_x$ and $R_x$, $x\in K_n$, from Section 7 and for each $z\in  Z_{\Theta}$ we define the face signature to be  $\mathsf{face}(z)=(|L_{\pi(z)}|,|M_{\pi(z)}|, |R_{\pi(z)}|)$.
\end{definition}

\begin{definition}
We say that a real skew-symmetric $n\times n$ matrix $\Theta$ is {\it irrational} if whenever $p\in\mathbb Z^n$ satisfies $e^{2\pi i\langle p,\Theta (q)\rangle}=1$ for all $q\in \mathbb Z^n$ then $p=0$. 
\end{definition}
We note that some authors choose to call such $\Theta$ non-degenerate, see e.g. \cite{phillips}. 

\smallskip

We now give a description of the fibers of $\mathsf{CAR}_\Theta$ over $Z_\Theta$ using the above connection with $C\left(K_n \right)$-structure and the description of fibers given in Proposition \ref{isomorphism}.

Let $\Theta$ be an irrational skew-symmetric $n\times n$-matrix. For $z\in Z_{\Theta}$ set 
$x = \pi(z) \in K_n$ and   $l = |L_x|, m = |M_x|, r = |R_x|$. The description splits in the following four cases:
\begin{enumerate}
    \item If $m + r \geq 2$ then $\mathsf{CAR}_{\Theta}(x) \simeq Cl_{2l} \otimes C(\mathbb{T}^{m+r}_{\Theta_{M_x \sqcup R_x}}) \rtimes _{\beta_{\Theta}^x}\mathbb{Z}_2^m$. Since 
    $\Theta_{M_x \sqcup R_x}$ is irrational, $Z(\mathsf{CAR}_{\Theta}(x)) \simeq \mathbb C$.  From this one can easily derive that $I_{\pi(z)}=I_z^\Theta$ and hence   
    $$\mathsf{CAR}_{\Theta}(z)\simeq Cl_{2l} \otimes C(\mathbb{T}^{m+r}_{\Theta_{M_x \sqcup R_x}}) \rtimes_{\beta_{\Theta}^x} \mathbb{Z}_2^m.$$
    \item If $l = n - 1, \ m = 1$ then $\mathsf{CAR}_{\Theta_2}(z)$ is a quotient of $(\mathsf{CAR}_{\Theta_2})(x) \simeq Cl_{2n - 2} \otimes C(\mathbb{T}) \rtimes _{\beta_{\Theta}^x}\mathbb{Z}_2$. As $C(\mathbb{T}) \rtimes _{\beta}\mathbb{Z}_2 \simeq M_2(C(\mathbb{T}))$ (see e.g. \cite[Proposition 3.4]{choi-latre}),
    $\mathsf{CAR}_{\Theta_2}(x) \simeq Cl_{2n} \otimes C(\mathbb{T})$ with  all quotients being of the form $Cl_{2n} \otimes C(K)$ for some closed subset  $K \subset \mathbb{T}$.
    \item If $l = n - 1, \ r = 1$ then  $\mathsf{CAR}_{\Theta}(z)$ is a quotient of $\mathsf{CAR}_{\Theta}(x) \simeq Cl_{2n - 2} \otimes C(\mathbb{T})$.  All such quotients have the form $Cl_{2n-2} \otimes C(K)$ for a closed subset $K \subset \mathbb{T}$.
    \item If $l = n$ then $\mathsf{CAR}_{\Theta}(x) \simeq Cl_{2n}\simeq \mathsf{CAR}_{\Theta}(z)$.
\end{enumerate}
To prove the main result we need the following auxiliary lemmas. 

\smallskip{}

\begin{lemma}\label{torus} Let $\Theta$ be irrational and let $\sigma\in \text{Aut}(C(\mathbb T^n_\Theta))$, given by $\sigma(u_1)=-u_1$, $\sigma(u_k)=u_k$, $k>1$. Then
$$C(\mathbb T^n_\Theta)^\sigma\simeq C(\mathbb T^n_{\Theta^{(1)}}),$$
where $\Theta^{(1)}_{i,j}=2\Theta_{i,j}$ if either $i$ or $j=1$ and $\Theta^{(1)}_{i,j}=\Theta_{i,j}$ otherwise. 
\end{lemma}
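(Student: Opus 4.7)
The natural candidate is the map $\phi : C(\mathbb T^n_{\Theta^{(1)}}) \to C(\mathbb T^n_\Theta)^\sigma$ sending the universal generators $v_1,\ldots,v_n$ of $C(\mathbb T^n_{\Theta^{(1)}})$ to $u_1^2, u_2,\ldots, u_n$. Each $u_k$ ($k\ge 2$) and $u_1^2$ is a $\sigma$-fixed unitary, and a short calculation using $u_1u_k = e^{-2\pi i\Theta_{1,k}}u_ku_1$ yields
\[
u_1^2 u_k = e^{-4\pi i \Theta_{1,k}} u_k u_1^2 = e^{-2\pi i \Theta^{(1)}_{1,k}} u_k u_1^2,\qquad k\ge 2,
\]
while for $k,l\ge 2$ the relation $u_ku_l = e^{-2\pi i\Theta_{k,l}}u_lu_k$ coincides with $e^{-2\pi i \Theta^{(1)}_{k,l}}u_lu_k$ by definition of $\Theta^{(1)}$. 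By universality $\phi$ extends to a $*$-homomorphism.

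For surjectivity I would use the conditional expectation $E(a) = \tfrac{1}{2}(a + \sigma(a))$ onto $C(\mathbb T^n_\Theta)^\sigma$. On the dense $*$-subalgebra spanned by monomials $u^p = u_1^{p_1}\cdots u_n^{p_n}$, $E$ acts by multiplication by $\tfrac{1}{2}(1 + (-1)^{p_1})$, so its range is the closed span of the monomials with $p_1$ even; all such monomials are polynomials in $u_1^{\pm 2}, u_2^{\pm 1},\ldots, u_n^{\pm 1}$ and thus lie in $\phi(C(\mathbb T^n_{\Theta^{(1)}}))$. Since $E$ is onto $C(\mathbb T^n_\Theta)^\sigma$ and a $*$-homomorphism between $C^*$-algebras has closed range, $\phi$ is surjective.

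For injectivity the plan is to show that $\Theta^{(1)}$ is itself irrational, whereupon simplicity of irrational noncommutative tori forces the nonzero $\phi$ to be injective. Write $\Theta^{(1)} = D\Theta D$ with $D = \mathrm{diag}(2,1,\ldots,1)$, so that $\langle p, \Theta^{(1)} q\rangle = \langle Dp, \Theta Dq\rangle$. If $p\in\mathbb Z^n$ satisfies $\langle p, \Theta^{(1)}q\rangle\in\mathbb Z$ for every $q\in\mathbb Z^n$, taking $q = e_1$ and $q = e_k$ ($k\ge 2$) yields $\langle Dp,\Theta e_1\rangle \in \tfrac{1}{2}\mathbb Z$ and $\langle Dp,\Theta e_k\rangle\in\mathbb Z$, whence $\langle 2Dp, \Theta q\rangle\in\mathbb Z$ for every $q\in\mathbb Z^n$. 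Since $2Dp\in\mathbb Z^n$, irrationality of $\Theta$ forces $2Dp = 0$ and hence $p = 0$. The transfer of irrationality is the only genuine calculation; the remaining steps are routine bookkeeping with the universal property.
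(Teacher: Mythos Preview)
Your proof is correct and follows essentially the same route as the paper: both identify $C(\mathbb T^n_\Theta)^\sigma$ with $C^*(u_1^2,u_2,\ldots,u_n)$ via the averaging map $a\mapsto\tfrac12(a+\sigma(a))$, produce the surjection from $C(\mathbb T^n_{\Theta^{(1)}})$ by universality, and conclude injectivity from simplicity of the irrational torus. The paper simply cites simplicity of $C(\mathbb T^n_{\Theta^{(1)}})$ without spelling out why $\Theta^{(1)}$ is again irrational; your diagonal-matrix argument $\Theta^{(1)}=D\Theta D$ with $D=\mathrm{diag}(2,1,\ldots,1)$ fills that small gap explicitly.
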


\begin{proof}
We note first that $C(\mathbb T^n_\Theta)^\sigma=\{x+\sigma(x): x\in C(\mathbb T^n_\Theta)\}$ from which it is easy to see using approximation arguments that $C(\mathbb T^n_\Theta)^\sigma$ equals  the $C^*$-subalgebra $C^*(u_1^2,u_2,\ldots, u_n)$, generated by $u_1^2$, $u_2,\ldots, u_n$. Furthermore, the map $u_1\mapsto u_1^2$, $u_k\mapsto u_k$, $k>1$, extends to a surjective $*$-homomorphism from $C(\mathbb T_{\Theta^{(1)}}^n)$ to $C^*(u_1^2,u_2,\ldots, u_n)$. The statement now follows from the  simplicity of $C(\mathbb T_{\Theta^{(1)}}^n)$, see e.g. \cite[Theorem 1.9]{phillips}.
\end{proof}
For a skew-symmetric real matrix $\Theta$ of size $n=m+r$ let $\Sigma$ be given by
$$\Sigma_{i,j}=\left\{\begin{array}{cl}4\Theta_{i,j}& i,j\leq m\\2\Theta_{i,j}& \text{ either } i\leq m \text{ or } j\leq m,\\
\Theta_{i,j}& i,j>m\end{array}\right.$$
Define $\beta_\Theta:\mathbb Z^m_2\to\text{Aut}(C(\mathbb T^{m+r}_\Theta))$ by $\beta_\Theta(\omega)(u_k)=\omega_ku_k$ for $k\leq m$ and $\beta_\Theta(\omega)(u_k)=u_k$ if $k>m$, where $\omega=(\omega_1,\ldots,\omega_m)$.

\begin{lemma}\label{cross_lemma}
Let $\Theta$, $\Sigma$ and $\beta_\Theta$ be as above. Then 
\[ C(\mathbb{T}^{m+r}_\Theta) \rtimes_{\beta_\Theta} \mathbb{Z}_2^m \simeq Cl_{2m}\otimes C(\mathbb{T}^{m+r}_{\Sigma}). \]
\end{lemma}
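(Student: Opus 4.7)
The plan is to realize $C(\mathbb T^{m+r}_\Theta)\rtimes_{\beta_\Theta}\mathbb Z_2^m$ as a Rieffel deformation of the commutative-torus crossed product $C(\mathbb T^{m+r})\rtimes_\beta\mathbb Z_2^m$, identify the latter with $Cl_{2m}\otimes C(\mathbb T^{m+r})$, and then apply Theorem~\ref{theorem53} on this model to compute the effect of the deformation. For the undeformed identification, $C(\mathbb T^{m+r})=C(\mathbb T)^{\otimes(m+r)}$ is commutative and $\beta$ is the coordinate-wise flip on the first $m$ factors, so iterating the isomorphism $C(\mathbb T)\rtimes_\beta\mathbb Z_2\simeq Cl_2\otimes C(\mathbb T)$ from the proof of Theorem~\ref{theorem65} produces $C(\mathbb T^{m+r})\rtimes_\beta\mathbb Z_2^m\simeq Cl_{2m}\otimes C(\mathbb T^{m+r})$, with $u_i\mapsto e_i+w_ie_i^*$ for $i\le m$, $u_i\mapsto w_i$ for $i>m$, and $v_i\mapsto e_i^*e_i-e_ie_i^*$.

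Next, the standard $\mathbb T^{m+r}$-action $\alpha_z(u_i)=z_iu_i$ on $C(\mathbb T^{m+r})$ commutes with $\beta$ and extends to the crossed product by fixing each $v_i$. By Proposition~\ref{homogeneous_rieffel}, $C(\mathbb T^{m+r})^{\Theta/2}\simeq C(\mathbb T^{m+r}_\Theta)$, and the induced action on the deformed torus agrees with $\beta_\Theta$; Proposition~\ref{fixed_point_deformation} therefore yields
\[
(C(\mathbb T^{m+r})\rtimes_\beta\mathbb Z_2^m)^{\Theta/2}\simeq C(\mathbb T^{m+r}_\Theta)\rtimes_{\beta_\Theta}\mathbb Z_2^m,
\]
so it suffices to compute the left-hand side through the identification of the first paragraph. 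Under that identification, the transferred $\mathbb T^{m+r}$-action on $Cl_{2m}\otimes C(\mathbb T^{m+r})$ scales $e_i$ by $z_i$ and $w_i$ by $z_i^2$ for $i\le m$, and scales $w_i$ by $z_i$ for $i>m$.

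Since this is not a product action, I enlarge the group to $\mathbb T^m\times\mathbb T^{m+r}$ with the first $\mathbb T^m$ acting only on $Cl_{2m}$ via $z_i\cdot e_i=z_ie_i$ and the second $\mathbb T^{m+r}$ acting only on $C(\mathbb T^{m+r})$ standardly; the original action is the restriction along the embedding $\iota(z)=((z_1,\dots,z_m);(z_1^2,\dots,z_m^2,z_{m+1},\dots,z_{m+r}))$. A Rieffel deformation along $\alpha\circ\iota$ with matrix $\Theta/2$ equals the deformation along $\alpha$ with matrix $\tilde\Theta=M(\Theta/2)M^T$, where $M$ is the integer matrix of $\iota$ on characters; a direct block computation produces a skew-symmetric $\tilde\Theta$ whose bottom-right $(m+r)\times(m+r)$ block $\tilde\Theta_{22}$ satisfies $2\tilde\Theta_{22}=\Sigma$. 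Theorem~\ref{theorem53} and Proposition~\ref{homogeneous_rieffel} now combine to give
\[
(Cl_{2m}\otimes C(\mathbb T^{m+r}))^{\tilde\Theta}\simeq Cl_{2m}\otimes C(\mathbb T^{m+r})^{\tilde\Theta_{22}}\simeq Cl_{2m}\otimes C(\mathbb T^{m+r}_\Sigma),
\]
which together with the displayed iso above completes the proof. The main bookkeeping obstacle is the change-of-matrix computation for $\tilde\Theta$ and verifying $2\tilde\Theta_{22}=\Sigma$ with correct signs and factors of two; everything else is a formal chaining of earlier results.
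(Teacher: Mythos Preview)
Your argument is correct and takes a genuinely different route from the paper. The paper proceeds by induction on $m$: for $m=1$ it realizes $C(\mathbb T^{1+r}_\Theta)\rtimes_\sigma\mathbb Z_2$ as the fixed-point algebra $(M_2\otimes C(\mathbb T^{1+r}_\Theta))^{\tilde\sigma}$ via Takai duality, then conjugates by an explicit unitary $U=\begin{pmatrix}0&1\\u_1&0\end{pmatrix}$ to land in $M_2(C(\mathbb T^{1+r}_\Theta)^\sigma)$ and invokes Lemma~\ref{torus} to identify $C(\mathbb T^{1+r}_\Theta)^\sigma\simeq C(\mathbb T^{1+r}_{\Theta^{(1)}})$; iterating peels off one $\mathbb Z_2$ at a time. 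Your approach instead undoes the deformation first (Proposition~\ref{fixed_point_deformation}), handles the \emph{commutative} crossed product in one stroke, and then reapplies the deformation through Theorem~\ref{theorem53} after the change-of-group step. The main difference in what each buys: the paper's argument is self-contained with the tools stated in the article but leans on Lemma~\ref{torus}, whose proof uses simplicity of the noncommutative torus and hence irrationality of the resulting parameter matrices; your argument works for arbitrary $\Theta$ and avoids Lemma~\ref{torus} entirely, at the cost of invoking the pullback identity $A^{\alpha\circ\iota,\Theta/2}\simeq A^{\alpha,M(\Theta/2)M^T}$ for torus actions. That identity is not stated in the paper, but for periodic actions it follows immediately from Proposition~\ref{homogeneous_rieffel}: an element $\alpha$-homogeneous of degree $p\in\mathbb Z^{2m+r}$ is $(\alpha\circ\iota)$-homogeneous of degree $M^Tp$, so the two deformed products agree on a dense subalgebra. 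Your block computation is correct; with $M=\begin{pmatrix}I_m&0\\2I_m&0\\0&I_r\end{pmatrix}$ one indeed gets $\tilde\Theta_{22}=\begin{pmatrix}2A&B\\-B^T&C/2\end{pmatrix}$ and $2\tilde\Theta_{22}=\Sigma$. It would strengthen the write-up to spell out the pullback identity in one line rather than leave it implicit.
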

\begin{proof}
Let first $m=1$ and write $\sigma$ for $\beta_\Theta$.
The arguments as in Theorem \ref{theorem65} show that $$C(\mathbb T^{1+r}_\Theta)\rtimes_{\sigma} \mathbb{Z}_2\simeq (M_2(C(\mathbb T^{1+r}_\Theta)))^{\tilde\sigma},$$ where $\tilde\sigma=\text{Ad}W\otimes\sigma$ and $W=\left(\begin{array}{cc} 1&0\\0& -1\end{array}\right)$.
Furthermore, if $U=\left(\begin{array}{cc}0&1\\u_1&0\end{array}\right)$, then $UM_2(C(\mathbb T^{1+r}_\Theta))^{\tilde\sigma}U^*=M_2(C(\mathbb T^{1+r}_\Theta)^{\sigma})$,
as
\begin{equation}\nonumber
M_2(C(\mathbb T^{1+r}_\Theta))^{\tilde\sigma}=\left\{ \left[ \begin{array}{cc}
    A & B \\
    C & D
\end{array} \right] : A, D \in C(\mathbb T^{1+r}_\Theta)^{\sigma}, B, C\in C(\mathbb T^{1+r}_\Theta)^{\sigma}(-1) \right\} \end{equation}
 where $\cl A^\sigma(-1)=\{ a\in \cl A: \sigma(a)=-a\}$. 
 This together with Lemma \ref{torus} yields the statement for $m=1$. 
 To see it for general $m$ we note first that
 $$C(\mathbb T^{m+r}_\Theta)\rtimes_{\beta_\Theta}\mathbb Z_2^m\simeq(C(\mathbb T^{1+(m-1+r)}_\Theta)\rtimes_{\sigma}\mathbb Z_2)\rtimes_{\beta_\Theta'}\mathbb Z_2^{m-1}
 $$
 which together with the previous result and simple calculations gives
 $$C(\mathbb T^{m+r}_\Theta)\rtimes_{\beta_\Theta}\mathbb Z_2^m\simeq Cl_2\otimes C(\mathbb T^{m+r}_{\Theta^{(1)}})\rtimes_{\beta_\Theta^{(1)}}\mathbb Z_2^{m-1},$$
 where $\Theta^{(1)}$ is as in Lemma \ref{torus},
  $\beta_\Theta'$ acts as $\beta_\Theta$ on $C(\mathbb{T}_\Theta^{1 + (m - 1 + r)})$ and identically on the generator of $\mathbb{Z}_2$, and $\beta_\Theta^{(1)}:\mathbb Z_2^{m-1}\to\text{Aut}(C(\mathbb T^{m+r}_{\Theta^{(1)}})$ is given by $\beta_\Theta^{(1)}(\omega)(u_i)=\omega_iu_i$ if $2\leq i\leq m$ and $\beta_\Theta^{(1)}(\omega)(u_1)=u_1$ for  $\omega=(\omega_2,\ldots,\omega_m)$. 
 The statement now follows by the successive application of the above argument.
\end{proof}

\begin{lemma}\label{k_0}
For $z\in Z_{\Theta}$ set 
 $m = |M_{\pi(z)}|$ and  $r = |R_{\pi(z)}|$. If $m + r > 1$ and $\Theta$ is irrational then
\[ K_0(\mathsf{CAR}_{\Theta}(z)) \simeq \mathbb{Z}^{2^{m + r - 1}}. \]
\end{lemma}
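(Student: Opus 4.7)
The plan is to reduce the computation of $K_0(\mathsf{CAR}_\Theta(z))$ to that of a noncommutative torus via the structural description of the fiber, and then invoke Rieffel's $K$-theoretic invariance (Proposition \ref{Rieff_K_theory}) together with the classical computation $K_0(C(\mathbb T^k))\simeq \mathbb Z^{2^{k-1}}$.

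First, I would use case (1) of the preceding description: since $m+r\geq 2$ (because $m+r>1$) and $\Theta$ is irrational (hence so is $\Theta_{M_x\sqcup R_x}$), we have
\[
\mathsf{CAR}_\Theta(z)\simeq Cl_{2l}\otimes \bigl(C(\mathbb T^{m+r}_{\Theta_{M_x\sqcup R_x}})\rtimes_{\beta_\Theta^x}\mathbb Z_2^m\bigr),
\]
where $l = |L_{\pi(z)}|=n-m-r$. Next, I would apply Lemma \ref{cross_lemma} to rewrite the crossed product as $Cl_{2m}\otimes C(\mathbb T^{m+r}_\Sigma)$ for the appropriate skew-symmetric matrix $\Sigma$. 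Using $Cl_{2k}\simeq M_{2^k}$ and combining the two Clifford factors, this gives
\[
\mathsf{CAR}_\Theta(z)\simeq M_{2^{l+m}}\otimes C(\mathbb T^{m+r}_\Sigma).
\]

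Then, by Morita invariance of $K_0$ (i.e.\ stability of $K$-theory under tensoring with matrix algebras), one has
\[
K_0(\mathsf{CAR}_\Theta(z))\simeq K_0\bigl(C(\mathbb T^{m+r}_\Sigma)\bigr).
\]
Since $C(\mathbb T^{m+r}_\Sigma)$ is a Rieffel deformation of the commutative torus $C(\mathbb T^{m+r})$, Proposition \ref{Rieff_K_theory} yields
\[
K_0\bigl(C(\mathbb T^{m+r}_\Sigma)\bigr)\simeq K_0\bigl(C(\mathbb T^{m+r})\bigr)\simeq \mathbb Z^{2^{m+r-1}},
\]
where the last isomorphism is the standard computation (e.g.\ via the Künneth theorem or iterated Pimsner--Voiculescu). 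Combining these isomorphisms gives the claim.

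The main step that requires care is the invocation of Lemma \ref{cross_lemma}: one must make sure the action $\beta_\Theta^x$ from Proposition \ref{isomorphism} matches the action $\beta_\Theta$ in Lemma \ref{cross_lemma} (it does, by construction, acting by $-1$ on exactly the generators indexed by $M_x$ and trivially on those indexed by $R_x$). Everything else is a direct application of Morita invariance and Rieffel's $K$-theoretic stability, so no real obstacle arises beyond bookkeeping of indices.
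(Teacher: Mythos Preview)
Your proposal is correct and follows essentially the same route as the paper: identify the fiber as $Cl_{2l}\otimes C(\mathbb T^{m+r}_{\Theta_{M_x\sqcup R_x}})\rtimes_{\beta_\Theta^x}\mathbb Z_2^m$, apply Lemma~\ref{cross_lemma} to rewrite this as $Cl_{2(l+m)}\otimes C(\mathbb T^{m+r}_\Sigma)$, and then use Morita invariance together with Proposition~\ref{Rieff_K_theory} and the classical computation of $K_0(C(\mathbb T^{m+r}))$. The paper's proof is just a terser version of the same argument.
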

\begin{proof}
If $m + r > 1$ then $\mathsf{CAR}_{\Theta}(z) \simeq Cl_{2l} \otimes C(\mathbb{T}^{m+r}_{\Theta_{M_x \sqcup R_x}}) \rtimes _{\beta_{\Theta}^x}\mathbb{Z}_2^m$ and by Lemma \ref{cross_lemma} 
$$\mathsf{CAR}_{\Theta}(z)\simeq Cl_{2l+2m}\otimes C(\mathbb T^{m+r}_\Sigma).$$ 
Thus, by Proposition \ref{Rieff_K_theory} 
$$K_0(\mathsf{CAR}_{\Theta}(z)) \simeq K_0(C(\mathbb{T}^{m+r}_{\Sigma})) \simeq K_0(C(\mathbb{T}^{m+r})) \simeq \mathbb{Z}^{2^{m + r - 1}}.$$ 
\end{proof}

\begin{lemma}
Let $\theta_1, \theta_2, \theta_3\in \mathbb R\setminus \mathbb Q$. The $C^*$-algebras
\[Cl_{2n-4} \otimes C(\mathbb{T}^2_{\theta_1}), \ Cl_{2n-4} \otimes C(\mathbb{T}^2_{\theta_2}) \rtimes_{\beta_1} \mathbb{Z}_2, \ Cl_{2n-4} \otimes C(\mathbb{T}^2_{\theta_3}) \rtimes_{(\beta_1\times\beta_2)} \mathbb{Z}_2^2 \]
are mutually non-isomorphic.
\end{lemma}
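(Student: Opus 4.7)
The plan is to reduce the three algebras to matrix algebras over two-dimensional noncommutative tori, via Lemma~\ref{cross_lemma}, and then distinguish them through the range of the unique tracial state on $K_0$.

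First, I would apply Lemma~\ref{cross_lemma} to the second and third algebras. For the second ($m=1$, $r=1$) the matrix $\Sigma$ satisfies $\Sigma_{1,2} = 2\theta_2$, giving $C(\mathbb{T}^2_{\theta_2}) \rtimes_{\beta_1} \mathbb{Z}_2 \simeq Cl_2 \otimes C(\mathbb{T}^2_{2\theta_2})$. For the third ($m=2$, $r=0$) we get $\Sigma_{1,2} = 4\theta_3$, so $C(\mathbb{T}^2_{\theta_3}) \rtimes_{\beta_1\times\beta_2} \mathbb{Z}_2^2 \simeq Cl_4 \otimes C(\mathbb{T}^2_{4\theta_3})$. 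Using the identification $Cl_{2k} \simeq M_{2^k}$, the three algebras are isomorphic to
\[
A_1 \simeq M_{2^{n-2}}(C(\mathbb{T}^2_{\theta_1})), \quad A_2 \simeq M_{2^{n-1}}(C(\mathbb{T}^2_{2\theta_2})), \quad A_3 \simeq M_{2^{n}}(C(\mathbb{T}^2_{4\theta_3})).
\]

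Since $\theta_1$, $2\theta_2$ and $4\theta_3$ are all irrational, each $A_i$ is a simple unital $C^*$-algebra with a unique tracial state $\tau_i$, so the range $\tau_{i*}(K_0(A_i)) \subseteq \mathbb{R}$ is an isomorphism invariant. Using Morita invariance together with the classical formula $\tau_*(K_0(C(\mathbb{T}^2_\theta))) = \mathbb{Z} + \theta \mathbb{Z}$, these ranges are
\[
\tfrac{1}{2^{n-2}}(\mathbb{Z} + \theta_1 \mathbb{Z}), \quad \tfrac{1}{2^{n-1}}\mathbb{Z} + \tfrac{\theta_2}{2^{n-2}}\mathbb{Z}, \quad \tfrac{1}{2^{n}}\mathbb{Z} + \tfrac{\theta_3}{2^{n-2}}\mathbb{Z}.
\]
Intersecting with $\mathbb{Q}$ and using irrationality of the $\theta_i$ yields the three subgroups $\tfrac{1}{2^{n-2}}\mathbb{Z}$, $\tfrac{1}{2^{n-1}}\mathbb{Z}$ and $\tfrac{1}{2^{n}}\mathbb{Z}$ of $\mathbb{Q}$, which are pairwise distinct. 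Therefore the three $C^*$-algebras are pairwise non-isomorphic.

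The main obstacle will be the bookkeeping in the first step — checking that Lemma~\ref{cross_lemma} does give precisely the stated $\Sigma$ for each of the two crossed products (in particular the factors of $2$ and $4$ coming from whether the indices lie in $M_x$ or $R_x$). Once the three algebras are written as matrix algebras over irrational rotation algebras of specific sizes $2^{n-2}, 2^{n-1}, 2^n$, the trace-range comparison is immediate from classical K-theory of noncommutative tori.
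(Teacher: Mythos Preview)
Your proposal is correct and follows essentially the same route as the paper: reduce via Lemma~\ref{cross_lemma} to $M_{2^{n-2}}(C(\mathbb{T}^2_{\theta_1}))$, $M_{2^{n-1}}(C(\mathbb{T}^2_{2\theta_2}))$, $M_{2^{n}}(C(\mathbb{T}^2_{4\theta_3}))$ and distinguish these by the range of the unique trace. The only cosmetic difference is that the paper records the trace range on projections (intersected with $[0,1]$) rather than on $K_0$, but the separating step---intersecting with $\mathbb{Q}$ to read off $\tfrac{1}{2^{n-2}}\mathbb{Z}$, $\tfrac{1}{2^{n-1}}\mathbb{Z}$, $\tfrac{1}{2^{n}}\mathbb{Z}$---is the same.
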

\begin{proof}
It is known that $C(\mathbb{T}^2_\theta)$ and $\otimes_{k\in S}M_{n(k)}\otimes C(\mathbb{T}^2_\theta)$  are  $C^*$-algebras with unique normalized trace which we denote by $\tr$. By a result of Rieffel (\cite[Theorem 1.2, Proposition 1.3]{rieff}), $\tr(\mathcal P(M_n\otimes C(\mathbb{T}^2_\theta))) = n^{-1}(\mathbb{Z} + \theta \mathbb{Z}) \cap [0,1]$, where $\mathcal P(A)$  is the set of projections of $A$. 
Therefore 
\[ \tr(\mathcal P(Cl_{2n-4} \otimes C(\mathbb{T}^2_{\theta_1}))) = \frac{1}{2^{n-2}}\tr(\mathcal P(C(\mathbb{T}^2_{\theta_1}))) = \frac{1}{2^{n-2}}(\mathbb{Z} + \theta_1 \mathbb{Z}) \cap [0,1], \] 
\[\begin{split} \tr(\mathcal P(Cl_{2n-4} \otimes C(\mathbb{T}^2_{\theta_2}) \rtimes_{\beta_1} \mathbb{Z}_2)) &\stackrel{\text{Lemma }\ref{cross_lemma}}{=}\tr(\mathcal P(Cl_{2n-2} \otimes C(\mathbb{T}^2_{2{\theta_2}}))\\&=   \frac{1}{2^{n-1}}(\mathbb{Z} + 2\theta_2 \mathbb{Z}) \cap [0,1]. \end{split}\]
and 
\[ \begin{split}\tr(\mathcal P(Cl_{2n-4} \otimes C(\mathbb{T}^2_{\theta_3}) \rtimes_{(\beta_1\times\beta_2)} \mathbb{Z}_2^2)) &\stackrel{\text{Lemma }\ref{cross_lemma}}{=}\tr(\mathcal P(Cl_{2n} \otimes C(\mathbb{T}^2_{4{\theta_3}})))\\&=   \frac{1}{2^{n}}(\mathbb{Z} + 4\theta_3 \mathbb{Z}) \cap [0,1]. \end{split}\] 
showing that the $C^*$-algebras $Cl_{2n-4} \otimes C(\mathbb{T}^2_{\theta_1})$, $Cl_{2n-4} \otimes (C(\mathbb{T}^2_{\theta_2}) \rtimes_{\beta_1} \mathbb{Z}_2)$ and $Cl_{2n-4} \otimes (C(\mathbb{T}^2_{\theta_3}) \rtimes_{(\beta_1\times\beta_2)} \mathbb{Z}_2^2)$ are mutually non-isomorphic. 
\end{proof}

\begin{lemma}\label{lemma81}
Let $\Theta_1$ and $\Theta_2$ be irrational skew-symmetric $n\times n$ matrices and 
let $\varphi : \mathsf{CAR}_{\Theta_1} \rightarrow \mathsf{CAR}_{\Theta_2}$ be an isomorphism with the induced homeomorphism $\alpha: Z_{\Theta_2}\to Z_{\Theta_1}$. If   $z \in Z_{\Theta_2}$ satisfies $\mathsf{face}(z) = (n - 2, 0, 2)$ then $\mathsf{face}(z) = \mathsf{face}(\alpha(z))$. 
\end{lemma}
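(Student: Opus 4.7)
The plan is to use the explicit description of the fibers of $\mathsf{CAR}_\Theta$ over $Z_\Theta$ listed right after Proposition \ref{isomorphism} to identify $\mathsf{CAR}_{\Theta_2}(z)$ up to isomorphism, record a handful of isomorphism invariants that transfer across $\varphi$, and then eliminate every possible face signature for $\alpha(z)$ other than $(n-2,0,2)$.

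First I would set $x=\pi(z)\in K_n$, so $|L_x|=n-2$, $|M_x|=0$, $|R_x|=2$. Since $m+r=2\geq 2$ and $\Theta_2$ is irrational, case~(1) of the classification applies with no crossed product factor, giving
\[
\mathsf{CAR}_{\Theta_2}(z)\;\simeq\; Cl_{2n-4}\otimes C(\mathbb T^2_\theta),\qquad \theta=(\Theta_2)_{i,j},\;R_x=\{i,j\},
\]
with $\theta\in\mathbb R\setminus\mathbb Q$. I would then note that this algebra is \textbf{(i)} unital and infinite-dimensional, \textbf{(ii)} not of type I (because $C(\mathbb T^2_\theta)$ is not type I for irrational $\theta$), and \textbf{(iii)} has $K_0\simeq\mathbb Z^2$ by Lemma~\ref{k_0}. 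Through the induced fiber isomorphism $\mathsf{CAR}_{\Theta_1}(\alpha(z))\simeq\mathsf{CAR}_{\Theta_2}(z)$ the same three properties must hold for $\mathsf{CAR}_{\Theta_1}(\alpha(z))$.

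Writing $(l',m',r')=\mathsf{face}(\alpha(z))$, I would then enumerate the four possible shapes. The case $l'=n$ gives the finite-dimensional $Cl_{2n}$, excluded by \textbf{(i)}. The cases $(n-1,1,0)$ and $(n-1,0,1)$ produce quotients of $Cl_{2n}\otimes C(\mathbb T)$ and $Cl_{2n-2}\otimes C(\mathbb T)$ respectively; both are matrix algebras over commutative $C^*$-algebras and hence of type I, excluded by \textbf{(ii)}. In the remaining case $m'+r'\geq 2$, Lemma~\ref{k_0} gives $K_0\simeq\mathbb Z^{2^{m'+r'-1}}$, so \textbf{(iii)} forces $m'+r'=2$ and $l'=n-2$. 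The three sub-cases $(n-2,0,2)$, $(n-2,1,1)$, $(n-2,2,0)$ produce fibers isomorphic, via Lemma~\ref{cross_lemma}, to $Cl_{2n-4}\otimes C(\mathbb T^2_{\theta'})$, $Cl_{2n-2}\otimes C(\mathbb T^2_{2\theta''})$, $Cl_{2n}\otimes C(\mathbb T^2_{4\theta'''})$ for irrational entries of $\Theta_1$, and the preceding trace-range lemma asserts these three are pairwise non-isomorphic. Only the first shape matches $\mathsf{CAR}_{\Theta_2}(z)$, so $\mathsf{face}(\alpha(z))=(n-2,0,2)$.

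The main obstacle is the type-I elimination step: one must be confident that every quotient of a $M_{2^k}(C(\mathbb T))$ remains type I, while $Cl_{2n-4}\otimes C(\mathbb T^2_\theta)$ is not type I, and this ultimately hinges on $\theta$ being irrational (inherited from the irrationality hypothesis on $\Theta_2$). Once that dichotomy is secured, the rest is bookkeeping that combines the $K_0$ computation of Lemma~\ref{k_0} with the trace-range non-isomorphism lemma immediately preceding the statement.
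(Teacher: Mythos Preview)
Your argument is correct and follows essentially the same outline as the paper: rule out $m'+r'\le 1$ by a coarse structural invariant of the fiber, then use $K_0$ (Lemma~\ref{k_0}) to force $m'+r'=2$, and finally invoke the trace-range lemma to distinguish the three remaining face signatures. The only cosmetic difference is that you separate the $m'+r'\le 1$ cases via the type~I/not type~I dichotomy, whereas the paper uses the simple/non-simple dichotomy (noting that $M_k(C(K))$ is either finite-dimensional or non-simple, while $Cl_{2l}\otimes C(\mathbb T^{m+r}_{\Theta_{M_x\sqcup R_x}})\rtimes\mathbb Z_2^m$ is infinite-dimensional and simple for irrational $\Theta$); both invariants do the job equally well here.
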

\begin{proof}

We observe first that if $z\in Z_\Theta$ is such that  $m=|M_{\pi(z)}|$ and $r=|R_{\pi(z)}|$ satisfy 
$m + r \leq 1$, then $z$ is either of type $(2)$, $(3)$ or $(4)$ and hence  $\mathsf{CAR}_{\Theta}(z)$ is a $C^*$-algebra of the form $M_n(C(X))$, which is either finite-dimensional or non-simple, while if  $m + r > 1$ then  $\mathsf{CAR}_{\Theta}(z)$ is infinite-dimensional and simple.   
From this we can conclude that if $\Theta_1$ and $\Theta_2$ are irrational then 
$|M_{\pi(\alpha(z))}|+|R_{\pi(\alpha(z))}|\leq 1$ whenever $|M_{\pi(z)}|+|R_{\pi(z)}|\leq 1$. 
Therefore if $\mathsf{face}(z) = (n - 2, 0, 2)$ then $|M_{\pi(\alpha(z))}|+|R_{\pi(\alpha(z))}|$ is necessarily larger than 1, and by Lemma \ref{k_0} must be exactly $2$. 
This gives that the possible values of $\mathsf{face}(\alpha(z))$ are $(n-2,0,2)$, $(n-2,1,1)$ and $(n-2,2,0)$. Hence, as
$\mathsf{CAR}_{\Theta_1}(\alpha(z))\simeq \mathsf{CAR}_{\Theta_2}(z)$, to prove the statement it is enough to see that $\mathsf{CAR}_{\Theta}(z)$ are non-isomorphic for different $z$ with $(m,r)\in \{(0, 2), (1, 1), (2, 0)\}$. But for $(m, r) = (0, 2), (1, 1), (2, 0)$, $\mathsf{CAR}_{\Theta}(z)$ is isomorphic to  $Cl_{2n-4} \otimes C(\mathbb{T}^2_\theta)$, $Cl_{2n-4} \otimes C(\mathbb{T}^2_\theta) \rtimes_{\beta_1} \mathbb{Z}_2$ and $Cl_{2n-4} \otimes C(\mathbb{T}^2_\theta) \rtimes_{(\beta_1\times\beta_2)} \mathbb{Z}_2^2$ respectively. Thus Lemma 8.5 concludes the proof.


\end{proof}

A matrix $P=(p_{i,j})_{i,j=1}^n\in M_n$ is called a signed permutation matrix if there exists $(\sigma, b)\in S_n\times\{0,1\}^n$ such that $p_{i,j}=(-1)^{b_i}\delta_{j,\sigma(i)}$.
We are now ready to prove our main results.

\begin{theorem}\label{clas_tori}
Let $\Theta_1$ and $\Theta_2$ be irrational $n\times n$-matrices. 
\begin{enumerate}
    \item If $P$ is a signed permutation matrix then $\Theta_1 = P \Theta_2 P^t$ implies $\mathsf{CAR}_{\Theta_1} \simeq \mathsf{CAR}_{\Theta_2}$.
    \item If $\mathsf{CAR}_{\Theta_1} \simeq \mathsf{CAR}_{\Theta_2}$ then $(\Theta_2)_{i,j} = \pm (\Theta_1)_{\sigma(i,j)} \mod \mathbb{Z}$ for a bijection $\sigma$ of the set $\{(i,j) : i < j, \ i, j = 1, \ldots, n\}$.
\end{enumerate}

\end{theorem}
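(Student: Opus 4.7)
\emph{Part (1).} The plan is to construct an explicit $*$-isomorphism from the universal presentation. Write the signed permutation as $P = D_\sigma$ with $\sigma\in S_n$ and signs $b=(b_i)\in\{0,1\}^n$, so that $(P\Theta_2 P^t)_{i,j}=(-1)^{b_i+b_j}(\Theta_2)_{\sigma(i),\sigma(j)}$. Define $\phi:\mathsf{CAR}_{\Theta_1}\to\mathsf{CAR}_{\Theta_2}$ on generators by $\phi(a_i)=a_{\sigma(i)}$ when $b_i=0$ and $\phi(a_i)=a_{\sigma(i)}^*$ when $b_i=1$. The CAR relation $a^*a+aa^*=1$ is symmetric in $a,a^*$, so it is preserved. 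For the commutation relations, each time a generator is replaced by its adjoint the exponent in the corresponding relation flips sign (using $a_ia_j^* = e^{2\pi i\Theta_{i,j}}a_j^*a_i$ and $a_i^*a_j^* = e^{-2\pi i\Theta_{i,j}}a_j^*a_i^*$), so the total sign picked up is $(-1)^{b_i+b_j}$, exactly matching $P\Theta_2 P^t$. The same construction with $P^{-1}$ produces the inverse.

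\emph{Part (2), reducing to fibres.} Let $\varphi:\mathsf{CAR}_{\Theta_1}\to\mathsf{CAR}_{\Theta_2}$ be an isomorphism and let $\alpha:Z_{\Theta_2}\to Z_{\Theta_1}$ be the induced homeomorphism of central spectra, with fibrewise isomorphisms $\mathsf{CAR}_{\Theta_1}(\alpha(z))\simeq\mathsf{CAR}_{\Theta_2}(z)$ as described at the start of the section. By Lemma \ref{lemma81}, $\alpha$ preserves the face signature $(n-2,0,2)$. The points $z\in Z_{\Theta_i}$ with this face signature are in natural bijection with unordered pairs $\{i,j\}\subset\{1,\ldots,n\}$: such a $z$ projects onto the unique $x\in K_n$ with $R_x=\{i,j\}$ and $L_x$ the complement, and since $|M_x|+|R_x|\ge 2$ the centre of the fibre is trivial (by irrationality of $\Theta_i$), so $z$ is uniquely determined by $x$. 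Thus $\alpha$ restricts to a bijection $\sigma$ of $\{(i,j):i<j\}$.

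\emph{Extracting the entries.} Over such a face point with $R_x=\{i,j\}$, Proposition \ref{isomorphism} identifies the fibre with $Cl_{2(n-2)}\otimes C(\mathbb{T}^2_{\Theta_{i,j}}) \simeq M_{2^{n-2}}(C(\mathbb{T}^2_{\Theta_{i,j}}))$. Since $\Theta_1,\Theta_2$ are irrational, this fibre is simple with a unique normalised trace, so the range of the trace on projections is a $C^*$-isomorphism invariant. By Rieffel's theorem this range equals $\tfrac{1}{2^{n-2}}(\mathbb{Z}+\Theta_{i,j}\mathbb{Z})\cap[0,1]$, and the identification of fibres forces $\mathbb{Z}+(\Theta_2)_{i,j}\mathbb{Z}=\mathbb{Z}+(\Theta_1)_{\sigma(i,j)}\mathbb{Z}$ as subgroups of $\mathbb{R}$, which for irrationals is equivalent to $(\Theta_2)_{i,j}\equiv\pm(\Theta_1)_{\sigma(i,j)}\mod\mathbb{Z}$.

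\emph{Main obstacle.} The conceptual heart of the proof has already been distilled into Lemma \ref{lemma81}: one must isolate a distinguished family of fibres, each encoding a single off-diagonal entry of $\Theta$, and show that the $K_0$ and tracial invariants are strong enough to distinguish the three a priori possible face signatures $(n-2,m,r)$ with $m+r=2$ (this is where the computation of the three tracial ranges $\tfrac{1}{2^{n-2}}(\mathbb Z+\theta_1\mathbb Z)$, $\tfrac{1}{2^{n-1}}(\mathbb Z+2\theta_2\mathbb Z)$, $\tfrac{1}{2^{n}}(\mathbb Z+4\theta_3\mathbb Z)$ enters). Once that is in place, part (2) is a book-keeping combination of Proposition \ref{isomorphism} with the classical tracial-range invariant for irrational rotation algebras, and part (1) is just a direct verification with the universal presentation.
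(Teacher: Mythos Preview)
Your proposal is correct and follows essentially the same route as the paper: Part~(1) via the explicit $*$-isomorphism $a_i\mapsto a_{\sigma(i)}^{b_i}$ on generators, and Part~(2) by invoking Lemma~\ref{lemma81} to see that $\alpha$ preserves the face signature $(n-2,0,2)$, identifying the corresponding fibres via Proposition~\ref{isomorphism} as $Cl_{2n-4}\otimes C(\mathbb{T}^2_{\Theta_{i,j}})$, and reading off the entry from the isomorphism class of the fibre. The only cosmetic difference is that the paper cites Rieffel's classification theorem for $A_\theta$ directly, whereas you spell out the tracial-range computation $\tfrac{1}{2^{n-2}}(\mathbb{Z}+\Theta_{i,j}\mathbb{Z})\cap[0,1]$; this is exactly what underlies Rieffel's result and is in any case needed to pass from $M_{2^{n-2}}(A_\theta)$ back to $A_\theta$.
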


\begin{proof}
(1) If $P$ is a signed permutation matrix which corresponds to a signed permutation $(\sigma, b) \in S_n \times \{1, *\}^n$ then the corresponding isomorphism is given by
\[ \psi_P(a_i) = a_{\sigma(i)}^{b_i}. \]

(2) Let $z$ be the unique element of $Z_{\Theta_2}$ such that $\pi(z) = \frac{1}{2}(\delta_i + \delta_j)$, $i< j$. Since $\mathsf{face}(z) = (n - 2, 0, 2)$, by Lemma \ref{lemma81} $\mathsf{face}(\alpha(z)) = (n - 2, 0 , 2)$ and hence  $\alpha(z) = \frac{1}{2}(\delta_k + \delta_l)$,  where $(k, l) = \sigma(i, j)$ for  a bijection $\sigma$ of the set $\{(i,j) : i < j, \ i, j = 1, \ldots, n\}$. Thus 
\begin{eqnarray*}
\mathsf{CAR}_{\Theta_2}(z)& \simeq& Cl_{2n-4} \otimes C(\mathbb{T}^2_{(\Theta_2)_{i,j}})\simeq \mathsf{CAR}_{\Theta_1}(\alpha(z))\\& \simeq& Cl_{2n-4} \otimes C(\mathbb{T}^2_{(\Theta_1)_{\sigma(i,j)}}).
\end{eqnarray*}
 and by \cite[Theorem 3]{rieff} $(\Theta_2)_{i,j} = \pm (\Theta_1)_{\sigma(i, j)} \mod \mathbb{Z}$. 
\end{proof}

For $\theta\in \mathbb R$ write simply $\mathsf{CAR}_{\theta}$ for $\mathsf{CAR}_{\Theta}$ if $n=2$ and $\Theta_{1,2}=\theta$. In this case we have the full classification similar to the classification of two-dimensional non-commutative tori. 

\begin{corollary}\label{clas_cor}
If  $\theta_1$, $ \theta_2$ are irrational numbers then $\mathsf{CAR}_{\theta_1} \simeq \mathsf{CAR}_{\theta_2}$ iff $\theta_1 = \pm \theta_2 \mod \mathbb{Z}$.
\end{corollary}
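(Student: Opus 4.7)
The plan is to read off the corollary directly from Theorem \ref{clas_tori}, specialised to $n=2$, after observing that for $n=2$ the index set $\{(i,j):i<j,\ i,j=1,2\}$ is the singleton $\{(1,2)\}$, so any bijection $\sigma$ of it is the identity.

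For the forward implication, suppose $\mathsf{CAR}_{\theta_1}\simeq \mathsf{CAR}_{\theta_2}$. Viewing $\theta_i$ as the $(1,2)$-entry of the $2\times 2$ skew-symmetric matrix $\Theta_i$, which is automatically irrational in the sense defined before Lemma \ref{torus} (since $\theta_i\notin\mathbb{Q}$), Theorem \ref{clas_tori}(2) gives a bijection $\sigma$ of $\{(1,2)\}$ such that $(\Theta_2)_{1,2}=\pm(\Theta_1)_{\sigma(1,2)}\bmod\mathbb{Z}$. The only such $\sigma$ is the identity, yielding $\theta_2=\pm\theta_1\bmod\mathbb{Z}$.

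For the reverse implication, assume $\theta_1=\pm\theta_2\bmod\mathbb{Z}$. If $\theta_1=\theta_2\bmod\mathbb{Z}$, the defining relations of the two algebras involve $\theta_i$ only through $e^{\pm 2\pi i\theta_i}$, so the two universal $C^*$-algebras coincide tautologically. If $\theta_1=-\theta_2\bmod\mathbb{Z}$, apply Theorem \ref{clas_tori}(1) with the signed permutation matrix $P=\mathrm{diag}(1,-1)$ (or equivalently $\mathrm{diag}(-1,1)$), for which $P\Theta_2 P^{t}=-\Theta_2$, hence $(P\Theta_2 P^t)_{1,2}=-\theta_2\equiv\theta_1\bmod\mathbb{Z}$; concretely the isomorphism $\psi_P$ supplied in the proof of Theorem \ref{clas_tori}(1) sends $a_1\mapsto a_1$, $a_2\mapsto a_2^*$.

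There is no real obstacle: both directions are immediate specialisations of the already-proved Theorem \ref{clas_tori}, the only subtlety being the tiny bookkeeping that $n=2$ collapses the bijection $\sigma$ and that part (1) of that theorem indeed covers the sign flip on $\theta$.
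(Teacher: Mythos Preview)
Your proof is correct and follows exactly the approach the paper intends: the corollary is stated without proof immediately after Theorem \ref{clas_tori}, and your argument is precisely the specialisation of that theorem to $n=2$, with the observation that the singleton index set forces $\sigma=\mathrm{id}$ and that the defining relations depend on $\theta$ only through $e^{2\pi i\theta}$. There is nothing to add.
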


\medskip 

{\bf Acknowledgement. } It is a pleasure to express our gratitude to Daniil Proskurin and Magnus Goffeng for helpful discussions. We would also like to thank the anonymous referee for
valuable suggestions that led to improvements in the paper.

 \end{document}